\theoremstyle{plain}
\newtheorem{theorem}{Theorem}[section]
\newtheorem{claim}[theorem]{Claim}
\newtheorem{lemma}[theorem]{Lemma}
\newtheorem{cor}[theorem]{Corollary}
\newtheorem{pro}[theorem]{Proposition}
\numberwithin{equation}{section}
\long\def\begcom#1\endcom{}
\newcommand{\diam}{\operatorname{diam}}
\newcommand{\length}{\operatorname{\length}}
\newcommand{\qihao}{\fontsize{5pt}{\baselineskip}\selectfont}
\def\p{\prime}
\def\cP{\operatorname{\mathcal P}}
\def\cB{\operatorname{\mathcal B}}
\def\cM{\operatorname{\mathcal M}}
\def\cW{\operatorname{\mathcal W}}
\def\S{\operatorname{\mathbb S}}
\def\N{\operatorname{\mathbb N}}
\def\R{\operatorname{\mathbb R}}
\def\Ga{\operatorname{\Gamma}}
\def\ga{\operatorname{\gamma}}
\def\alp{\operatorname{\alpha}}
\def\Om{\operatorname{\Omega}}
\def\eps{\operatorname{\epsilon}}
\def\supp{\operatorname{supp}}
\def\length{\operatorname{length}}
\def\ln{\operatorname{ln}}
\def\top{\operatorname{top}}
\def\dim{\operatorname{dim}}
\def\Jac{\operatorname{Jac}}
\def\vep{\operatorname{\varepsilon}}
\begin{document}

\title[Continuity properties of folding entropy]{Continuity properties of folding entropy}

\author[G. Liao]{Gang Liao}
\address{G. Liao: School of Mathematical Sciences,  Center for Dynamical Systems and Differential Equations, Soochow University,
Suzhou 215006, China} \email{lg@suda.edu.cn}

\author[S. Wang]{Shirou Wang}
\address{S. Wang: Department of Mathematical \& Statistical Sciences,
University of Alberta, Edmonton, Alberta, Canada T6G 2G1}
\email{shirou@ualberta.ca}

\subjclass[2000]{Primary {37A60}; Secondary { 37A35, 37C40,  82C05}}

\keywords{Folding entropy, upper semi-continuity,   degenerate rate, metric entropy, dimension.}

\maketitle

\begin{abstract} 
The {\it folding entropy} is a quantity 
originally proposed by Ruelle in 1996  during the study of entropy production in the non-equilibrium statistical mechanics  \cite{Ruelle96}.  
As derived through a limiting process to the non-equilibrium steady state, the continuity of 
entropy production plays a key role in its physical interpretations.  
In this paper, we study the continuity  of folding entropy for a general  (non-invertible) differentiable dynamical system with degeneracy. By 
introducing a notion called {\it degenerate rate}, 
we prove that on any subset of measures with uniform degenerate rate, the folding  entropy, and hence the entropy production, is upper semi-continuous.  This extends the  upper semi-continuity result in \cite{Ruelle96} from endomorphisms to all $C^r(r>1)$ maps.  

We further apply to the one-dimensional setting. In achieving this, an equality between the folding entropy and (Kolmogorov-Sinai) metric entropy, as well as a general dimension formula are established. These admit their own interests. The upper semi-continuity of  metric entropy and dimension are then valid 
when measures with uniform degenerate rate are considered. 
Moreover, the sharpness of uniform degenerate rate is also investigated by examples in the scope of positive metric (or folding) entropy. 
\end{abstract}

\section{Introduction}

In the study of non-equilibrium statistical mechanics, 
the  statistical mechanical entropy is exhibited to be  persistently  pumped out of systems during  time evolutions  due to the energy or heat exchange with the environment. The numerical experiments in practice indicate   that 
the entropy production is non-negative  and  usually  positive in accordance with the second law of thermodynamics \cite{ECM90, ECM, GC}, and   such  phenomenon  in the mathematical representations as well has been  effectively  discussed  and justified to certain extent in the  stochastic process theory \cite{ JQQ,  KhinBook, QQG, Seif2012, ZQQ}.

As a fundamental approach to interpret the thermodynamics,  the  entropy production theory  was  especially  developed in the  framework of dynamical systems \cite{G2004, GC, GR97, Ruelle96, Ruelle99}.
In \cite{Ruelle96},  Ruelle investigated the entropy production in the standard dynamical system $(M, f, \mu)$ where $M$ is a compact Riemannian manifold, the evolution $f:M\to M$  is either a diffeomorphism or a non-invertible differentiable map,  and  $\mu$ is the non-equilibrium steady state which 
 is normally  thought as an 
SRB (Sinai-Ruelle-Bowen)  measure \cite{GR97, Ruelle99}.  The entropy production with respect to  $\mu$, denoted as $e_f(\mu)$, was  illustrated  through a limiting process in which a quantity called {\it folding entropy} naturally arises.
Adopting the Shannon-type expression $S(\bar\rho)=-\int\rho(x)\log\rho(x)dx$ as the statistical mechanical entropy  for a probability measure $\bar\rho$ with density $\rho,$ the  entropy pumped out of the system (to keep the  energy fixed due to the non-conservative forces acting on the system) within one-time  step is
\begin{eqnarray*}
e_f(\bar\rho)=-[S( f\bar\rho)-S(\bar\rho)]:=F_f(\bar\rho)-\int_M\log|\Jac(D_xf)|\rho(x)dx,
\end{eqnarray*}
where $f\bar\rho$ denotes the image of $\bar\rho$ under $f$.
The emerging term $F_f(\bar\rho),$  
called the folding entropy of $\bar\rho$,   exactly expresses the complexities  as $f$ ``folds"  the different states, according to their $\rho$-weights (or -masses),  into a same one.
By assuming  a general state 
$\mu$ as the  limit of a sequence of absolutely continuous measures with probability  densities $\{\rho_n\}_{n\ge1},$ the entropy production with respect to $\mu$
is intuitively given  by
\begin{eqnarray*}
e_f(\mu)=F_f(\mu)-\int_M\log|\Jac(D_xf)|d\mu.
\end{eqnarray*}

Physically considering $\mu$ as an idealization of $\bar\rho_n$ when $n$ large enough, a natural question is what is the relationship between $e_f(\mu)$ and the limiting quantity (if exists) $\lim_{n\to\infty}e_f(\bar\rho_n)$?  
Under some non-degenerate assumptions that essentially requires $f$ to be endomorphism-type, 
 Ruelle \cite{Ruelle96} showed that the folding entropy is upper semi-continuous and hence\footnote{Under the endomorphism-type assumption in \cite{Ruelle96}, $|\Jac(D_xf)|$ is uniformly away from $0$. Hence,  the term $\int\log|\Jac(D_xf)|d\mu$ is continuous with respect to $\mu.$}
\begin{eqnarray}\label{lim_proc}
e_f(\mu)\ge \limsup\nolimits_{n\to\infty}e_f(\bar\rho_n).
\end{eqnarray}
We remark that when $f$ is a diffeomorphism, the folding entropy is always zero (since no space-folding any more), and the entropy production 
is simply characterized by the phase volume contraction $-\int\log|\Jac(D_xf)|d\mu.$ This relation was earlier  
pointed in the discussion of more concrete non-equilibrium molecular dynamics models \cite{Evans85, ECM90,HP87}
and theoretically revealed 
in \cite{Andrey, GC}.
In this situation, the equality in the limiting process \eqref{lim_proc} naturally holds due to the continuity of the function $\log|\Jac(D_xf)|$.

By the  physical motivations to general situations,  the results in \cite{Ruelle96} were conjecturally extended and  a unified presentation was suggested.  The further progress actually depends on the study in the ergodic theory of differentiable dynamical systems. For a general  (non-invertible) system beyond  endomorphism,    the analysis difficulty of  entropy production  is  in the handling of the possible accumulation of ``foldings" due to the degeneracy (even with zero measure) in the approximation process. 
In this paper, we assume, standardly,  that $f$ is a $C^{r} (r>1) $ map  on a compact Riemannian manifold  $M$. Our main goal is to explore the  mechanism 
for the upper semi-continuity of folding  entropy. 
By introducing a notion called  {\it degenerate rate}  which captures the complexity arising near  the degenerate set $\Sigma_f:=\{x\in M: \Jac(D_xf)=0\}$,  we shall  establish the upper semi-continuity of folding entropy  
on any set of measures with uniform degenerate rate. This thus justifies \eqref{lim_proc}
in the limiting process.

\subsection{Folding entropy and degenerate rate} 
We begin with the precise definition of  folding entropy. Denote by $\cP(M)$ the set of all Borel probability measures on $M$. 
For $\mu\in\cP(M),$ 
the folding entropy $F_f(\mu)$ is a
{\it conditional entropy} measuring the   complexities of the $\mu$-weighted preimages of $f.$
More specifically, denote $f^{-1}\epsilon =\big\{\{f^{-1}x\}\big\}_{x\in M}$ as the preimage partition of $f$ with 
$\epsilon:=\{\{x\}\}_{x\in M}$ being the partition  into single points.  
Then the {\it folding entropy} of $f$ with respect to $\mu$    is the conditional entropy of $\epsilon$ relative to $f^{-1}\epsilon$:
\begin{eqnarray*}
F_f(\mu)=H_\mu(\epsilon|f^{-1}\eps)=\displaystyle\int_M H_{\tilde{\mu}_x}(\epsilon)d( f\mu),
\end{eqnarray*}
where  $\{\tilde{\mu}_{x}\}$ is the family of conditional measure of $\mu$ disintegrated along the preimage sets\footnote{Readers may refer to \cite{Rok} for rigorous mathematical definitions of the conditional measure and entropy.} $\{f^{-1}x\}$.

Despite for its seemingly abstract form, the folding entropy is in practice rather intuitive; see Figure \ref{fig:folding}(A).
We  see that if 
there are only finitely many preimage branches, say $N,$ then the folding entropy is always bounded by $\log N$ since $H(\tilde{\mu}_{x})\le \log N$, and  the equality is achieved if all the $N$ preimages are equally $\mu$-weighted. 
In particular, if $f$ is a diffeomorphism, then $N=1$ and hence $F_f(\mu)=0$. A simple but illuminating example  is to consider $M=\S^1$ and $f: x\mapsto Nx$ (mod 1) for $N\in\mathbb{N}.$ 
In this example, since $f$ ``evenly" expands the state space $\S^1$, the folding entropy with respect to the Lebesgue measure  (which is also an SRB measure) is $F_f(\mathcal{L}eb)=\log N.$
Generalizing a bit more  in the situation  of two preimage pieces,  consider $f(x)=px$ for $x\in[0,1/p)$ and $f(x)=\frac{p}{p-1}(1-x)$ for $x\in[1/p,1)$,  where $1<p\neq 2$.  
The subintervals  $[0,1/p)$ and $[1/p,1)$ are stretched by $f$ in different scales. 
Still, the Lebesgue measure is an SRB measure, however the folding entropy 
$F_f(\mathcal{L}eb)=-\frac{1}{p}\log\frac{1}{p}-\frac{p-1}{p}\log\frac{p-1}{p}<\log2.$

As a consequence of physically motivated and mathematically intuitive role played by  the folding entropy,  a folding-type  entropy inequality  on the backward process was simultaneously revealed  in   \cite{Ruelle96}  in contrast with the forward evolution.
To be specific, for a $C^r(r>1)$ map $f$ on a compact Riemannian manifold  $M$ and an $f$-invariant measure $\mu,$  
the folding-type Ruelle inequality reads as 
\begin{eqnarray}\label{folding_Ruelle_ineq}
h_\mu(f)\le F_f(\mu)-\int_M\sum_{\lambda_i(f,x)<0}\lambda_i(f,x)d\mu,
\end{eqnarray}
where $\lambda_i(f,x)$ denote the Lyapunov exponents,  existing for $\mu$-a.e. $x\in M$ by the Oseledec theorem \cite{O68}.  The inequality \eqref{folding_Ruelle_ineq}   was conjectured in  \cite{Ruelle96}  providing 
an approach to the non-negativity of entropy production and  was mathematically rigorously proved  by Liu \cite{Liu03} for $C^r(r>1)$ maps under some polynomial-like degeneracy  conditions, and  then by the authors  \cite{LW}   for all $C^r(r>1)$ maps.  
We would like to mention  that in the classical Margulis-Ruelle inequality \cite{Ruelle78},   the RHS  is simply the sum of positive Lyapunov exponents \cite{Ruelle78}. With respect to  the backward process, however, since one trajectory may split into many branches, 
the folding entropy
 appears as a characterization of  this ``global" expansion in terms of branches, while the ``local" expansions inside each branch are captured by the Lyapunov exponents.

\begin{figure}[ht] 
\includegraphics[height=4.5cm]{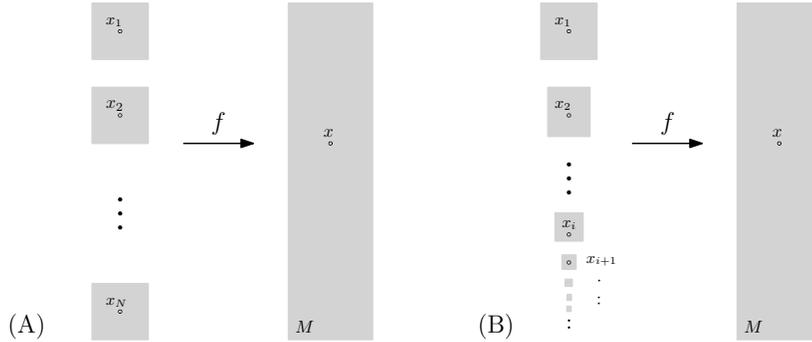}
\caption{(A) There are $N$ branches containing  the $N$ preimages, $x_1,\cdots, x_N,$  of a typical $x\in M,$ i.e., $f(x_1)=\cdots=f(x_N)=x.$ 
\ (B) There are countably many branches containing the preimages, $x_1,\cdots, x_i, \cdots,$ of a typical $x\in M.$ The accumulation happens near the degenerate set. 
}
\label{fig:folding}
\end{figure}

In \cite{Ruelle96},  it was shown  that under the ``endomorphism-type" assumptions\footnote{ In 
\cite{Ruelle96}, it was assumed that 
the state space (excluding the degenerate set) $M\backslash\Sigma_f$ can be divided into several pieces which have either  identical or disjoint images and  the Jacobian are uniformly away from zero. 
This rules out the possibility of 
accumulations of infinite ``foldings" occurring near the degenerate set during the approximation process.},  the folding entropy varies in an upper semi-continuous way. In this situation, a single  coarse-grained partition alone can exhaust all the complexities in the folding process. However, this does not work in general,  especially when arbitrarily many preimage branches accumulate around the degenerate set; see Figure \ref{fig:folding}(B).
A   general influence mechanism 
by the degeneracy for systems beyond  endomorphisms remains unknown yet.  The main goal of this paper is to develop a such mechanism.
To achieve this, we introduce a notion called {\it degenerate rate} to capture the emerging 
 complexities in the refining coarse-graining  process. 
Take  firstly  a decreasing sequence of 
open neighborhoods of $\Sigma_f,$ denoted by $\mathcal{V}=\{V_m\}_{m\ge1},$ such that  
\begin{eqnarray}\label{open_seq}
d_H(V_m, \Sigma_f)\to 0,\quad m\to\infty,
\end{eqnarray}
 where $d_H$ denotes the Hausdorff distance. Let $\bar\eta=\{\eta_{m}\}_{m\ge1}$ be any sequence of positive real numbers 
approaching to zero. 
  Given $\mu\in\cP(M),$ $f$ is said to  admit \emph{degenerate rate} $\bar\eta$  with respect to $\mu$ if
\begin{eqnarray*}\label{def:uniform-degenerate-rate}
\Big{|}\int_{V_m}\log|\Jac(D_xf)| d\mu\Big{|}\leq\eta_{m}, \quad 
\forall m\ge1. 
\end{eqnarray*}
In the definition,   $\bar\eta$ actually  characterizes the level of complexities  arising near
$\Sigma_f$,  i.e.,  how  $|\Jac(D_xf)|$ diminishes on the shrinking domains around $\Sigma_f$. 
It is well-defined for
any $\mu\in\cP(M)$ satisfying the integrable condition\footnote{Observe that \eqref{finite LE} implies $\mu(\Sigma_f)=0$. Hence, the sequence $\{\eta_m:=|\int_{V_m}\log|\Jac(D_xf)| d\mu|\}_{m\ge1}$  approaching to zero is just right.} 
\begin{eqnarray}\label{finite LE}
\Big|\int\log|\Jac(D_xf)|d\mu\Big|<\infty.
\end{eqnarray}
We will  focus on the probability measures
with a {\it uniform} degenerate rate. Define 
\begin{eqnarray*}
\cP_{\bar \eta}(f):=\Big\{\nu\in\cP(M):\Big{|}\int_{V_m}\log|\Jac(D_xf)| d\mu\Big{|}\leq\eta_{m},\,\,\forall m\ge1\Big\},
\end{eqnarray*}
i.e., $\cP_{\bar \eta}(f)$ collects all the probability  measures with the degenerate rate $\bar\eta$. Note that any $\mu\in\cP_{\bar\eta}(f)$, $\mu(\Sigma_f)=0$  holds automatically.
 Before proceeding to the theorems, we give two remarks about 
$\cP_{\bar\eta}(f)$:

\medskip 

\noindent (i)
$\cP_{\bar\eta}(f)$ is a closed subset of $\cP(M)$ in the weak$^*$-topology. To see this, let $\{\mu_i\}_{i\ge1}$ be a sequence of measures in $\cP_{\bar \eta}(f)$  satisfying  $\lim_{i\to\infty}\mu_i=\mu$.
Without loss of generality, we assume that $|\Jac(D_xf)|\big|_{V_m}<1$ for all $m\ge1.$ Then for any $s>m$,  $|\int_{V_m\setminus V_s}\log|\Jac(D_xf)| d\mu_i|\le \eta_m.$ Hence,  \[\Big{|}\int_{V_m\setminus V_s}\log|\Jac(D_xf)| d\mu\Big{|}=\lim_{i\to \infty} \Big{|}\int_{V_m\setminus V_s}\log|\Jac(D_xf)|d\mu_i\Big{|}\le \eta_m.\]
This, together with $\mu(\Sigma_f)=0,$ yields 
\[\Big{|}\int_{V_m}\log|\Jac(D_xf)| d\mu\Big{|}\le \eta_m, \quad\forall m\ge1. \]
That is, $\mu\in\cP_{\bar\eta}(f).$

\medskip 

\noindent (ii) The uniform  degenerate rate is independent on the choice of $\mathcal{V}=\{V_m\}_{m\ge1}$.  Actually,  for any two sequences  of neighborhoods of $\Sigma_f,$ $\mathcal{V}^{(i)}=\{V_m^{(i)}\}_{m\ge1}$ $(i=1,2)$ satisfying \eqref{open_seq},  the associated tails
\[\Big\{\Big|\int_{V_m^{(i)}}\log|\Jac(D_xf)| d\mu\Big| \Big\}_{m\ge1}\quad (i=1,2),\]  are uniformly equivalent,  i.e.,   for any $\gamma>0$ and $m_1\ge1,$  there exists $m_2>0$ such that 
\[\Big|\int_{V_{m}^{(2)}}\log|\Jac(D_xf)| d\mu\Big|<\Big|\int_{V_{m_1}^{(1)}}\log|\Jac(D_xf)| d\mu\Big|+\gamma,\quad\forall m\ge m_2,\]  and vice versa.
Henceforth, we fix a  decreasing sequence $\mathcal{V}=\{V_m\}_{m\ge1}$ satisfying \eqref{open_seq}.  By the variation of the sequence $\bar\eta=\{\eta_{m}\}_{m\ge1}$  with zero limit,  all the settings of uniform degenerate rate will then be exhausted. 

\medskip

For a general  $C^{r}(r>1)$ system,   the upper semi-continuity of folding entropy  is   established  when measures with uniform degenerate rates are considered. 
\begin{theorem}\label{thm1}
Let $f$ be a $C^{r}(r>1)$ map  on a compact Riemannian manifold   $M.$  Then  the folding  entropy 
\begin{eqnarray*}
\mu\mapsto F_f(\mu)
\end{eqnarray*}
is upper semi-continuous  on $\cP_{\bar\eta}(f).$
\end{theorem}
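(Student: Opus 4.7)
The plan is to approximate $F_f(\nu)$ by $H_\nu(\xi_m\mid f^{-1}\epsilon)$ for finite partitions $\xi_m$ adapted to the shrinking neighborhoods $V_m$ of $\Sigma_f$, to control the truncation error uniformly over $\cP_{\bar\eta}(f)$ via the degenerate rate, and to exploit upper semi-continuity of the finite-partition conditional entropy. Fix $\mu\in\cP_{\bar\eta}(f)$ with $\mu_n\in\cP_{\bar\eta}(f)$ converging weak${}^*$ to $\mu$. Since $f$ is a $C^r$ local diffeomorphism on $M\setminus\Sigma_f$, for each $m$ compactness provides a uniform injectivity radius $\delta_m>0$ on $M\setminus V_m$. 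I would choose a finite Borel partition $\xi_m$ refining $\{V_m,M\setminus V_m\}$ with $\mu(\partial\xi_m)=0$ and atoms in $M\setminus V_m$ of diameter less than $\delta_m$, so that each such atom meets any fiber $f^{-1}(y)$ in at most one point.

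By the chain rule,
\[F_f(\nu)=H_\nu(\epsilon\mid f^{-1}\epsilon)\leq H_\nu(\xi_m\mid f^{-1}\epsilon)+H_\nu(\epsilon\mid f^{-1}\epsilon\vee\xi_m).\]
Expanding the second term via disintegration and using the separation property outside $V_m$ yields
\[H_\nu(\epsilon\mid f^{-1}\epsilon\vee\xi_m)=\sum_{A\in\xi_m,\,A\subset V_m}F_{f|_A}(\nu|_A),\]
a sum of ``local folding entropies'' supported on $V_m$. The core technical step, for which the volume-distortion / Yomdin-type techniques behind the folding-type Ruelle inequality in~\cite{LW} must be reworked to act uniformly on $\cP_{\bar\eta}(f)$, is a bound of the form
\[\sum_{A\subset V_m}F_{f|_A}(\nu|_A)\leq C\Big|\int_{V_m}\log|\Jac(D_xf)|\,d\nu\Big|+\rho_m\leq C\eta_m+\rho_m,\]
valid for every $\nu\in\cP_{\bar\eta}(f)$, where $C=C(M,f)$ and $\rho_m\to 0$. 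This uniformity is what the degenerate-rate hypothesis is tailored to deliver (and it also forces $\sup_{\nu\in\cP_{\bar\eta}(f)}\nu(V_m)\to 0$ via $\inf_{V_m}\bigl|\log|\Jac|\bigr|\to\infty$, which absorbs the dimension-type constants). I expect this uniform local bound to be the main obstacle, since the number and mass of preimage branches accumulating inside $V_m$ depend sensitively on how $\nu$ distributes its mass near $\Sigma_f$, and the constants arising in the $C^r$-distortion estimates must be shown to be independent of the measure.

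It remains to show that for the fixed finite partition $\xi_m$ (with $\mu$-null boundary), the map $\nu\mapsto H_\nu(\xi_m\mid f^{-1}\epsilon)$ is upper semi-continuous at $\mu$. Decompose
\[H_\nu(\xi_m\mid f^{-1}\epsilon)=H_\nu(\xi_m)-I_\nu(\xi_m;f^{-1}\epsilon)\]
and write the mutual information as $I_\nu(\xi_m;f^{-1}\epsilon)=\sup_k I_\nu(\xi_m;f^{-1}\beta_k)$ for a refining sequence of finite partitions $\beta_k\nearrow\epsilon$ with $f\mu(\partial\beta_k)=0$. Each $I_\nu(\xi_m;f^{-1}\beta_k)=H_\nu(\xi_m)+H_{f\nu}(\beta_k)-H_\nu(\xi_m\vee f^{-1}\beta_k)$ is continuous at $\mu$ by the choice of boundaries; the supremum is therefore lower semi-continuous, so $H_\nu(\xi_m\mid f^{-1}\epsilon)$ is upper semi-continuous at $\mu$. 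Combining,
\[\limsup_{n\to\infty}F_f(\mu_n)\leq H_\mu(\xi_m\mid f^{-1}\epsilon)+C\eta_m+\rho_m\leq F_f(\mu)+C\eta_m+\rho_m;\]
letting $m\to\infty$ completes the upper semi-continuity of $F_f$ on $\cP_{\bar\eta}(f)$.
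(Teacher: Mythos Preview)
Your outline follows the same two-part strategy as the paper—split the folding entropy into a coarse piece that is upper semi-continuous plus a degenerate remainder that is uniformly small on $\cP_{\bar\eta}(f)$—but the implementation differs in a way that matters. You decompose via the chain rule
\[
F_f(\nu)=H_\nu(\xi_m\mid f^{-1}\epsilon)+H_\nu(\epsilon\mid f^{-1}\epsilon\vee\xi_m),
\]
conditioning throughout on the exact fiber partition $f^{-1}\epsilon$. The paper instead approximates $F_f(\nu)$ from above by $H_\nu(\Gamma_k^{-1,c}\mid f^{-1}\Gamma_k)$, coarsening \emph{both} the preimage and image sides simultaneously using a dyadic scale $2^{-k}$. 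Your mutual-information argument for the upper semi-continuity of $H_\nu(\xi_m\mid f^{-1}\epsilon)$ is correct and is, if anything, cleaner than the paper's route, which obtains upper semi-continuity of the non-degenerate piece by proving a monotonicity lemma (Lemma~3.6) and writing the limit as an infimum of continuous finite sums.

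The substantive gap is exactly the step you flag. The paper does not prove your bound $H_\nu(\epsilon\mid f^{-1}\epsilon\vee\xi_m)\le C\eta_m+\rho_m$ directly, and its machinery is not a drop-in substitute. By collapsing the degenerate region into a \emph{single} atom $B_k$ of the approximating partition $\Gamma_k^{-1,c}$, the paper never needs to count preimages inside $V_m$: the degenerate contribution $\Delta_k^{(1)}$ is bounded using only the number of \emph{image} cells ($\#\Gamma_k\le C_12^{kN}$), the concavity of $-x\log x$, and the key inequality $k\,\nu(B_k)\le C_3\nu(B_k)-C_4\int_{B_k}\log|\Jac|\,d\nu$ (Lemma~3.2). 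The preimage branching is then handled scale-by-scale as components emerge from $B_{k-1}\setminus B_k$ (Lemma~3.4), and the tail of that telescoping series is again controlled by the degenerate rate. Your quantity $H_\nu(\epsilon\mid f^{-1}\epsilon\vee\xi_m)$, by contrast, is the genuine entropy of the conditional measures on $f^{-1}(y)\cap V_m$, which depends on the full preimage count in $V_m$ at once; bounding it by $C\eta_m$ would require reproducing that multi-scale decomposition inside $V_m$, not just invoking it. So the ``reworking of the [LW] techniques'' you defer is precisely where the paper spends Section~3.1--3.3, and the translation from the paper's coarsened-on-both-sides bookkeeping to your exact-fiber formulation is not automatic.
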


Note that on  $\cP_{\bar \eta}(f),$ the map $\mu\mapsto\int \log| \Jac(D_xf)|d\mu$
is continuous because the singular part, i.e., the integral on the neighborhood of $\Sigma_f$, is uniformly controlled. 
Thus, Theorem \ref{thm1} directly yields
the upper semi-continuity of the entropy production.

\begin{cor}\label{cor1}
Let $f$ be a $C^{r}(r>1)$ map  on a compact  Riemannian manifold   $M.$  Let $\{\mu_n\}_{n\ge 1}\subseteq\cP_{\bar\eta}(f)$ be a sequence of measures such that $\lim_{n\to\infty}\mu_n=\mu.$
Then it holds that
\begin{eqnarray*}\label{lim_meas}
e_f(\mu)\ge \limsup\nolimits_{n\to\infty}e_f(\mu_n).
\end{eqnarray*}
\end{cor}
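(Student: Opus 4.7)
The plan is to decompose the entropy production
\[
e_f(\mu_n)=F_f(\mu_n)-\int_M \log|\Jac(D_xf)|\,d\mu_n
\]
into two pieces and handle each by a different tool: the folding entropy term directly by Theorem \ref{thm1}, and the Jacobian integral by showing that $\mu\mapsto \int_M \log|\Jac(D_xf)|\,d\mu$ is continuous on $\cP_{\bar\eta}(f)$. Since $\limsup$ is subadditive and we will be subtracting a genuinely convergent sequence, these two facts will combine to give $\limsup_{n\to\infty} e_f(\mu_n)\le e_f(\mu)$.

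First, since remark (i) already shows that $\cP_{\bar\eta}(f)$ is weak$^*$-closed, the limit measure $\mu$ automatically lies in $\cP_{\bar\eta}(f)$, and Theorem \ref{thm1} gives $\limsup_{n\to\infty} F_f(\mu_n)\le F_f(\mu)$. This is the heart of the corollary and is cited as a black box.

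The real work is to show that on $\cP_{\bar\eta}(f)$ the integral of the (unbounded, sign-indefinite) function $\log|\Jac(D_xf)|$ is continuous in the weak$^*$-topology. I would proceed by truncation. Split $\log|\Jac(D_xf)|=\varphi^+-\varphi^-$ into positive and negative parts. The positive part $\varphi^+$ is continuous and bounded on $M$, so $\int \varphi^+ d\mu_n\to \int \varphi^+ d\mu$ by weak$^*$ convergence. For the negative part, which blows up to $+\infty$ on $\Sigma_f$, I would use the neighborhoods $\{V_m\}$: by continuity of $\Jac$ and compactness of $M\setminus V_m$, there exists $K_m>0$ with $\{\varphi^->K_m\}\subset V_m$. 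Hence the truncation $\varphi^-_{K_m}:=\min(\varphi^-,K_m)$ is continuous and satisfies
\[
\Big|\int \varphi^- d\nu-\int \varphi^-_{K_m}d\nu\Big|\le \int_{V_m}|\log|\Jac(D_xf)||\,d\nu\le \eta_m
\]
uniformly for every $\nu\in\cP_{\bar\eta}(f)$. Applying this to $\mu$ and each $\mu_n$, plus weak$^*$ convergence of $\int \varphi^-_{K_m}d\mu_n$, yields
\[
\limsup_{n\to\infty}\Big|\int \varphi^- d\mu_n-\int \varphi^- d\mu\Big|\le 2\eta_m,
\]
and letting $m\to\infty$ gives the desired continuity of the Jacobian integral on $\cP_{\bar\eta}(f)$.

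Combining the two pieces: $\limsup_n e_f(\mu_n)=\limsup_n F_f(\mu_n)-\lim_n\int\log|\Jac(D_xf)|d\mu_n\le F_f(\mu)-\int\log|\Jac(D_xf)|d\mu=e_f(\mu)$. The only substantive obstacle is the continuity of the Jacobian integral, because $\log|\Jac(D_xf)|$ is genuinely unbounded; but the uniform degenerate rate is precisely what tames the integrand near $\Sigma_f$ and permits the truncation-and-approximation argument above. Everything else is a direct appeal to Theorem \ref{thm1} and the weak$^*$-closedness recorded in remark (i).
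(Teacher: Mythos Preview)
Your proposal is correct and follows the same approach as the paper. The paper states the corollary as an immediate consequence of Theorem~\ref{thm1} together with the observation that $\mu\mapsto\int\log|\Jac(D_xf)|\,d\mu$ is continuous on $\cP_{\bar\eta}(f)$ because the singular part near $\Sigma_f$ is uniformly controlled; your truncation argument simply fills in the details of that continuity claim (and implicitly uses, as the paper does in remark~(i), the harmless normalization $|\Jac(D_xf)|<1$ on $V_m$ so that $|\log|\Jac||=-\log|\Jac|$ there).
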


\subsection{Applications to  interval maps}
As is well-known that for a measurable dynamical system, the (Kolmogorov-Sinai) {\it metric entropy}  measures 
the complexities (or uncertainties) produced during the dynamical evolutions. Among the 
various properties of metric entropy  that has been well studied, the upper semi-continuity has drawn much attention as it plays a key role for the existence of equilibrium states \cite{BowenBook,  walters82}.  
In the context of interval maps, we will   establish an equality relation between folding entropy and  metric entropy; see Theorem \ref{thm:entropy equality}. Then as an application of Theorem \ref{thm1}, 
the upper semi-continuity property of the latter one is obtained 
when invariant measures with uniform degenerate rates are considered. Henceforth,  for an interval map $f$,  let $\cM_{inv}(f)$ be the set of all  $f$-invariant Borel probability measures, and still,   $\bar\eta=\{\eta_{m}\}_{m\ge1}$ be the any sequence of positive real numbers approaching to zero. 
Then \[\mathcal M_{\bar \eta, inv}(f):=\cP_{\bar \eta}(f)\cap \mathcal{M}_{inv}(f)\] denotes the set of all $f$-invariant  Borel probability  measures with uniform degenerate rate $\bar\eta$. 

\begin{theorem}\label{thm2}
Let $f$ be a $C^{r}(r>1)$ interval map. 
Then the  metric entropy 
\begin{eqnarray*}
\mu\mapsto h_\mu(f)
\end{eqnarray*}
is upper semi-continuous on $\mathcal M_{\bar \eta, inv}(f).$
\end{theorem}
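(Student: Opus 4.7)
The plan is to deduce Theorem \ref{thm2} from Theorem \ref{thm1} by way of the auxiliary equality promised by Theorem \ref{thm:entropy equality}. First, I would note that $\mathcal{M}_{\bar\eta,inv}(f)=\mathcal{P}_{\bar\eta}(f)\cap\mathcal{M}_{inv}(f)$ is the intersection of two weak-$*$ closed subsets of $\mathcal{P}(M)$: the set of $f$-invariant measures is classically closed, and $\mathcal{P}_{\bar\eta}(f)$ was shown to be closed in remark (i) following its definition. Hence $\mathcal{M}_{\bar\eta,inv}(f)$ is a closed subset of $\mathcal{P}_{\bar\eta}(f)$, and applying Theorem \ref{thm1} directly yields upper semi-continuity of $\mu\mapsto F_f(\mu)$ on $\mathcal{M}_{\bar\eta,inv}(f)$.

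Second, I would invoke Theorem \ref{thm:entropy equality} to rewrite $h_\mu(f)$ in terms of $F_f(\mu)$. In one dimension the folding Ruelle inequality \eqref{folding_Ruelle_ineq} reads $h_\mu(f)\le F_f(\mu)-\int\min\{0,\log|f'(x)|\}\,d\mu$, and the natural shape for the equality claim is
\begin{equation*}
h_\mu(f)=F_f(\mu)-\int\min\{0,\log|f'(x)|\}\,d\mu
\end{equation*}
(or an equivalent reformulation), valid for invariant measures with $\mu(\Sigma_f)=0$, a condition automatic on $\mathcal{M}_{\bar\eta,inv}(f)$. The content of Theorem \ref{thm2} is then reduced to showing that the correction term $R_f(\mu):=\int\min\{0,\log|f'|\}\,d\mu$ depends continuously on $\mu\in\mathcal{M}_{\bar\eta,inv}(f)$, since the sum of an upper semi-continuous function and a continuous one is again upper semi-continuous.

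For that continuity, I would split the integral over the shrinking neighborhoods $V_m$ of $\Sigma_f$ and their complements. Outside $V_m$, the function $\min\{0,\log|f'|\}$ is bounded and continuous, so $\mu\mapsto \int_{M\setminus V_m}\min\{0,\log|f'|\}\,d\mu$ is continuous in the weak-$*$ topology. Inside $V_m$, the uniform degenerate rate bound $|\int_{V_m}\log|\Jac(D_xf)|\,d\mu|\le\eta_m$ controls the integral uniformly in $\mu\in\mathcal{M}_{\bar\eta,inv}(f)$, and the bound tends to $0$ as $m\to\infty$. Letting $m\to\infty$ delivers continuity of $R_f$ on $\mathcal{M}_{\bar\eta,inv}(f)$, and combining with the upper semi-continuity of $F_f$ obtained from Theorem \ref{thm1} finishes the argument.

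The principal obstacle is not located inside this deduction but in Theorem \ref{thm:entropy equality} itself, which supplies the reverse of the folding Ruelle inequality \eqref{folding_Ruelle_ineq} in dimension one and is the nontrivial ingredient. That equality relies on the special one-dimensional geometry (a single Lyapunov exponent, interval structure of preimage branches) and on $\mu(\Sigma_f)=0$. Once it is established, Theorem \ref{thm2} follows cleanly, with the uniform degenerate rate condition entering only through the resulting continuity of $R_f$.
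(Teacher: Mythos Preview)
Your overall plan---reduce Theorem \ref{thm2} to Theorem \ref{thm1} via the entropy equality of Theorem \ref{thm:entropy equality}---is exactly the paper's route. The paper's proof is literally one line: by \eqref{entropy_formula}, Theorem \ref{thm2} follows from Theorem \ref{thm1}.

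The discrepancy is that you have guessed the wrong content for Theorem \ref{thm:entropy equality}. The equality actually proved there is
\[
h_\mu(f)=F_f(\mu),
\]
with \emph{no} correction term, valid for every $\mu\in\mathcal M_{inv}(f)$ (no hypothesis $\mu(\Sigma_f)=0$ is needed). Your conjectured form $h_\mu(f)=F_f(\mu)-\int\min\{0,\log|f'|\}\,d\mu$ is not merely a different reformulation; it is false. For instance, if $\mu=\delta_{x_0}$ is supported on an attracting fixed point with $|f'(x_0)|<1$, then $h_\mu(f)=0$ and $F_f(\mu)=0$, while $-\int\min\{0,\log|f'|\}\,d\mu=-\log|f'(x_0)|>0$. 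The folding-type Ruelle inequality \eqref{Folding_Ruelle_ineq_1D} is a genuine inequality in one dimension, not an equality; the paper obtains $h_\mu(f)\le F_f(\mu)$ by combining it on the part of the space with positive exponent (where the negative-exponent term vanishes) with the Margulis--Ruelle inequality on the part with nonpositive exponent (where both $h$ and $F_f$ vanish).

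With the correct identity $h_\mu(f)=F_f(\mu)$ in hand, your subsequent continuity analysis of $R_f(\mu)$ is simply unnecessary: upper semi-continuity of $F_f$ on $\cP_{\bar\eta}(f)$ from Theorem \ref{thm1} transfers verbatim to $h_\mu(f)$ on $\mathcal M_{\bar\eta,inv}(f)$. Your argument for the continuity of $R_f$ on $\mathcal M_{\bar\eta,inv}(f)$ is itself correct and is essentially the observation (made just before Corollary \ref{cor1}) that $\mu\mapsto\int\log|\Jac(D_xf)|\,d\mu$ is continuous on $\cP_{\bar\eta}(f)$; it just plays no role here.
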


\noindent{\it Remark:}  
In the theory of differentiable  dynamical systems, the hyperbolicity and  the smoothness are two basic mechanisms leading to the upper semi-continuity of metric entropy.   On the one hand, 
for all $C^{\infty}$ systems, the upper semi-continuity of metric (or topological  entropy) always holds true \cite{New89, Yomdin}.  In fact, $C^{\infty}$ smoothness brings about the uniform control of degeneracy and hence foldings, which, as already shown by various examples \cite{Buzzi, Mis71, Mis73, Ruette},   can be destroyed if only finite smoothness is considered.  On the other hand,  for systems with some  hyperbolicity, 
e.g., the uniformly hyperbolic systems \cite{Bowen72}, the partially hyperbolic systems with one dimensional center \cite{CY, DFPV}, and the diffeomorphisms away from tangencies \cite{LVY},
the upper semi-continuity  property of metric entropy is shown to be valid.  In the particular setting of non-uniformly hyperbolic systems,  Newhouse proposed  hyperbolic rate to characterize  the level of hyperbolicity  for invariant measures, and proved the upper semi-continuity of  metric entropy  on the set with uniform hyperbolic rate \cite{New89}.  The  degenerate rate in this paper for the studying of folding entropy, without any requirements of  measure preserving and hyperbolicity, 
plays an analogous role of the hyperbolic rate from the view of the degeneracy.


\medskip

Besides the entropy,   dimension is another intrinsic characteristic of complexity in the dynamical theory. 
For a differentiable dynamical system, 
the interrelation among the various  dynamical quantities has been intensively
investigated \cite{ Led,   LY, Pesin, Ruelle78, Young82}. In particular, for an interval map $f$ and an ergodic $f$-invariant Borel  probability measure $\mu,$ 
the  following formula
\begin{eqnarray}\label{relation}
h_\mu(f)=\lambda^+(\mu)\dim(\mu)
\end{eqnarray}
was established in \cite{Led} under certain assumptions on the degeneracy, where $\lambda^+(\mu)=\max\{\lambda(\mu), 0\}$ is the positive part of the Lyapunov exponent $\lambda(\mu)$ of $\mu$.  In Section \ref{Application}, by developing an integrable version of the Brin-Katok formula, 
we show the validity of \eqref{relation} for any $C^r(r>1)$ interval map $f$ and {\it hyperbolic} (i.e., $\lambda(\mu)\neq0$) ergodic Borel probability measure $\mu$; see Theorem \ref{thm:dim_formula}. We emphasize that the hyperbolicity of measure is necessary here, since the dimension may not exist for non-hyperbolic measures in general \cite{KS, LM}.  
As a consequence of Theorem \ref{thm2}, we obtain the upper semi-continuity of dimension at all  hyperbolic measures on the set of  ergodic measures with uniform degenerate rate. 
For convenience, write \[\mathcal M_{\bar \eta, erg}(f)=\mathcal P_{\bar \eta}(f)\cap \mathcal{M}_{erg}(f),\] 
where  $\cM_{erg}(f)$ denotes the set of all ergodic $f$-invariant Borel probability measures.

\begin{theorem}\label{thm3}
Let $f$ be a $C^{r}(r>1)$ interval map. Then the  dimension 
\begin{eqnarray*}
\mu\mapsto \dim(\mu)
\end{eqnarray*}
 is upper semi-continuous  at all hyperbolic  measures in $\mathcal M_{\bar \eta, erg}(f).$
\end{theorem}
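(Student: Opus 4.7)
The plan is to derive Theorem \ref{thm3} from three ingredients already at hand: the dimension formula $h_\mu(f)=\lambda^+(\mu)\dim(\mu)$ of Theorem \ref{thm:dim_formula}, the upper semi-continuity of metric entropy on $\mathcal{M}_{\bar\eta,inv}(f)$ given by Theorem \ref{thm2}, and the continuity of the Lyapunov exponent on $\mathcal{M}_{\bar\eta,erg}(f)$. The strategy is to express $\dim(\mu)$ as the ratio $h_\mu(f)/\lambda(\mu)$ in the main case and then pass to the limsup in the numerator while the denominator converges to a strictly positive limit.

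Fix a hyperbolic ergodic measure $\mu\in\mathcal{M}_{\bar\eta,erg}(f)$ and let $\{\mu_n\}_{n\ge 1}\subseteq\mathcal{M}_{\bar\eta,erg}(f)$ converge to $\mu$ in the weak$^*$-topology. As observed immediately after Theorem \ref{thm1}, the map $\nu\mapsto\int\log|f'|\,d\nu$ is continuous on $\mathcal{P}_{\bar\eta}(f)$: its contribution over any $V_m$ is uniformly bounded by $\eta_m$, while outside $\Sigma_f$ the integrand $\log|f'|$ is continuous. Combined with Birkhoff's ergodic theorem, which gives $\lambda(\nu)=\int\log|f'|\,d\nu$ for every ergodic invariant $\nu$, this produces $\lambda(\mu_n)\to\lambda(\mu)$.

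I would then split into two cases according to the sign of $\lambda(\mu)$. In the main case $\lambda(\mu)>0$, Lyapunov continuity forces $\lambda(\mu_n)>\lambda(\mu)/2>0$ for all sufficiently large $n$, so each such $\mu_n$ is itself hyperbolic with $\lambda^+(\mu_n)=\lambda(\mu_n)$. Applying Theorem \ref{thm:dim_formula} to $\mu$ and to each large-$n$ $\mu_n$ gives
\[
\dim(\mu)=\frac{h_\mu(f)}{\lambda(\mu)}, \qquad \dim(\mu_n)=\frac{h_{\mu_n}(f)}{\lambda(\mu_n)}.
\]
Combining $\limsup_n h_{\mu_n}(f)\le h_\mu(f)$ from Theorem \ref{thm2} with $\lim_n\lambda(\mu_n)=\lambda(\mu)>0$ then yields $\limsup_n \dim(\mu_n)\le\dim(\mu)$, which is the desired upper semi-continuity. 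In the residual case $\lambda(\mu)<0$, a standard one-dimensional Pesin-type argument forces $\mu$, and for large $n$ also $\mu_n$, to be supported on attracting periodic orbits; these measures are atomic with $h=0$ and $\dim=0$, so the inequality is trivial.

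The main obstacle I foresee is the possibility that some $\mu_n$ fail to be hyperbolic, since Theorem \ref{thm:dim_formula} then provides no handle on $\dim(\mu_n)$ and the dimension may not even exist. The continuity of the Lyapunov exponent on $\mathcal{P}_{\bar\eta}(f)$---which is precisely where the uniform degenerate rate assumption pays off---defuses this issue by forcing $\mu_n$ to be hyperbolic for all but finitely many $n$, so the exceptional terms do not affect the limsup. A subsidiary bookkeeping point is to verify that the integrability condition \eqref{finite LE} and the uniform tail bound are inherited by the limit, both of which follow from the closedness of $\mathcal{P}_{\bar\eta}(f)$ recorded in remark (i) of the introduction.
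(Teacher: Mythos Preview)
Your proposal is correct and follows essentially the same route as the paper: continuity of $\nu\mapsto\int\log|f'|\,d\nu$ on $\mathcal{P}_{\bar\eta}(f)$ gives convergence of Lyapunov exponents, after which the two cases $\lambda(\mu)>0$ and $\lambda(\mu)<0$ are handled exactly as you describe via Theorem~\ref{thm:dim_formula}, Theorem~\ref{thm2}, and the attracting-periodic-orbit observation. The paper's own proof is slightly more terse but the logical skeleton is identical.
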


\subsection{Sharpness of the  uniform degenerate rate condition}
For the previous known examples concerning the loss of upper semi-continuity of metric entropy, the ergodic measures at which the  upper semi-continuity fails are all atomic and hence admit zero metric entropy  (see, for instance,  \cite{Buzzi, Mis73, Ruette}).  Note  that the positivity of metric entropy for an ergodic measure implies that the  hyperbolicity holds  in a  nontrivial full-measure set  (due to the Margulis-Ruelle inequality \cite{Ruelle78} and ergodicity). In the consideration that the hyperbolicity  has been a possible  approach to  the upper semi-continuity of metric entropy \cite{Bowen72}, there arises a natural question suggested by  Burguet in \cite{Burguet}

\vspace{2mm}

\noindent{\it\bf Question:} Is the metric entropy  of a $C^r$ ($r> 1$) interval map   upper semi-continuous at ergodic measures with positive entropy?

\vspace{2mm}

Taking into account the approximation of measures by the generic orbits,   we will  construct a type  of modified  examples  in Section \ref{example} for which the loss of upper semi-continuity of metric entropy occurs at a non-atomic ergodic measure, say $\mu,$ with positive entropy.  The defect of upper semi-continuity of metric entropy at $\mu$ follows from the infinite ``foldings" around a homoclinic tangency  whose images in average converging to $\mu;$
see Figure \ref{fig:accumulation of small horseshoes}. The approximation is realized by taking some generic point of $\mu,$ say $x_0$, and then elaborately choose  a sequence of (ergodic) measures whose generic points follow the orbit of $x_0$  with large frequency, but do NOT admit a  uniform degenerate rate. 
 This indicates that 
in the setting of differentiable systems (with degeneracy), the uniform degenerate rate  serves as an essential condition for  the upper semi-continuity of folding entropy.

\begin{theorem}\label{thm4}
For any $1<r<\infty$, there exist   $C^r$ interval maps admitting  ergodic measures with positive entropy as the non upper semi-continuity points of the metric (or folding) entropy. 
\end{theorem}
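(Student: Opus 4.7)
The plan is to build, for any fixed $1<r<\infty$, a $C^r$ interval map $f$ together with a non-atomic ergodic measure $\mu$ of positive metric entropy at which $\nu\mapsto h_\nu(f)$ fails to be upper semi-continuous. The template is the classical Misiurewicz--Buzzi--Ruette type construction, but modified so that the bad point is a non-atomic ergodic measure with $h_\mu(f)>0$ rather than a Dirac mass at a fixed point. The mechanism is to hide infinite foldings inside shrinking neighborhoods of a degenerate point, yet have the ergodic measures carrying those foldings converge to $\mu$ because their typical orbits spend most of their time tracking a $\mu$-generic orbit.

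First, I would start from a $C^r$ baseline $g$ that is uniformly expanding on some invariant subinterval $J$ (e.g.\ a $C^r$ smoothing of a Markov piecewise linear map), producing a non-atomic ergodic measure $\mu$ fully supported on $J$ with $h_\mu(g)>0$. Fix a $\mu$-generic point $x_0\in J$. Outside $J$, I would insert a critical point $c\in\Sigma_f$ with a local model of the form $x\mapsto a|x-c|^{\alpha}+b$ with $\alpha>1$ compatible with the prescribed $C^r$ regularity, and arrange a homoclinic tangency of a hyperbolic saddle accumulating on $c$. The map $f$ then agrees with $g$ on $J$ and exhibits pathological foldings near $c$.

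Next, exploiting this tangency, I would construct, in shrinking neighborhoods $U_n\downarrow\{c\}$, a sequence of horseshoe-like invariant sets $\Lambda_n$ whose induced return dynamics produces an ergodic measure $\nu_n$ satisfying $h_{\nu_n}(f)\ge h_\mu(f)+\delta$ for some $\delta>0$ independent of $n$. The horseshoes are designed so that an orbit in $\Lambda_n$ shadows the $g$-orbit of $x_0$ for a fraction $1-\varepsilon_n$ of its time (with $\varepsilon_n\downarrow 0$) and only sporadically visits $U_n$ to generate the extra branches; by the Abramov/Kac return formula this preserves the entropy excess $\delta$, while the Birkhoff averages force $\nu_n\xrightarrow{w^\ast}\mu$. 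Therefore
\[
\limsup\nolimits_{n\to\infty}h_{\nu_n}(f)\ge h_\mu(f)+\delta>h_\mu(f),
\]
which is the sought failure of upper semi-continuity at the positive-entropy ergodic measure $\mu$. The folding-entropy statement follows immediately from the equality $F_f(\nu)=h_\nu(f)$ in the one-dimensional setting established in Theorem~\ref{thm:entropy equality}.

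The main technical obstacle is the simultaneous quantitative matching: one needs $C^r$ smoothness of the local model at $c$ together with horseshoes of uniformly positive excess entropy inside $U_n$, while the tangency geometry is being squeezed. This is precisely where finite smoothness is essential, since $C^\infty$ regularity would invoke the Newhouse--Yomdin upper semi-continuity and obstruct the construction. Consistency with Theorem~\ref{thm2} must also be verified: the constructed $\{\nu_n\}$ must deposit non-negligible mass on $V_m\cap U_n$, so that
\[
\Big|\int_{V_m}\log|\Jac(D_xf)|\,d\nu_n\Big|\;\text{ is not bounded uniformly in } n\text{ by any fixed }\eta_m\downarrow 0,
\]
i.e.\ $\{\nu_n\}$ admits no uniform degenerate rate. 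This is the sharpness claim that ties the example back to the hypotheses of Theorems~\ref{thm1} and \ref{thm2}.
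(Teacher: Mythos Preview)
Your outline matches the paper's construction closely: a linear $2$-branch horseshoe $\Lambda$ with slope $\lambda$, an ergodic $\mu$ supported on $\Lambda$ with $0<h_\mu(f)=c<\tfrac{1}{r}\log\lambda$, a $\mu$-generic point $x_0$, and a sequence of auxiliary horseshoes whose orbits shadow $\operatorname{orb}(x_0)$ for a time fraction tending to $1$, so that the supporting ergodic measures $\nu_k$ converge weak$^*$ to $\mu$ while $h_{\nu_k}(f)\to\tfrac{1}{r}\log\lambda>h_\mu(f)$. The sharpness verification (no uniform degenerate rate for $\{\nu_k\}$) and the passage to folding entropy via Theorem~\ref{thm:entropy equality} are also exactly as in the paper.

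The one step that would fail as written is your local model. A single critical point with profile $x\mapsto a|x-c|^{\alpha}+b$ is locally at most $2$-to-$1$ and cannot by itself supply the many preimage branches needed for a uniform entropy excess after one return. The paper instead lays down, in disjoint intervals $J_k$ accumulating on a preimage $z_0$ of $x_0$, explicit oscillations
\[
f|_{J_k}(x)=A_k^{\,r}\cos\bigl(\omega_k(x-c_k)\bigr)+(x_0+A_k^{\,r}),\qquad A_k=\Bigl(\tfrac{\delta_0}{2}\lambda^{-n_k}\Bigr)^{1/r},\quad \omega_k=\dfrac{L}{A_k},
\]
with $M_k\asymp\lambda^{n_k/r}$ full periods inside $J_k$. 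The identity $A_k^{\,r}\omega_k^{\,i}=A_k^{\,r-i}L^{\,i}\le L^{\,r}$ for $0\le i\le r$ is precisely what secures $C^r$ regularity while allowing $M_k\to\infty$; a single pass through $J_k$ then yields a $2M_k$-horseshoe for $f^{\,n_k+N+1}$ and hence $h_{\nu_k}(f)=\log(2M_k)/(n_k+N+1)\to\tfrac{1}{r}\log\lambda$. Your two-dimensional ``homoclinic tangency of a hyperbolic saddle'' language has no direct one-dimensional analogue here; the actual mechanism is this packet of increasingly rapid oscillations. Replace the power-law critical point by such an oscillatory model and your plan becomes the paper's proof.
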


This paper is organized as follows. In Section \ref{sec:preliminary}, we review the basic notions in the ergodic and entropy theory that will be used throughout this paper. In Section \ref{sec:degenerate}, 
we prove Theorem \ref{thm1}.  In Section \ref{Application},  we  discuss the applications 
to the  one-dimensional setting. 
An entropy formula (Theorem \ref{thm:entropy equality})  and  a dimension formula  (Theorem \ref{thm:dim_formula}) are established for all $C^r(r>1)$ interval maps. Theorem \ref{thm2} and Theorem \ref{thm3} then follows almost immediately. 
In Section \ref{example},  we concretely construct a type  of  modified interval maps for which
a typical failing mechanism of the upper semi-continuity of metric entropy at an ergodic measure with positive entropy is developed. 
The sharpness of the condition about the uniform degenerate rate in Theorem \ref{thm1} and Theorem \ref{thm2}  is also illustrated.

\section{Preliminaries}\label{sec:preliminary}
In this section, we recall  some basic notions and facts in the ergodic and entropy theory that will be used in the later discussions. Readers may refer to \cite{Rok, walters82} for more details.

\medskip

\medskip

\noindent{\it - Invariant  measure, partition and entropy.} 
Let  $X$ be a compact metric space  with  the Borel $\sigma$-algebra $\mathcal B$, and $f:X\to X$ be a measurable transformation.
Recall as in the introduction that $\cP(X)$ denotes the set of all  Borel probability measures on $X$ endowed with the  weak$^*$-topology, i.e.,  $\mu_n\to\mu$ in $\cP(X)$ if and only if for any  continuous function $\varphi$, $\int\varphi d\mu_n\to\int\varphi d\mu$ as $n\to\infty.$ 
For $\mu\in\cP(X),$
the {\it image} of $\mu$ under $f$ is given by  $ f\mu(B)=\mu(f^{-1}B), \forall B\in\cB,$ and $\mu$ is said to be {\it $f$-invariant} if $ f\mu=\mu.$ Denote by $\cM_{inv}(f)$  the set of all  $f$-invariant Borel probability measures.

 A  {\it partition} $\xi=\{A_{\alp}\}_{\alp\in\mathscr A}$ of $X$ 
is a collection of  disjoint elements of $\cB$ such that $\cup_{\alpha\in\mathscr A}A_{\alp}=X.$
In particular, $\xi$ is called {\it finite} if  the  cardinality  $\#\mathscr A<\infty.$
Given two partitions $\xi=\{A_\alpha\}_{\alpha\in\mathscr A}$ and $\zeta=\{C_\gamma\}_{\ga\in\mathscr C},$
the join of  $\xi$ and $\zeta$ is a partition of $X$, denoted as $\xi\vee\zeta,$ such that
\begin{eqnarray*}
\xi\vee\zeta=\{A_\alpha\cap C_\gamma\}_{\alpha\in\mathscr A,\gamma\in\mathscr C}.
\end{eqnarray*}
If $\xi\vee\zeta=\zeta,$ i.e., 
each element of $\xi$ is a union of elements of $\zeta,$ then 
we call $\zeta$ is a {\it refinement} of $\xi,$ 
and write $\xi\preceq\zeta.$
A sequence of partitions $\{\xi_n\}_{n\ge1}$
is said to be {\it refining} (or {\it increasing}) if $\xi_1\preceq\xi_2\preceq\cdots\preceq\xi_n\preceq\cdots.$

Given $\mu\in\cP(X)$ and a {\it finite} partition $\xi$ of $X,$  the {\it entropy} of $\xi$ with respect to $\mu$ is   
\begin{eqnarray*}
H_\mu(\xi)=\displaystyle\int_X -\log\mu(\xi(x))d\mu(x),
\end{eqnarray*}
where  $\xi(x)$ denotes the element of $\xi$ containing $x.$ 
For any two finite partitions $\xi$ and $\zeta$,  the {\it conditional entropy} of $\xi$ given $\zeta$ is 
\begin{eqnarray*}\label{eq:conditional-entropy}
H_\mu(\xi|\zeta)=\sum_{C\in\zeta}\mu(C) H_{\mu_C}(\xi|_C)
=\sum_{A\in\xi,C\in\zeta}-\mu(A\cap C)\log\Big(\dfrac{\mu(A\cap C)}{\mu(C)}\Big)
\end{eqnarray*}
where $\mu_C(A):=\mu(A\cap C)/\mu(C)$ is the conditional measure of $A$ given $C,$ and $\xi|_C$ denotes the partition $\xi$ restricted on $C.$
For any   $\mu\in \cM_{inv}(f)$,  the {\it metric entropy} of $f$ with respect to $\mu$ and $\xi$ is
\begin{eqnarray}\label{metric-entropy-by-conditional-entropy}
h_\mu(f,\xi)=\lim_{n\to\infty}H_\mu(\xi|\bigvee_{i=1}^n f^{-i}\xi)=\inf_{n\ge1}H_\mu(\xi|\bigvee_{i=1}^n f^{-i}\xi),
\end{eqnarray}
and the metric entropy of $f$  with respect to $\mu$  is 
\begin{eqnarray*}
h_\mu(f)=\sup_{\xi}h_{\mu}(f,\xi),
\end{eqnarray*}
where the supremum is taken over all finite partitions of $X.$

We remark that the entropy and conditional entropy can be defined in the more general setting where  $(X,\cB, \mu)$ is a Lebesgue space and $\xi,\zeta$ are measurable partitions. The folding entropy is a particular type of conditional entropy in this setting with $\xi$ and $\zeta$ be chosen as $\eps$ and $f^{-1}\eps,$ respectively.  See \cite{Rok} for more  discussions on this. 

In the estimation of entropy,   a   basically important function 
from the information theory is as follows: 
\begin{eqnarray*}
	\phi(x)=\begin{cases}
		-x\log x,&x\in(0,1];\\
		0,&x=0.
	\end{cases}
\end{eqnarray*} 
As a simple result of the concavity of $\phi$, the following proposition will be use several times in section \ref{sec:degenerate}. 
\begin{pro}\label{prop:phi}
	Let $\{p_i\}_{i=1}^n$ be such that $\sum_{i=1}^n p_i=1$ and   $p_i\ge0, 1\le  i\le n$, and $\{x_i\}_{i=1}^n \subset [0,1]$.    Then
	\[\sum_{i=1}^n p_{i}\phi(x_{i})\le\phi(\sum_{i=1}^np_i  x_i).\]
	In particular,   $\sum_{i=1}^n \phi(p_{i})\le\log n.$
\end{pro}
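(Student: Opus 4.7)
The plan is to deduce the first inequality as a direct consequence of the concavity of $\phi$ on $[0,1]$, and then to obtain the second inequality as a special case of the first.

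First I would verify that $\phi$ is concave on $[0,1]$. On the open interval $(0,1]$ this is immediate from $\phi''(x)=-1/x<0$, and continuity at the origin (since $\lim_{x\to 0^+}(-x\log x)=0=\phi(0)$) lets concavity extend to the closed interval $[0,1]$. With concavity in hand, the first inequality
\[\sum_{i=1}^n p_i\phi(x_i)\le\phi\Bigl(\sum_{i=1}^n p_i x_i\Bigr)\]
is precisely Jensen's inequality applied to the convex combination $\sum p_i x_i$, using that $\{p_i\}_{i=1}^n$ is a probability vector and each $x_i\in[0,1]$ (so the argument on the right also lies in $[0,1]$, where $\phi$ is defined).

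For the ``in particular'' assertion, I would apply the first inequality with the uniform weights $q_i=1/n$ and with the given $p_i$ playing the role of $x_i$ (note $p_i\in[0,1]$ since $\sum p_i=1$ and $p_i\ge 0$). Jensen then yields
\[\frac{1}{n}\sum_{i=1}^n \phi(p_i)\le\phi\Bigl(\sum_{i=1}^n\frac{1}{n}\,p_i\Bigr)=\phi\Bigl(\frac{1}{n}\Bigr)=\frac{\log n}{n},\]
and multiplying through by $n$ gives $\sum_{i=1}^n\phi(p_i)\le\log n$.

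There is no real obstacle here; the only care needed is to handle the boundary value $\phi(0)=0$ so that Jensen's inequality applies across all of $[0,1]$ (in particular when some $p_i$ or $x_i$ vanish), but this is automatic from the continuity of $\phi$ at the origin.
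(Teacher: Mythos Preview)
Your proof is correct and matches the paper's approach: the paper itself does not give a detailed proof, stating only that the proposition is ``a simple result of the concavity of $\phi$,'' which is precisely the Jensen-inequality argument you spell out. Your derivation of the ``in particular'' clause via uniform weights $q_i=1/n$ is the standard way to fill in this step.
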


\medskip

\noindent{\it - Dimension of a measure.}
For  the compact metric space   $X$ with distance $d$,  given any finite Borel measure $\mu$, for $\mu$-a.e. $x\in X$, the  lower and upper local dimension are respectively defined as 
\begin{eqnarray*} 
	\underline{\dim}(x) =\liminf_{\delta\to0}\frac{\log \mu(B(x,\delta))}{\log\delta},\quad
	\overline{\dim}(x) =\limsup_{\delta\to0}\frac{\log \mu(B(x,\delta))}{\log\delta},
\end{eqnarray*}
where $B(x,\delta)=\{y\in X: d(y,x)<\delta\}.$
The measure $\mu$ is said to have local dimension at $x$
if $\underline{\dim}(x)=\overline{\dim}(x).$ 
For a subset $Y\subset X$ and a number $t> 0$,  the $t$-Hausdorff measure of $Y$ is defined by
\[m_{H}(Y,t)=\lim_{\vep\to0}\inf_{\mathcal{U}} \sum\nolimits_{U\in \mathcal{U}} \diam^{t}(U),\]
where the infimum is taken over all finite or countable coverings  $\mathcal{U}$ of $Y$ by open sets with $\diam( \mathcal{U})\le \vep$. 
The Hausdorff dimension of $Y$ and then of $\mu$ are respectively defined as
\begin{eqnarray*} 
&&\dim_H (Y)=\inf\big{\{}t: m_H(Y, t) = 0\big{\}} = \sup\big{\{}t: m_H(Y, t) = \infty\big{\}},\\[2mm] 
&&\dim_H(\mu)=\inf\big{\{}\dim_H(Y): \mu(Y)=1\big{\}}.
\end{eqnarray*}

The following classical result says that under standard conditions, the local dimension exists and coincides with the Hausdorff dimension. 
\begin{pro}[Young \cite{Young82}]\label{prop:young} Let $X$ be a compact separable metric space of
	finite topological dimension and $\mu$ be a finite Borel measure on $X$ satisfying
	\begin{eqnarray}\label{exact}\underline{\dim}(x) =\overline{\dim}(x) = D,\quad\mu{\text{-a.e.}}\ x\in X.
	\end{eqnarray}
Then $\dim_H(\mu) =  D.$
\end{pro}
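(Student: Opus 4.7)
The plan is to prove $\dim_H(\mu)\le D$ and $\dim_H(\mu)\ge D$ separately, exploiting the hypothesis $\underline{\dim}(x)=\overline{\dim}(x)=D$ in complementary directions. The lower bound reduces to the mass distribution principle; the upper bound requires a Besicovitch-type covering argument, and this is where the finite topological dimension hypothesis is invoked.

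For the lower bound, fix $\epsilon>0$. Using $\underline{\dim}(x)\ge D$ a.e.\ and an Egorov-type reduction, extract a measurable set $Y_{\epsilon}$ with $\mu(Y_{\epsilon})>1-\epsilon$ and a uniform threshold $r_{0}>0$ such that $\mu(B(x,r))\le r^{D-\epsilon}$ holds simultaneously for all $x\in Y_{\epsilon}$ and all $r<r_{0}$. For any countable cover $\{U_{i}\}$ of $Y_{\epsilon}$ by sets of diameter less than $r_{0}$, pick $x_{i}\in U_{i}\cap Y_{\epsilon}$ (whenever the intersection is nonempty) so that $U_{i}\subseteq B(x_{i},\diam U_{i})$, and write
\[\mu(Y_{\epsilon})\le\sum_{i}\mu(B(x_{i},\diam U_{i}))\le\sum_{i}(\diam U_{i})^{D-\epsilon}.\]
Taking the infimum over such covers yields $m_{H}(Y_{\epsilon},D-\epsilon)\ge1-\epsilon$, hence $\dim_{H}(Y_{\epsilon})\ge D-\epsilon$. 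Since the complementary set has $\mu$-measure at most $\epsilon$ and $\epsilon>0$ is arbitrary, one concludes $\dim_{H}(\mu)\ge D$.

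For the upper bound, fix $\epsilon>0$ and use $\overline{\dim}(x)\le D$ a.e.\ to obtain, again via Egorov, a set $Z_{\epsilon}$ with $\mu(Z_{\epsilon})>1-\epsilon$ on which $\mu(B(x,r))\ge r^{D+\epsilon}$ holds uniformly for all sufficiently small $r$. A Besicovitch-type covering theorem, available under the finite topological dimension hypothesis, provides, for each small $\delta>0$, a countable family of balls $\{B(x_{i},r_{i})\}$ with $x_{i}\in Z_{\epsilon}$, $r_{i}<\delta$, covering $Z_{\epsilon}$ with multiplicity bounded by a constant $C$. Then
\[\sum_{i}r_{i}^{D+\epsilon}\le\sum_{i}\mu(B(x_{i},r_{i}))\le C\mu(X)<\infty,\]
so $m_{H}(Z_{\epsilon},D+\epsilon)<\infty$ and hence $\dim_{H}(Z_{\epsilon})\le D+\epsilon$. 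A standard diagonal argument (forming $Z=\bigcap_{k}\bigcup_{n\ge k}Z_{\epsilon_{n}}$ for a sequence $\epsilon_{n}\to0$) yields a $\mu$-full measure set of Hausdorff dimension at most $D$, giving $\dim_{H}(\mu)\le D$.

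The main technical obstacle is the Besicovitch-type covering step: in an arbitrary compact metric space one cannot expect to cover a set by balls of prescribed radii with uniformly bounded multiplicity. The assumption of finite topological dimension is precisely what permits such a covering lemma, either by locally embedding into some $\R^{N}$ and invoking the classical Besicovitch theorem, or by a direct metric-space variant. The lower bound, in contrast, is a structure-free application of the mass distribution principle.
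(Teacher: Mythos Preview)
The paper does not supply its own proof of this proposition: it is quoted verbatim as a result of Young \cite{Young82} and used as a black box in the dimension analysis of Section~\ref{Application}. There is therefore no in-paper argument to compare against.

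Your outline is essentially Young's original strategy and is correct in its architecture. The lower bound via the mass distribution principle is standard and needs no structural hypothesis on $X$; one small point of exposition is that to conclude $\dim_H(\mu)\ge D$ you should check that \emph{every} set of full $\mu$-measure has Hausdorff dimension at least $D$, which follows by intersecting an arbitrary full-measure set with your $Y_\epsilon$ and rerunning the covering estimate. For the upper bound, your identification of the covering step as the crux is accurate, and your diagonal construction of a full-measure set of dimension $\le D$ is fine. It is worth noting, though, that for the Hausdorff-dimension statement alone the $5r$-covering lemma (valid in any separable metric space) already suffices: choose disjoint balls $B(x_i,r_i)$ whose five-fold enlargements cover $Z_\epsilon$, and estimate $\sum(5r_i)^{D+\epsilon}\le 5^{D+\epsilon}\sum\mu(B(x_i,r_i))\le 5^{D+\epsilon}\mu(X)$. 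The finite-topological-dimension hypothesis in Young's theorem is really needed for the stronger conclusion (also in \cite{Young82}) equating the local dimension with the \emph{box-counting} dimension of $\mu$, where a genuine Besicovitch-type bounded-multiplicity cover --- obtained via the Menger--N\"obeling embedding into Euclidean space --- becomes essential.
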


Any measure  $\mu$ satisfying \eqref{exact} is said to be {\it exact dimensional} with the common value denoted by $\dim(\mu).$

\section{upper semi-continuity of folding entropy }\label{sec:degenerate}
In this section, we prove  Theorem \ref{thm1}.  Throughout, let $f$ be a $C^r (r>1)$ map on a compact  Riemannian manifold $M.$ Before proceeding to the proof, we would like to note that the basic idea in the handling of the degeneracy in the proof,  though are based on that in \cite{LW} with some further refining estimations,  are more clearly explained and revealed here.

\subsection{Construction of refining partitions}
For convenience of analysis,  we embed $M$ into an  Euclidean space $\mathbb{R}^N.$ 
Then, one can find tubular neighborhoods  ${T_1}\subset  T_2$ of $M$ in $\mathbb{R}^N,$
 and a $C^{r}$ extension $g$ of $f$ from $ T_1$  to $T_2$   such that  $g^{-1}(\overline{T}_1))\subset  T_1$ and
$M=\cap_{n\in \mathbb{N}}\,g^{-n}(T_1)$.
Without confusion, we still write $g|_{T_1}$ as $f$ for simplicity. 
Let $\alpha=\min\{r-1,1\}.$ Then the derivative of $f$, denoted as $Df,$ is $\alpha$-H\"{o}lder continuous, i.e, there
exists $K>1$  such that
\begin{eqnarray}\label{Holder}
\|D_xf-D_yf\|\leq\,Kd(x,y)^{\alpha},\quad \forall\,x, y\in T_1.
\end{eqnarray}

To approximate the folding entropy, we shall use a sequence of {\it refining}  finite partitions to approximate the measurable partitions $\{\eps|_{f^{-1}\eps(x)}\}_{x\in T_1}$.  
To begin with, for each $k\in \N,$  define the partition $\Gamma_k$ of $\R^N$ as  \[\Gamma_k=\Big{\{}(\frac{q_1}{2^k},\frac{q_1+1}{2^k})
\overbrace{\times\cdots\times}^{N}(\frac{q_N}{2^k},\frac{q_N+1}{2^k})
~:~q_1,\cdots,q_N \in \mathbb{Z}\Big{\}}.\]
Plainly,  $\Gamma_1\preceq\cdots\preceq\Ga_k\preceq\cdots.$
Since $f$ only acts on $T_1,$  henceforth, by $\Gamma_k$ we mean $\Gamma_k|_{T_1}.$
Note that there exists a constant $C_1>0$, independent of $k, N$, such that
\begin{eqnarray}\label{number}
\#\big{\{}A\in \Gamma_k: A\cap T_1\neq \emptyset \big{\}} \leq C_12^{kN}, \quad \forall\,k\in \mathbb{N}.
\end{eqnarray}

Intuitively, to approximate the $f$-inverse partition $f^{-1}\eps,$
we need to consider the
``pull-back" of $\{\Gamma_k\}.$ 
In achieving this, we have to deal with the degeneracy of $f$. 
Given any $\vep>0$,  define
 \[U_{\vep}=\big{\{}x\in T_1: m(D_xf)<\vep\big{\}}, \quad G_{\vep}=\big{\{}x\in T_1:  m(D_xf)\geq\vep\big{\}},\]
 where $m(\Phi):=\inf_{\|z\|=1}\|\Phi(z)\|$ denotes the small norm of  a linear operator $\Phi$.  By  \eqref{Holder}, if take $r_{\vep}=(\frac{\vep^2}{4K})^{\frac{1}{\alpha}},$ we have  $m(D_xf)>\frac{\vep}{2}, \forall x\in G_{\vep}.$
 For each $k\in\N,$ let 
$\vep_k=\vep_02^{-k\beta}$ 
with $\beta=\frac{\alpha}{2+\alpha}$ and $ \vep_0=(2(4K)^{\frac{1}{\alpha}}\sqrt{N})^{\beta}.$ 
Then corresponding to each $\Ga_k$, $k\in\N,$ 
\begin{eqnarray}\label{contain}
f(B(x,r_{\vep_k}))\supseteq \Gamma_k(fx),\quad\forall x\in G_{\vep_k},
\end{eqnarray}
where  $\Gamma_k(fx)$ denotes the element of $\Gamma_k$ containing $fx.$
Obviously, $\{U_{\vep_k}\}$ is a sequence of decreasing neighborhoods of $\Sigma_f$ as $k\to\infty.$

Now,  for each $P\in \Gamma_k$,  denote $P^{-1, c}$ as the set of all the connected components of $f^{-1}P.$  
Separating  and collecting the components ``near" $\Sigma_f$,   let \[B_k=\bigcup_{P\in \Gamma_k}\,\, \bigcup_{\substack{Q\in P^{-1, c},\\ Q\cap U_{\vep_k}\neq \emptyset}}\,\, Q.\]
Obviously, $B_k\supset U_{\vep_k};$ see Figure \ref{fig:partition_def}(A). 
The following lemma shows that $B_k$ is contained in certain $U_{\delta}$ for $\delta$ being the same order as $\vep_k.$
\begin{lemma}\label{lem:B_k}
For any $k\ge1,$
 	\begin{eqnarray*}
 		B_k\subset U_{C_22^{-k\beta}},
 	\end{eqnarray*}	
for some constant $C_2>0$ independent of $k, \beta.$
 \end{lemma}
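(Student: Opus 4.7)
The plan is a proof by contradiction. Fix a constant $C > 2$ (for concreteness $C=3$); I will show that no point $x \in B_k$ can have $m(D_xf) \ge C\vep_k$ once $k$ is past some threshold $k_0$. The finitely many remaining values of $k$ are then handled by the trivial bound $m(Df) \le \|Df\|_\infty$, so the lemma holds with, say, $C_2 = \max(3\vep_0,\|Df\|_\infty\cdot 2^{k_0\beta})$.

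The first step is to construct a local inverse branch of $f$ at $x$ on the enlarged scale $C\vep_k$ instead of $\vep_k$. The hypothesis $m(D_xf) \ge C\vep_k$ propagates via \eqref{Holder} to $m(D_zf) \ge C\vep_k/2$ for every $z \in B(x,r_{C\vep_k})$, and a standard $\alpha$-H\"{o}lder remainder argument upgrades this to a bi-Lipschitz estimate $|f(y)-f(z)| \gtrsim C\vep_k|y-z|$ on this ball. Rerunning the derivation of \eqref{contain} at scale $C\vep_k$ shows $f(B(x,r_{C\vep_k})) \supseteq P = \Gamma_k(fx)$, so there is a well-defined branch $\psi_x \colon P \to B(x,r_{C\vep_k})$ of $f^{-1}$ with $\psi_x(fx) = x$ and Lipschitz constant $\lesssim 1/(C\vep_k)$.

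The heart of the argument is to identify $\psi_x(P)$ with the \emph{entire} connected component $Q$ of $f^{-1}(P)$ through $x$. Openness of $\psi_x(P)$ in $Q$ is automatic; for closedness, any limit point $z$ of $\psi_x(P)$ in $M$ lies in $\psi_x(\overline{P})$, so $z = \psi_x(p)$ for some $p \in \overline{P}$, and if $z$ additionally lies in $Q \subseteq f^{-1}(P)$ then $f(z) = p \in P$ forces $z \in \psi_x(P)$. The key quantitative input is the diameter bound $\diam\psi_x(\overline{P}) \lesssim \diam(P)/(C\vep_k) = r_{\vep_k}/C$ (via the identity $r_{\vep_k}\vep_k/2 = \sqrt{N}/2^k$ built into the choice of $\vep_0$), which is strictly smaller than $r_{C\vep_k} = C^{2/\alpha} r_{\vep_k}$ for any $C > 1$; this strict interior containment is what prevents $\psi_x(\overline P)$ from reaching the boundary of the ball on which the branch is controlled, so by connectedness $Q = \psi_x(P)$.

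With this identification in hand, the contradiction is immediate: $x \in B_k$ gives some $y \in Q \cap U_{\vep_k}$, hence $d(x,y) \lesssim r_{\vep_k}/C$ and $m(D_yf) < \vep_k$, so a final application of \eqref{Holder} yields
\[
m(D_xf) \le m(D_yf) + K\,d(x,y)^\alpha < \vep_k + \frac{\vep_k^2}{C^\alpha} < 2\vep_k
\]
for $k \ge k_0$, contradicting $m(D_xf) \ge C\vep_k \ge 3\vep_k$. The main obstacle I anticipate is precisely the clopen identification $Q = \psi_x(P)$: a priori $Q$ could be much larger than the local branch through $x$ if other preimage sheets reattached through nearby points of $\Sigma_f$, and the entire scheme works only because promoting the control scale from $\vep_k$ to $C\vep_k$ buys exactly the diameter slack needed to trap $\psi_x(\overline P)$ strictly interior to $B(x,r_{C\vep_k})$.
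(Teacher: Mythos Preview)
Your argument is correct and rests on the same idea as the paper's: show that every $x\in B_k$ lies within roughly $r_{\vep_k}$ of a point of $U_{\vep_k}$, then apply the H\"older estimate \eqref{Holder} to bound $m(D_xf)$ by a constant times $\vep_k$.

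The execution differs in one respect worth noting. You assume $m(D_xf)\ge C\vep_k$ and then propagate this lower bound to a ball of radius $r_{C\vep_k}$ via \eqref{Holder}, which is why you need the scale bump from $\vep_k$ to $C\vep_k$ (and the attendant threshold $k_0$). The paper instead assumes, for contradiction, that $B(x_0,r_{\vep_k})\cap U_{\vep_k}=\varnothing$; this hypothesis already gives $m(D_yf)\ge\vep_k$ for every $y$ in the ball, so no H\"older propagation and no scale bump are needed, and \eqref{contain} applies at the original scale $\vep_k$ to produce a full component $Q_0\subset B(x_0,r_{\vep_k})$ of $f^{-1}(\Gamma_k(fx_0))$ through $x_0$. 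Since $Q=Q_0$, this contradicts $Q\cap U_{\vep_k}\ne\varnothing$. Your explicit inverse-branch construction and clopen argument are essentially a careful unpacking of what the paper's invocation of \eqref{contain} is doing under the hood; the paper's phrasing is more compressed, yours more transparent about why the local section exhausts the connected component.
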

 \begin{proof}
We claim that for any $Q\in P^{-1, c},$ where $P\in\Ga_k,$ such that $Q\cap U_{\vep_k}\neq\varnothing,$ it holds that 
\begin{eqnarray}\label{non-empty}
B(x, r_{\vep_k})\cap U_{\vep_k}\neq \varnothing,\quad\forall x\in Q.
\end{eqnarray}
For otherwise, if for some $x_0\in Q$ such that $B(x_0, r_{\vep_k})\cap U_{\vep_k}= \varnothing,$ then 
by \eqref{contain}, $B(x_0, r_{\vep_k})$ contains certain preimage component of $f^{-1}\big(\Ga_k(fx_0)\big),$ say $Q_0,$ such that $x_0\in Q_0$. Hence, 
$Q=Q_0.$ This contradicts with  $Q\cap U_{\vep_k}\neq\varnothing.$

Now, together with  \eqref{Holder},  \eqref{non-empty} yields
\[m(D_xf)\le \vep_k+K r_{\vep_k}^{\alpha}\le C_22^{-k\beta},\quad\forall x\in B_k,\] 
for some constant $C_2>0.$ Hence, $B_k\subset U_{C_22^{-k\beta}}.$
 \end{proof}

Define the {\it pullback partition} of  $\Gamma_k,$ denoted by $\Gamma_k^{-1,c}$,  as \[\Gamma^{-1, c}_k:= \{B_k\}\bigcup_{P\in \Gamma_k} \{Q\in P^{-1, c}: Q\cap U_{\vep_k}= \emptyset\}.\]
We note that $\Ga^{-1,c}_k$ is a {\it finite approximation} of the  measurable partition $\eps|_{f^{-1}\eps}$ in such a way that all the preimages away from  $\Sigma_f$ are separated by different components $Q\in P^{-1,c},$ while those close to $\Sigma_f$ are collected by $B_k;$ see Figure \ref{fig:partition_def}(B). We call $B_k$ the {\it degenerate component} of $\Ga_k^{-1,c}.$
In the following, we shall use the conditional entropy
$H_\mu(\Ga_k^{-1,c}|f^{-1}\Ga_k)$ to 
approximate the folding entropy 
through the refining process $k\to\infty.$ 
\begin{figure}
\includegraphics[height=3.8cm]{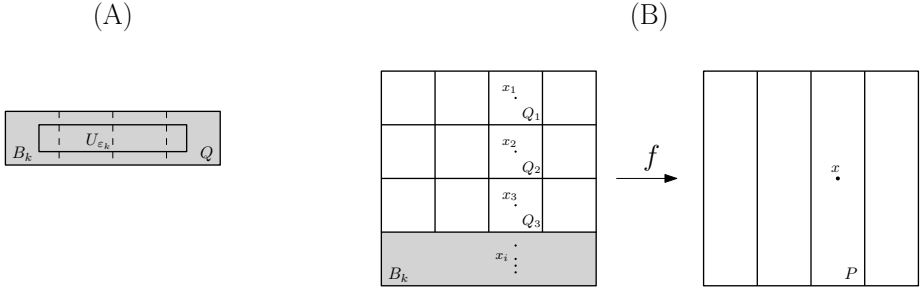}
\caption{(A) The degenerate component $B_k$ consists of all the  components $Q\in\Ga_k$ (denoted by dashed line) that intersect with $U_{\vep_k}.$ (B) The left panel denotes the pullback partition $\Ga_k^{-1,c};$  the right panel denotes the partition $\Ga_k.$ For a typical $P\in\Gamma_k,$  $x\in P,$ the preimages of $x,$ $x_i (i=1,2,3)$  lie in the different components $Q_i\in P^{-1,c} ( i=1,2,3).$ The other preimages  $x_i (i=4,\cdots)$ are all collected by the degenerate component $B_k\in\Ga_k^{-1,c}.$ }
	\label{fig:partition_def}
\end{figure}

\subsection{Approximating the folding  entropy}\,\smallskip
Up to the end of this section, 
we consider measures  $\nu\in\cP(M)$ for  certain degenerate rate $\bar\eta.$ Then it holds  that
\begin{eqnarray*}
\Big|\int \log|\Jac(D_xf)|d\nu(x)\Big|<\infty.
\end{eqnarray*}
Hence, $\nu(B_k)\to0$ as $k\to\infty.$
By construction,  for $\nu$-a.e. $x,$
$\Gamma^{-1, c}_k$ is increasing  to $\epsilon|_{(f^{-1}\eps)(x)}$ as $k\to\infty$. 
Therefore, 
\begin{eqnarray}\label{folding_estimat}
F_f(\nu) &=& \lim_{k\to +\infty}H_{\mu}(\Gamma^{-1, c}_k\mid f^{-1}\epsilon)\nonumber\\[2mm]
&\le&  \limsup_{k\to +\infty} H_{\mu}(\Gamma^{-1, c}_k\mid f^{-1}\Gamma_k)\nonumber\\[2mm]
&=&\limsup_{k\to\infty}\sum_{ P\in \Gamma_k} \sum_{Q\in\Gamma^{-1, c}_k}-\nu(Q\cap f^{-1}P) \log\Big{(}\frac{\nu(Q\cap f^{-1}P)}{\nu(f^{-1}P)}\Big{)}.\label{sum2}
\end{eqnarray}

For the (upper semi-)continuity analysis later, we shall split the summation in \eqref{folding_estimat} into two parts 
to collect the complexities near and away from $\Sigma_f,$ respectively. 
First, we note that for any $Q\in\Ga_k^{-1,c},$ either $Q=B_k,$ or $Q\cap B_k=\varnothing,$ 
where in the latter case, $f|_Q$ is a diffeomorphism such that 
$fQ=P$ for certain $P\in\Ga_k.$
Now according to whether $Q$ equal to $B_k$ or not, the RHS of \eqref{sum2} is split into two parts:
\begin{eqnarray*}
\Delta_k^{(1)}(\nu)&=&\sum_{ P\in \Gamma_k}-\nu(B_k\cap f^{-1}P) \log\Big{(}\dfrac{\nu(B_k\cap f^{-1}P)}{\nu(f^{-1}P)}\Big{)},\\
\Delta_k^{(2)}(\nu)&=&\sum_{\substack{Q\in\Gamma^{-1, c}_k\setminus \{B_k\},\\P=fQ}}-\nu(Q) \log\Big{(}\dfrac{\nu(Q)}{\nu(f^{-1}P)}\Big).
\end{eqnarray*}
Then alternatively,  we have 
\begin{eqnarray}\label{folding_ineq}
F_f(\nu)\le\limsup_{k\to\infty}\big(\Delta_k^{(1)}(\nu)+\Delta_k^{(2)}(\nu)\big).
\end{eqnarray}
In the following, we will analyse $\Delta_k^{(1)}$ and $\Delta_k^{(2)},$ respectively. 

\subsubsection{Estimation of $\Delta_k^{(1)}$}
This subsection shows that  as $k$ increases, the complexities collected by the degenerate component $B_k,$  $\Delta_k^{(1)},$ is uniformly small 
on any $\cP_{\bar\eta}(f).$
First, we present a technical lemma which establishes the relationship between  the degeneracy of maps and the   degeneracy of   measures.

\begin{lemma}\label{lem:technical}
For any $k\ge1$,  it holds that
\begin{eqnarray*}
k\nu(B_k)\le C_3\nu(B_k)-C_4\int_{B_k}\log|\Jac(D_xf)|d\nu(x),
\end{eqnarray*}
where the constants $C_3,C_4>0$ are independent of $k$ and $\nu.$
\end{lemma}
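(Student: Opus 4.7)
\medskip

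\noindent\textbf{Proof plan.} The plan is to convert the geometric degeneracy bound from Lemma 3.1 into a quantitative lower bound on $-\log|\Jac(D_xf)|$ pointwise on $B_k$, and then integrate. The whole argument rests on the chain
\[ B_k \subset U_{C_2 2^{-k\beta}} \;\Longrightarrow\; m(D_xf)\le C_2 2^{-k\beta} \;\Longrightarrow\; |\Jac(D_xf)|\le C_5\, 2^{-k\beta}, \]
where the last step uses the elementary linear-algebra inequality that the product of singular values is bounded by the smallest singular value times the $(d-1)$-th power of the operator norm.

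First, I will fix notation: let $d=\dim M$ and let $C=\sup_{x\in T_1}\|D_xf\|$, which is finite since $f$ is $C^r$ on the compact set $\overline{T_1}$. For $x\in B_k$, Lemma 3.1 gives $m(D_xf)\le C_2 2^{-k\beta}$, and writing the singular values $\sigma_1(x)\ge\cdots\ge\sigma_d(x)=m(D_xf)$ one obtains
\[ |\Jac(D_xf)|=\prod_{i=1}^{d}\sigma_i(x)\le \|D_xf\|^{d-1}\cdot m(D_xf)\le C^{d-1} C_2\, 2^{-k\beta}. \]
Taking logarithms and rearranging yields the pointwise bound
\[ k\le \frac{1}{\beta\log 2}\bigl(-\log|\Jac(D_xf)|\bigr)+\frac{\log(C^{d-1}C_2)}{\beta\log 2},\qquad \forall\, x\in B_k. \]

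Next, I integrate this pointwise bound against $\nu$ over $B_k$. This immediately gives
\[ k\,\nu(B_k)\le \frac{1}{\beta\log 2}\int_{B_k}\!\bigl(-\log|\Jac(D_xf)|\bigr)d\nu+\frac{\log(C^{d-1}C_2)}{\beta\log 2}\,\nu(B_k), \]
which is the claimed inequality with $C_4:=1/(\beta\log 2)$ and $C_3:=\log(C^{d-1}C_2)/(\beta\log 2)$. Both constants depend only on $f$ and $M$ (through $d$, $C$, $C_2$, $\beta$), and in particular are independent of $k$ and of $\nu$.

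I do not expect a genuine obstacle here: once Lemma 3.1 is in hand, the estimate is essentially forced by the fact that a single small singular value dominates $\log|\Jac|$. The only point requiring a little care is the uniform boundedness of $\|D_xf\|$, which is the reason for working on the compact tubular neighborhood $\overline{T_1}$ rather than on $M$ alone; this was already arranged in the setup at the start of Section 3.
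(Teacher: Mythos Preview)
Your argument is correct and essentially identical to the paper's: both use Lemma~3.1 to bound $m(D_xf)$ on $B_k$, convert this via the singular-value inequality $|\Jac(D_xf)|\le \|D_xf\|^{n-1}m(D_xf)$ into a pointwise bound $|\Jac(D_xf)|\le (\text{const})\cdot 2^{-k\beta}$, take logarithms, and integrate. The paper writes the norm bound as $L=\max_{x\in T_1}\|D_xf\|$ and the exponent as $N-1$ (since in the setup of Section~3 the map $f$ has already been extended to the tubular neighborhood $T_1\subset\mathbb{R}^N$, so $D_xf$ is an $N\times N$ matrix); your $d=\dim M$ should accordingly be $N$, but this is a cosmetic point and the constants match exactly, with the paper obtaining $C_3=\log(C_2 L^{N-1})/(\beta\log 2)$ and $C_4=1/(\beta\log 2)$.
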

\begin{proof}
By Lemma \ref{lem:B_k}, for any $x\in B_k,$  
\begin{eqnarray*}
		|\Jac(D_xf)|\le C_2L^{N-1}2^{-\beta k},
	\end{eqnarray*}
where $L:=\max_{x\in T_1}\|D_xf\|.$ This can be equivalently written as
\begin{eqnarray*}\label{k}
k\le -\dfrac{1}{\beta\log2}\log\big(|\Jac(D_xf)|/C_2L^{N-1}\big),\quad\forall x\in B_k.
\end{eqnarray*}
Therefore, 
\begin{eqnarray*}
\nu(B_k)k &\leq& -\frac{1}{\beta\log2}\int_{B_k}\log\big(|\Jac(D_xf)| /(C_2L^{N-1})\big)d\nu(x)\\[2mm]&=& \frac{\nu(B_k)\log(C_2L^{N-1})}{\beta\log2}-\frac{1}{\beta\log2}\int_{B_k}\log  |\Jac(D_xf)| d\nu(x).
\end{eqnarray*}
The Lemma is proved by taking $C_3=\frac{\log(C_2L^{N-1})}{\beta\log2}, C_4=\frac{1}{\beta\log2}.$
\end{proof}

The following simple fact, as a consequence of Lemma \ref{lem:technical},   is helpful throughout this section. 
\begin{lemma}\label{lem:uniform_measure} 
For any $\nu\in\cP_{\bar\eta}(f),$ 
$\nu(B_k)\to0$ uniformly as $k\to\infty.$
\end{lemma}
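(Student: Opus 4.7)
The plan is to combine the quantitative bound from Lemma~\ref{lem:technical} with the control afforded by the uniform degenerate rate. Starting from
\[k\nu(B_k)\le C_3\nu(B_k)-C_4\int_{B_k}\log|\Jac(D_xf)|\,d\nu(x),\]
and noting that $\nu(B_k)\le 1$, the whole task reduces to bounding $-\int_{B_k}\log|\Jac(D_xf)|\,d\nu$ by a constant that does not depend on $\nu\in\cP_{\bar\eta}(f)$.

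The main step is to locate $B_k$ inside one of the prescribed neighborhoods $V_m$ of $\Sigma_f$. Lemma~\ref{lem:B_k} already gives $B_k\subset U_{C_22^{-k\beta}}$, so it suffices to show that $U_\vep$ shrinks to $\Sigma_f$ in the Hausdorff sense as $\vep\to 0$. This is a short compactness argument: if some sequence $x_n\in U_{\vep_n}$ with $\vep_n\to 0$ stayed outside a fixed open neighborhood $W\supset\Sigma_f$, a subsequential limit would satisfy $m(D_{\cdot}f)=0$ by continuity of $Df$, hence lie in $\Sigma_f\subset W$, a contradiction. Since $\{V_m\}$ is itself a decreasing sequence of open neighborhoods of $\Sigma_f$ with $d_H(V_m,\Sigma_f)\to 0$, for every $m\ge 1$ there exists $K(m)$ such that $B_k\subset V_m$ for all $k\ge K(m)$.

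With this containment in hand, I would invoke the normalization from remark~(i) of Section~1.1 and assume $|\Jac(D_xf)|\le 1$ on each $V_m$; then $-\log|\Jac(D_xf)|\ge 0$ throughout $V_m$ and for $k\ge K(m)$ monotonicity in the integration domain gives
\[-\int_{B_k}\log|\Jac(D_xf)|\,d\nu\le-\int_{V_m}\log|\Jac(D_xf)|\,d\nu=\Big|\int_{V_m}\log|\Jac(D_xf)|\,d\nu\Big|\le \eta_m,\]
where the last inequality is precisely the defining property of $\nu\in\cP_{\bar\eta}(f)$. Plugging this back into the bound of Lemma~\ref{lem:technical} yields
\[\nu(B_k)\le\frac{C_3+C_4\eta_m}{k}\quad\text{for every }k\ge K(m),\]
and since the right-hand side is independent of the particular $\nu\in\cP_{\bar\eta}(f)$, fixing any $m$ (say $m=1$) already establishes the claimed uniform convergence $\nu(B_k)\to 0$ as $k\to\infty$.

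The only substantive step is the purely geometric identification $B_k\subset V_{m(k)}$ for some $m(k)\to\infty$; once that is settled, the uniform degenerate rate converts directly into the $\nu$-independent upper bound that Lemma~\ref{lem:technical} requires, and the rest is a one-line computation. Care is only needed at the stage of controlling the sign of $\log|\Jac|$, which is handled cleanly by the $|\Jac|\le 1$ normalization on each $V_m$.
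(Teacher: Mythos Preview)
Your proposal is correct and follows essentially the same route as the paper: both invoke Lemma~\ref{lem:technical} and then observe that the integral $\big|\int_{B_k}\log|\Jac(D_xf)|\,d\nu\big|$ is uniformly small on $\cP_{\bar\eta}(f)$. The paper's proof is a one-line appeal to this fact (implicitly relying on remark~(ii) that the uniform degenerate rate is independent of the chosen neighborhoods, so $\{B_k\}$ may serve in place of $\{V_m\}$), whereas you spell out the containment $B_k\subset V_m$ via Lemma~\ref{lem:B_k} and a compactness argument; your version is simply more explicit about a step the paper leaves to the reader.
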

\begin{proof}
By Lemma \ref{lem:technical},  we only need to notice that for any $k$ sufficiently large,  the term 
$$\big{|} \int_{B_k}\log|\Jac(D_xf)|d\nu(x)\big{|}$$  
is uniformly small  for  $\nu\in\cP_{\bar\eta}(f)$.  
\end{proof}
	
\medskip	
	
Now, we are prepared to estimate $\Delta_k^{(1)}$. 

\begin{lemma}\label{lem:1,1} For any $\vep>0$,  there exists $k_1\in \mathbb{N}$ such that for any  $\nu\in \mathcal P_{\bar \eta}(f)$ and $k\ge k_1$,   it holds that \[ \Delta_k^{(1)}(\nu) <\vep.\]
\end{lemma}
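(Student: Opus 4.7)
The plan is to express $\Delta_k^{(1)}(\nu)$ as a convex combination of values of $\phi$, apply the concavity half of Proposition~\ref{prop:phi} to collapse the sum over $P$ into a single $\phi$-value of the total mass $\nu(B_k)$, and then invoke Lemma~\ref{lem:uniform_measure} to drive that mass to zero uniformly over $\mathcal P_{\bar\eta}(f)$.

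First I would reorganize $\Delta_k^{(1)}(\nu)$ by conditioning on $P\in\Gamma_k$. By \eqref{number} the partition $\Gamma_k|_{T_1}$ is finite, and $\{f^{-1}P\}_{P\in\Gamma_k}$ is a partition of $M$ (up to $\nu$-null sets) since $f(M)\subseteq M$. Restricting to the $P$'s with $\nu(f^{-1}P)>0$ and setting
\[
p_P:=\nu(f^{-1}P),\qquad q_P:=\frac{\nu(B_k\cap f^{-1}P)}{\nu(f^{-1}P)}\in[0,1],
\]
we obtain $\sum_P p_P=1$ and
\[
\Delta_k^{(1)}(\nu)=\sum_P p_P\,\phi(q_P).
\]
Applying Proposition~\ref{prop:phi} with weights $\{p_P\}$ and values $\{q_P\}$ gives
\[
\Delta_k^{(1)}(\nu)\;\le\;\phi\Big(\sum_P p_P q_P\Big)\;=\;\phi\Big(\sum_P\nu(B_k\cap f^{-1}P)\Big)\;=\;\phi(\nu(B_k)).
\]

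Second, by Lemma~\ref{lem:uniform_measure}, $\nu(B_k)\to 0$ uniformly over $\nu\in\mathcal P_{\bar\eta}(f)$ as $k\to\infty$. Since $\phi$ is continuous on $[0,1]$ with $\phi(0)=0$, given $\vep>0$ there exists $k_1$ such that $\phi(t)<\vep$ for all $t\in[0,\nu(B_{k_1})]$ with $\nu(B_{k_1})$ as small as needed, uniformly in $\nu\in\mathcal P_{\bar\eta}(f)$. Hence $\Delta_k^{(1)}(\nu)\le\phi(\nu(B_k))<\vep$ for every $\nu\in\mathcal P_{\bar\eta}(f)$ and $k\ge k_1$, which is the desired bound.

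The substantive content is therefore not in this lemma but in the two preceding lemmas: Lemma~\ref{lem:technical} relates the geometric degeneracy rate $2^{-k\beta}$ to the mass $\nu(B_k)$, and Lemma~\ref{lem:uniform_measure} converts the uniform control hypothesis in the definition of $\mathcal P_{\bar\eta}(f)$ into uniform smallness of $\nu(B_k)$. Once these are in hand, the only step that requires any attention is the Jensen reduction, whose validity hinges on the observation that the preimage sets $\{f^{-1}P\}_{P\in\Gamma_k}$ genuinely partition a full-measure subset of $M$, so that $\{p_P\}$ is an honest probability distribution rather than a subprobability one.
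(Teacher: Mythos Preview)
Your proof is correct, and in fact cleaner than the paper's own argument. Both proofs rest on Proposition~\ref{prop:phi} and on the uniform smallness of $\nu(B_k)$, but they apply the concavity in different places.

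The paper first discards the denominator via $\log\nu(f^{-1}P)\le 0$, reducing to the unconditional entropy $\sum_P\phi\big(\nu(B_k\cap f^{-1}P)\big)$; because $\#\Gamma_k$ grows like $2^{kN}$, this forces them to control the product $k\,\nu(B_k)$, which in turn requires invoking Lemma~\ref{lem:technical} explicitly to convert $k\,\nu(B_k)$ into the Jacobian integral over $B_k$. Your route keeps the conditional structure and applies Jensen with weights $p_P=\nu(f^{-1}P)$ and arguments $q_P=\nu(B_k\cap f^{-1}P)/\nu(f^{-1}P)$, yielding directly $\Delta_k^{(1)}(\nu)\le\phi(\nu(B_k))$. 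This is a strictly sharper bound (it is exactly the paper's term $-\nu(B_k)\log\nu(B_k)$ without the extra $k\,\nu(B_k)$ contribution), and it lets you conclude using only Lemma~\ref{lem:uniform_measure}, with Lemma~\ref{lem:technical} entering only indirectly through that lemma. The paper's longer detour is not needed here, though the $k\,\nu(B_k)$ estimate from Lemma~\ref{lem:technical} does get reused in the proof of Lemma~\ref{lem:(2,2)}, so the machinery is not wasted globally.
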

\begin{proof} 
First,  by noting that $\log\nu(f^{-1}P)\le0,$ we have
 \begin{eqnarray}
	\Delta_k^{(1)}(\nu)	
	&\le&\sum_{ P\in \Gamma_k}-\nu(B_k\cap f^{-1}P) \log\nu(B_k\cap f^{-1}P)\nonumber\\
	&=&\nu(B_k)\sum_{ P\in \Gamma_k}\phi\Big{(}\dfrac{\nu(B_k\cap f^{-1}P)}{\nu(B_k)}\Big{)}-\nu(B_k)\log\nu(B_k).\label{ineq}
\end{eqnarray}
Applying Proposition \ref{prop:phi} with $\{p_i\}=\Big\{\dfrac{\nu(B_k\cap f^{-1}P)}{\nu(B_k)}\Big\}_{P\in\Ga_k},$ 
we have
\begin{eqnarray*}&&\sum_{ P\in \Gamma_k}\phi\Big{(}\dfrac{\nu(B_k\cap f^{-1}P)}{\nu(B_k)}\Big{)}\le\log(\#\Ga_k)\le\log (C_12^{kN}),
\end{eqnarray*}
where the last inequality comes form \eqref{number}.
Hence, it holds that 
\begin{eqnarray*}
\Delta_k^{(1)}(\nu)\le \log(C_1^{1/k}2^N)k\nu(B_k)
-\nu(B_k)\log\nu(B_k),
\end{eqnarray*}
which,  by Lemma \ref{lem:technical},  yields
\begin{eqnarray*}
\Delta_k^{(1)}(\nu)\le \tilde C_3\nu(B_k)-\tilde C_4\int_{B_k}\log|\Jac(D_xf)|d\nu(x)-\nu(B_k)\log\nu(B_k),
\end{eqnarray*}
where the constants $\tilde C_3,\tilde C_4>0$ are independent of $k,\nu.$ 
Since the convergence \[\int_{B_k}\log  |\Jac(D_xf)| d\nu(x)\to0,\quad k\to\infty\]
is uniform  for any $\nu\in \mathcal P_{\bar \eta}(f)$. Then combined with Lemma \ref{lem:uniform_measure}, 
the lemma is proved. 
\end{proof}

\subsubsection{Analysis of $\Delta_k^{(2)}$}\label{subsec:Delta_k^2}
For analyzing $\Delta_k^{(2)}$, it is useful to carefully  investigate 
the partition refining process.  Note that as $i$ increased from $(k-1)$ to $k$, for the partition $\Ga_i^{-1,c},$
two scenarios occur simultaneously: the partition outside $B_{k-1}$ becomes finer; meanwhile, new connected components emerge, which are integrated as a whole by $B_{k-1}$,   from $(B_{k-1}\backslash B_{k})$. 
By such observation, for each $k\ge1$
define 
\begin{eqnarray*}
\Gamma^{-1, c}_{k,1}&=&\Big{\{}Q\in	\Gamma^{-1, c}_{k}:  Q\,\,\text{is contained in some}\,\, Q'\in 	\Gamma^{-1, c}_{k-1}\setminus \{B_{k-1}\}\Big{\}},\\
\Om_{k}&=& \Big{\{}Q\in\Gamma^{-1,c}_{k}\setminus\{B_{k}\}: Q\text{ is contained in } B_{k-1}\Big{\}},
\end{eqnarray*}
see Figure \ref{fig:finer_partition}(A).
\begin{figure}
\includegraphics[height=3.5cm]{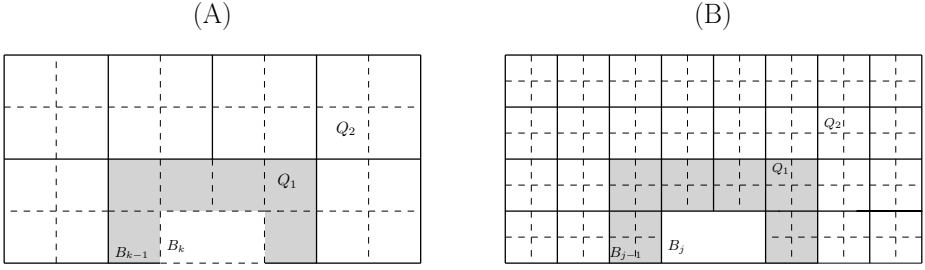}
\caption{(A) The solid line denotes the partition $\Ga_{k-1}^{-1,c};$ the dashed line denotes the refined partition $\Ga_k^{-1,c}.$	The set difference $B_{k-1}\backslash B_k$ (the region in grey) is consists of the new-emerging components $Q_1\in\Om_k.$ Outside $B_{k-1},$ the partition $\Ga_{k-1}^{-1,c}$ is refined by the components $Q_2\in\Gamma_{k,1}^{-1,c}.$
(B) The solid line denotes the partition $\Ga_{j}^{-1,c};$ the dashed line denotes the refined partition $\Ga_k^{-1,c}.$	The set difference $B_{j-1}\backslash B_j$ (the region in grey) is refined by components $Q_1\in\Om'_{k,j}.$ Outside $B_{j-1},$ the partition $\Ga_{j}^{-1,c}$ is refined by  components $Q_2\in\cW_{k,j}.$}
\label{fig:finer_partition}
\end{figure}
Correspondingly, we also split  $\Delta_k^{(2)}$
into two parts: (i)
the {\it refining} complexities outside  $B_{k-1}$, denoted by $\Delta_{k}^{(2,1)}$; (ii) the {\it new} complexities released from  $(B_{k-1}\backslash B_{k}),$ denoted by $\Delta_{k}^{(2,2)},$ as follows:
\begin{eqnarray}
\Delta_{k}^{(2)}(\nu)
&=&\sum_{\substack{Q\in\Gamma^{-1, c}_{k,1},\\ P=fQ }}-\nu(Q) \log\Big{(}\dfrac{\nu(Q)}{\nu(f^{-1}P)}\Big)
+\sum_{\substack{Q\in\Om_{k},\\P=fQ}}-\nu(Q) \log\Big{(}\dfrac{\nu(Q)}{\nu(f^{-1}P)}\Big)\nonumber\\
&=:& \Delta_{k}^{(2,1)}(\nu)+\Delta_{k}^{(2,2)}(\nu).\label{split(2)}
\end{eqnarray}

The remainder of this subsection estimates 
$\Delta_{k}^{(2,2)}$ on any $
\cP_{\bar\eta}(f).$
In  the next subsection,  we will establish the  (non-increasing) monotonicity of $\Delta_{k}^{(2,1)}.$

\begin{lemma}\label{lem:(2,2)}
For any $\vep>0$,  there exists $k_2\in \mathbb{N}$ such that 	for any $k\ge k_2$ and  $\nu\in \cP_{\bar \eta}(f)$,  
	\begin{eqnarray*}
		\sum_{k>k_2}\Delta_{k}^{(2,2)}(\nu)< \vep.
	\end{eqnarray*}
\end{lemma}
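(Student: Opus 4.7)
The plan is to estimate $\sum_{k > k_2} \Delta_k^{(2,2)}(\nu)$ uniformly on $\mathcal P_{\bar\eta}(f)$ via a per-level upper bound that separates the ``newly released'' complexity into a combinatorial part controlled by the refinement cardinality and a self-information part controlled by the total mass, and then to fold both contributions back into the tail $\int_{B_{k_2}}\log|\Jac(D_xf)|\,d\nu$ through the Jacobian estimate of Lemma \ref{lem:B_k}. Set $a_k := \nu(B_{k-1}\setminus B_k)$. Since $\vep_k<\vep_{k-1}$ forces $B_k\subset B_{k-1}$, the family $\Omega_k$ partitions $B_{k-1}\setminus B_k$ and $\sum_{Q\in\Omega_k}\nu(Q)=a_k$. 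Dropping the factor $\nu(f^{-1}P)^{-1}\ge 1$ gives $\Delta_k^{(2,2)}(\nu)\le\sum_{Q\in\Omega_k}\phi(\nu(Q))$; renormalizing the weights inside $\phi$ and applying Proposition \ref{prop:phi} together with $|\Omega_k|\le \#\Gamma_k\le C_1 2^{kN}$ from \eqref{number} yields the per-level estimate
\[
\Delta_k^{(2,2)}(\nu) \le a_k\bigl(kN\log 2+\log C_1\bigr)+\phi(a_k).
\]

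The term $\sum_{k>k_2} k a_k$ is handled exactly as in Lemma \ref{lem:technical}. For $x\in B_{k-1}\setminus B_k\subset B_{k-1}$, Lemma \ref{lem:B_k} gives $|\Jac(D_xf)|\le C_2 L^{N-1}\,2^{-(k-1)\beta}$, hence $k\le \tilde C_3 - \tilde C_4\log|\Jac(D_xf)|$ pointwise on $B_{k-1}\setminus B_k$ for absolute constants $\tilde C_3,\tilde C_4>0$. Since $\{B_{k-1}\setminus B_k\}_{k>k_2}$ is pairwise disjoint and its union covers $B_{k_2}$ modulo a subset of $\Sigma_f$ (which is $\nu$-null by \eqref{finite LE}), integration and summation yield
\[
\sum_{k>k_2} k a_k \le \tilde C_3\,\nu(B_{k_2}) - \tilde C_4\int_{B_{k_2}}\log|\Jac(D_xf)|\,d\nu,
\]
which tends to $0$ as $k_2\to\infty$, uniformly on $\mathcal P_{\bar\eta}(f)$, by the definition of $\mathcal P_{\bar\eta}(f)$ and Lemma \ref{lem:uniform_measure}.

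The non-routine ingredient is the tail $\sum_{k>k_2}\phi(a_k)$, which need not be summable merely from $\sum_k a_k<\infty$. To dominate it I would invoke the Gibbs inequality with a geometric reference distribution: let $S:=\sum_{k>k_2}a_k\le\nu(B_{k_2})$ and $q_k:=2^{-(k-k_2)}$ for $k>k_2$ (so $\sum q_k=1$); the non-negativity of the Kullback--Leibler divergence of $\{a_k/S\}$ against $\{q_k\}$ gives
\[
\sum_{k>k_2}\phi(a_k/S)\le \log 2\sum_{k>k_2}(k-k_2)(a_k/S),
\]
and consequently, via the identity $\sum\phi(a_k)=\phi(S)+S\sum\phi(a_k/S)$,
\[
\sum_{k>k_2}\phi(a_k)\le \phi(S)+(\log 2)\sum_{k>k_2}k a_k.
\]
Combining the three estimates,
\[
\sum_{k>k_2}\Delta_k^{(2,2)}(\nu)\le (N+1)(\log 2)\sum_{k>k_2}k a_k+(\log C_1)\,\nu(B_{k_2})+\phi(\nu(B_{k_2})),
\]
and each summand on the right tends to $0$ uniformly on $\mathcal P_{\bar\eta}(f)$ as $k_2\to\infty$. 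The main obstacle is precisely this last step: a level-by-level handling of $\phi(a_k)$ cannot produce a summable bound from the available information, and the critical observation is that the very same quantity $\sum k a_k$ which controls the combinatorial growth of $\Omega_k$ also dominates the self-information $\sum\phi(a_k)$ once $\{a_k/S\}$ is compared to a geometric prior.
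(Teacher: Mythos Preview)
Your overall architecture matches the paper's: drop the factor $\nu(f^{-1}P)^{-1}$, split $\sum_{Q\in\Omega_k}\phi(\nu(Q))$ into a combinatorial piece $a_k\log\#\Omega_k$ and a self-information piece $\phi(a_k)$, and reduce both to the Jacobian tail. One technical point needs repair: the inequality $|\Omega_k|\le\#\Gamma_k$ is not true in general, since a single $P\in\Gamma_k$ may have many preimage components outside $U_{\vep_k}$. The paper instead observes that every $Q\in\Omega_k$ contains a ball of radius $\gtrsim (2^kL)^{-1}$ (because $f|_Q$ is a diffeomorphism onto some cube of $\Gamma_k$ with $\|Df\|\le L$), so a packing argument gives $\#\Omega_k\le C(2^kL)^N$. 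This is still of the form $\log\#\Omega_k\le C'k+C''$, so your subsequent estimate survives unchanged once this bound is substituted.

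Where your argument genuinely differs from the paper is in the treatment of $\sum_{k>k_2}\phi(a_k)$. The paper splits the index set into $I_1=\{k:-\log a_k\le k\}$ and $I_2=\{k:-\log a_k>k\}$, bounding the $I_1$-contribution by $\sum k a_k$ and the $I_2$-contribution by the convergent series $\sum k e^{-k}$ (using monotonicity of $\phi$ on $(0,e^{-1})$). Your route via the Gibbs inequality against the geometric prior $q_k=2^{-(k-k_2)}$ is cleaner: it yields the single inequality $\sum\phi(a_k)\le\phi(S)+(\log 2)\sum k a_k$ without any case distinction, and it makes transparent that the same quantity $\sum k a_k$ controlling the combinatorial growth also dominates the self-information. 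Both approaches lead to the same uniform tail control, but yours packages the estimate more economically.
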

\begin{proof}  
Since for any $k\ge1$, 
$\cup_{Q\in \Omega_{k}} Q=B_{k-1}\setminus B_{k}$, 
\begin{eqnarray}
	\Delta_k^{(2,2)}(\nu)
	&\le&\sum_{Q\in\Om_k}-\nu(Q)\log\nu(Q)\label{sum1}.
\end{eqnarray}
Similar to \eqref{ineq}, the RHS of \eqref{sum1} can be written as  
\begin{eqnarray*}
	&&\nu(B_{k-1}\setminus B_{k})\sum_{Q\in\Omega_{k}} \phi\Big{(}\frac{\nu(Q)}{\nu(B_{k-1}\setminus B_{k})}\Big{)}-\nu(B_{k-1}\setminus B_{k})\log\nu(B_{k-1}\setminus B_{k}) \\[2mm]
	&=:& \Delta_{k}^{(2,2,1)}(\nu)+\Delta_{k}^{(2,2,2)}(\nu).
\end{eqnarray*} 	
Thus, the estimation of $\Delta_k^{(2,2)}$ is reduced to that  of $\Delta_{k}^{(2,2,1)}$ and $\Delta_{k}^{(2,2,2)},$ respectively. 
	
We use  similar approach 
in Lemma \ref{lem:1,1} to estimate  $\Delta_{k}^{(2,2,1)}.$ 
First, note that each $Q\in\Omega_{k}$ contains a ball with radius at least
$(2^{k+1}L)^{-1}.$ Thus, $\#\Omega_{k}\le \tilde C_1^{N(k+1)}L^N$ 
for some constant $\tilde C_1>0$ independent of $k, L$ and $N$.
Then applying Proposition \ref{prop:phi} with $\{p_i\}=\{\frac{\nu(Q)}{\nu(B_{k-1}\setminus B_{k})}\}_{Q\in\Om_k},$  we have 
\begin{eqnarray}
\Delta_{k}^{(2,2,1)}(\nu)&\le&\log(\#\tilde C_1^{N(k+1)}L^N),
\end{eqnarray}
which, using the estimatition as in  Lemma \ref{lem:technical} with  $B_k$ replaced by $(B_{k}\backslash B_{k-1}),$ yields
\begin{eqnarray*}
\Delta_{k}^{(2,2,1)}(\nu)&\le&\tilde C_3\nu(B_{k-1}\setminus B_{k})-\tilde C_4\int_{B_{k-1}\setminus B_{k}}\log  |\Jac(D_xf)| d\nu(x)
\end{eqnarray*}
where the constants $\tilde C_3,\tilde C_4>0$ are independent of $k$ and $\nu.$
Thus, for any $k_2\in\N,$
\begin{eqnarray}\label{221}
\sum_{k>k_2}\Delta_{k}^{(2,2,1)}(\nu)&\le& \tilde C_3\nu(B_{k_2})-\tilde C_4\int_{B_{k_2}}\log  |\Jac(D_xf)| d\nu(x).
\end{eqnarray}

\medskip

For the estimation of $\Delta_{k}^{(2,2,2)},$ denote 
\begin{eqnarray*}I_1=\big{\{}k\in\N:\,\, k\ge -\log\nu(B_{k}\setminus B_{k-1})\big{\}},\quad 
I_2=\big{\{}k\in\N:\,\, k<-\log\nu(B_{k}\setminus B_{k-1})\big{\}}.
\end{eqnarray*}
Then for any $k_2\in\N,$
\begin{eqnarray*}
\sum_{k>k_2}\Delta_k^{(2,2,2)}&=& \sum_{k\in I_1, k>k_2} -\nu(B_{k}\setminus B_{k-1})\log\nu(B_{k}\setminus B_{k-1})\\[2mm]
&&+
\sum_{k\in I_2, k>k_2} -\nu(B_{k}\setminus B_{k-1})\log\nu(B_{k}\setminus B_{k-1})\\[2mm]
&\le&\sum_{k\in I_1, k>k_2} k\nu(B_{k}\setminus B_{k-1})+\sum_{k\in I_2 , k>k_2} ie^{-i}.
\end{eqnarray*}
Furthermore, applying Lemma \ref{lem:technical}, we have  
\begin{eqnarray}\label{222}
\sum_{k>k_2}\Delta_k^{(2,2,2)}&\le&C_3\nu(B_{k_2})-C_4\int_{B_{k_2}}\log  |\Jac(D_xf)| d\nu(x)+\sum_{k\in I_2 , k>k_2} ke^{-k}. 
\end{eqnarray}

Note that  $\sum_{k>k_2} ke^{-k}\to 0$ as $k_2\to\infty$. Also, as $k_2\to\infty,$
both $\int_{B_{k_2}}\log|\Jac(D_xf)|d\nu(x)$ and $\nu(B_{k_2})$ converge to zero
uniformly on $\cP_{\bar\eta}(f)$.
Then combing \eqref{221} and \eqref{222}, the lemma is proved.
\end{proof}

\subsubsection{Monotonicity by refining partitions}
In this subsection, we demonstrate the (non-increasing) monotonicity for complexities outside any  degenerate component $B_{k}\in\Ga_k^{-1,c}$
during the partition refining process. We remark that distinct from the previous two subsections where the uniform estimations  
require the measures to  
admit a uniform degenerate rate,  the monotonicity analysis in this subsection applies to all measures in $\cP(M).$

For any $1\le j\le k$,  define
\begin{eqnarray*}
\cW_{k,j}&=&\Big\{Q\in \Gamma^{-1,c}_{k}\backslash \{B_{k}\}: Q\subseteq Q' \text{\ for\ some \ } Q'\in\Gamma^{-1,c}_{j}\backslash\{B_{j}\}\Big\},\\
\Omega'_{k,j}&=&\Big{\{}Q\in\Gamma^{-1, c}_{k}\setminus\{B_{k}\}: Q \subseteq Q'\,\,\text{for some}\,Q'\in \Omega_{j} \Big{\}}.
\end{eqnarray*}
In the same spirit as  $\Gamma_{k,1}^{-1,c}$ and $\Om_k$ in section \ref{subsec:Delta_k^2},  both $\cW_{k,j}$ 
and $\Ga_{k,1}^{-1,c}$ 
are to refine the elements in 
$\Ga_j^{-1,c}\backslash\{B_j\}$
by using 
the more refined partition $\Ga_k^{-1,c}$;  see Figure \ref{fig:finer_partition}(B).
In particular, $\cW_{k,k-1}=\Gamma_{k,1}^{-1,c},$ $\Om'_{k,k}=\Om_k.$
We denote the respective complexities captured by $\cW_{k,j}$ and $\Omega'_{k,j}$ as
\begin{eqnarray*}
I_{k,j}(\nu)=\sum_{\substack{Q\in\cW_{k,j},\\P=fQ}}-\nu(Q) \log\Big{(}\dfrac{\nu(Q)}{\nu(f^{-1}P)}\Big),\quad
I'_{k,j}(\nu)=\sum_{\substack{Q\in\Om'_{k,j},\\P=fQ}}-\nu(Q) \log\Big{(}\dfrac{\nu(Q)}{\nu(f^{-1}P)}\Big).
\end{eqnarray*}
As a natural linkage to $\Delta_{k}^{(2)}$,
it is not hard to see the following. 
\begin{pro}\label{prop:complex_split}
Fix any $k_0\in\N.$ Then for any $k>k_0$ and  $\nu\in\cP(M),$
\begin{eqnarray*}
\Delta_k^{(2)}(\nu)=I_{k,k_0}(\nu)+\sum_{j=k_0+1}^{k}I'_{k,j}(\nu).
\end{eqnarray*}
In particular, \eqref{split(2)} is yielded by taking $k_0=k-1,$ i.e., 
\begin{eqnarray*}
\Delta_k^{(2)}(\nu)=I_{k,k-1}(\nu)+I'_{k,k}(\nu),
\end{eqnarray*}
where $I_{k,k-1}=\Delta_k^{(2,1)}, I'_{k,k}=\Delta_k^{(2,2)}.$
\end{pro}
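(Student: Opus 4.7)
The plan is to decompose the index set $\Gamma_k^{-1,c}\setminus\{B_k\}$ defining $\Delta_k^{(2)}(\nu)$ as the disjoint union of $\cW_{k,k_0}$ and the $\Omega'_{k,j}$ for $k_0<j\le k$, and then split the defining sum accordingly; no measure-theoretic input is needed, only the set-theoretic structure of the pullback partitions.

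First I would verify the decreasing property $B_k\subseteq B_{k-1}$. Any $Q\in\Gamma_k^{-1,c}$ meeting $U_{\varepsilon_k}$ is a connected component of $f^{-1}(P)$ for some $P\in\Gamma_k$; picking the unique $P'\in\Gamma_{k-1}$ with $P\subseteq P'$, the component $Q$ sits inside some connected component $Q'$ of $f^{-1}(P')$, and since $Q'$ meets $U_{\varepsilon_k}\subseteq U_{\varepsilon_{k-1}}$, we get $Q'\subseteq B_{k-1}$, whence $Q\subseteq B_{k-1}$. The same refinement argument delivers the key ``either/or'' at any two scales $j\le k$: every $Q\in\Gamma_k^{-1,c}$ satisfies either $Q\subseteq B_j$, or $Q$ is contained in a unique element of $\Gamma_j^{-1,c}\setminus\{B_j\}$.

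With this in hand, fix $Q\in\Gamma_k^{-1,c}\setminus\{B_k\}$. If $Q\not\subseteq B_{k_0}$, then the either/or at scale $k_0$ forces $Q\in\cW_{k,k_0}$. Otherwise $Q\subseteq B_{k_0}$, and since $Q$ is disjoint from $B_k$ there is a unique smallest $j\in\{k_0+1,\ldots,k\}$ with $Q\not\subseteq B_j$, so in particular $Q\subseteq B_{j-1}$. Applying the either/or at scale $j$ produces a unique $Q^*\in\Gamma_j^{-1,c}\setminus\{B_j\}$ containing $Q$. Since $Q\subseteq Q^*$ and $Q\subseteq B_{j-1}$, and since distinct elements of the partition $\Gamma_{j-1}^{-1,c}$ are disjoint, $Q^*$ cannot straddle $B_{j-1}$ and its complement, so $Q^*\subseteq B_{j-1}$. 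Hence $Q^*\in\Omega_j$ and $Q\in\Omega'_{k,j}$. The choice of $j$ being unique, this yields the disjoint decomposition
\begin{eqnarray*}
\Gamma_k^{-1,c}\setminus\{B_k\}=\cW_{k,k_0}\,\sqcup\,\bigsqcup_{j=k_0+1}^{k}\Omega'_{k,j}.
\end{eqnarray*}

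Splitting the sum defining $\Delta_k^{(2)}(\nu)$ along this partition and recognising the resulting sub-sums as $I_{k,k_0}(\nu)$ and $I'_{k,j}(\nu)$ produces the asserted identity. Specializing $k_0=k-1$ collapses $\cW_{k,k-1}$ to $\Gamma_{k,1}^{-1,c}$ and $\Omega'_{k,k}$ to $\Omega_k$, thereby recovering $\Delta_k^{(2)}=\Delta_k^{(2,1)}+\Delta_k^{(2,2)}$. The only slightly delicate step is the disjointness argument giving $Q^*\subseteq B_{j-1}$; once that is isolated, everything else is routine bookkeeping independent of the measure $\nu$.
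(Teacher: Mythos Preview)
Your proof is correct and supplies precisely the combinatorial bookkeeping the paper omits: the paper states this proposition as ``not hard to see'' without proof, and your disjoint decomposition of $\Gamma_k^{-1,c}\setminus\{B_k\}$ into $\cW_{k,k_0}$ and the $\Omega'_{k,j}$ is exactly the intended mechanism. The only place worth tightening slightly is the step $Q^*\subseteq B_{j-1}$: rather than just invoking disjointness of elements of $\Gamma_{j-1}^{-1,c}$, you should state explicitly that $\Gamma_j^{-1,c}$ refines $\Gamma_{j-1}^{-1,c}$ (which follows from your first paragraph), so that $Q^*$ lies in a single element of $\Gamma_{j-1}^{-1,c}$, which must then be $B_{j-1}$ since $Q\subseteq Q^*\cap B_{j-1}$ is nonempty.
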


\begin{lemma}\label{lem:monotone}
Given any $\nu\in\cP(M)$ and $j\in\N.$ Then for any $k\ge j,$ $I_{k,j}(\nu)$ ({\it resp.} $I'_{k,j}(\nu)$) is non-increasing with respect to $k.$ In particular, $I'_{k,j}(\nu)\le I'_{j,j}(\nu)=\Delta_{j}^{(2,2)}(\nu).$
\end{lemma}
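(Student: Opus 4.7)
The plan is to derive the single-step monotonicity $I_{k+1,j}(\nu) \le I_{k,j}(\nu)$ from the log-sum inequality
\begin{eqnarray*}
\sum_l a_l \log\frac{a_l}{b_l} \ge \Big(\sum_l a_l\Big) \log\frac{\sum_l a_l}{\sum_l b_l}, \qquad a_l\ge0, \ b_l>0,
\end{eqnarray*}
applied one refinement at a time, and then iterate. The inequality encodes exactly how $-p\log(p/q)$ quantities behave when numerator and denominator measures are simultaneously refined.

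The first step I would carry out is a combinatorial description of how $\cW_{k+1,j}$ refines $\cW_{k,j}$. Fix $Q \in \cW_{k,j}$ with $fQ = P \in \Gamma_k$. As noted before the introduction of $\Delta_k^{(2)}$, $f|_Q : Q \to P$ is a diffeomorphism, since $Q \subseteq G_{\vep_k}$ makes $f|_Q$ a local diffeomorphism, $P$ is a simply connected rectangle, and $Q$ is a connected component of $f^{-1}P$. For each $\tilde P \in \Gamma_{k+1}$ with $\tilde P \subseteq P$, set $\tilde Q := (f|_Q)^{-1}(\tilde P)$. A clopen argument inside $f^{-1}\tilde P$ shows $\tilde Q$ is a connected component of $f^{-1}\tilde P$; since $U_{\vep_{k+1}} \subseteq U_{\vep_k}$ and $Q \cap U_{\vep_k} = \emptyset$, one has $\tilde Q \cap U_{\vep_{k+1}} = \emptyset$; and $\tilde Q \subseteq Q \subseteq Q'$ for the same ancestor $Q' \in \Gamma_j^{-1,c}\setminus\{B_j\}$, so $\tilde Q \in \cW_{k+1,j}$. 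Conversely, the same clopen argument applied at scale $j$ (using $U_{\vep_k}\subseteq U_{\vep_j}$) shows every $\tilde Q \in \cW_{k+1,j}$ with $f\tilde Q = \tilde P$ arises as $(f|_Q)^{-1}(\tilde P)$ for a unique $Q \in \cW_{k,j}$ containing it. Thus $\cW_{k+1,j}$ is exactly the refinement of $\cW_{k,j}$ pulled back from $\Gamma_{k+1}$.

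Next, for each fixed $Q \in \cW_{k,j}$ with $fQ = P$, I would list the pieces $\{\tilde P_l\} = \{R \in \Gamma_{k+1} : R \subseteq P\}$ and their pullbacks $\tilde Q_l = (f|_Q)^{-1}(\tilde P_l)$. Then $\nu(Q) = \sum_l \nu(\tilde Q_l)$ and $\nu(f^{-1}P) = \sum_l \nu(f^{-1}\tilde P_l)$, and applying the log-sum inequality with $a_l = \nu(\tilde Q_l)$, $b_l = \nu(f^{-1}\tilde P_l)$ yields
\begin{eqnarray*}
\sum_l -\nu(\tilde Q_l)\log\frac{\nu(\tilde Q_l)}{\nu(f^{-1}\tilde P_l)} \ \le\ -\nu(Q)\log\frac{\nu(Q)}{\nu(f^{-1}P)}.
\end{eqnarray*}
Summing over $Q \in \cW_{k,j}$ gives $I_{k+1,j}(\nu) \le I_{k,j}(\nu)$; iterating yields non-increase for all $k \ge j$. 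The argument for $I'_{k,j}$ is identical after replacing $\cW$ by $\Omega'$, since $\Omega_j \subseteq \Gamma_j^{-1,c}\setminus\{B_j\}$ makes the same component bookkeeping applicable. Finally, $\Omega'_{j,j} = \Omega_j$ follows directly from the definition, giving $I'_{j,j}(\nu) = \Delta_j^{(2,2)}(\nu)$.

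The main obstacle is combinatorial rather than analytic: one must carefully track the connected-component structure across the joint refinements of $\Gamma_k$ and $\Gamma_k^{-1,c}$, in particular verifying that the property of lying inside a non-degenerate $\Gamma_j^{-1,c}$-ancestor propagates correctly both up and down the scale $k$. Once that is established, the monotonicity follows from one application of the log-sum inequality per refinement step. Notably, no uniform degenerate rate hypothesis enters the argument, consistent with the lemma applying to every $\nu \in \cP(M)$ in contrast with the preceding subsections.
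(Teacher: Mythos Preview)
Your proposal is correct and follows essentially the same route as the paper. The paper also reduces to the one--one correspondence between the $\Gamma_{k'}$-refinement of $P=fQ$ and the $\cW_{k',j}$-refinement of $Q$ (via the diffeomorphism $f|_Q$), and then applies the concavity of $\phi(x)=-x\log x$ (Proposition~\ref{prop:phi}), which is exactly your log-sum inequality in disguise; the only cosmetic differences are that the paper compares arbitrary $k'<k$ directly rather than iterating a single step, and it is terser on the combinatorial bookkeeping that you spell out more carefully.
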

\begin{proof}
We only  prove the non-increasing monotonicity for $I_{k,j}.$ The same argument applies to $I'_{k,j}$ as well. 
A key observation is that for any $k'>k\ge j,$
the partitions $\Ga_{k'}^{-1,c}$ and $\Ga_{k'}$  refine the respective $\Ga_{k}^{-1,c}$ and $\Ga_{k}$  in a consistent manner.  More specifically, 
for any $Q\in\cW_{k,j},$ 
since $f|_Q$ is a diffeomorphism, 
then there is a one-one correspondence, through $f,$  between the refining of $P$ by $\Ga_{k'}$ and the refining of $Q$ by $\cW_{k',j}$; see Figure \ref{fig:mono_refin}.
\begin{figure}
\includegraphics[height=3.5cm]{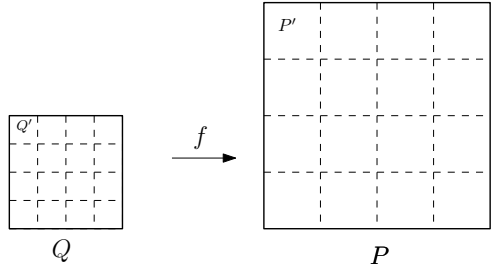}
\caption{The component $Q\in\cW_{k,j}$ ({\it resp.} $P\in\Ga_k$) is refined by components $Q'\in\cW_{k',j}$ ({\it resp.} $P'\in\Ga_{k'}$) in a one-one corresponding way such that each $P'=fQ'.$}
\label{fig:mono_refin}
\end{figure}
Thus,  $I_{k',j}$ can be written as
\begin{eqnarray}
I_{k',j}&=&
\sum_{Q\in\cW_{k,j}}\sum_{\substack{Q'\in\cW_{k',j},\\Q'\subseteq Q, P'=fQ'}}\nu(f^{-1}P')\phi\Big(\dfrac{\nu(Q')}{\nu(f^{-1}P')}\Big)\nonumber\\
&=&\sum_{\substack{Q\in\cW_{k,j},\\P=fQ}}\nu(f^{-1}P)\sum_{\substack{Q'\in\cW_{k',j},\\ Q'\subseteq Q, P'=fQ'}}\dfrac{\nu(f^{-1}P')}{\nu(f^{-1}P)}\phi\Big(\dfrac{\nu(Q')}{\nu(f^{-1}P')}\Big)\label{sum}.\label{sum3}
\end{eqnarray}

Since $\{P'\}$  is a refinement of $P$, applying Proposition \ref{prop:phi}  with $\{p_i\}=\{\frac{\nu(f^{-1}P')}{\nu(f^{-1}P)}\}_{P'},$ $\{x_i\}=\{\frac{\nu(Q')}{\nu(f^{-1}P')}\}_{Q'},$ for the first summation in \eqref{sum3}, we have
\begin{eqnarray*}
\sum_{\substack{Q'\in\cW_{k',j},\\ Q'\subseteq Q, P'=fQ'}}\dfrac{\nu(f^{-1}P')}{\nu(f^{-1}P)}\phi\Big(\dfrac{\nu(Q')}{\nu(f^{-1}P')}\Big)\le \phi\Big(\sum_{\substack{Q'\in\cW_{k',j},\\ Q'\subseteq Q,}}\dfrac{\nu(Q')}{\nu(f^{-1}P)}\Big)=\phi\Big(\dfrac{\nu(Q)}{\nu(f^{-1}P)}\Big). 
\end{eqnarray*}
Therefore, 
\begin{eqnarray*}
I_{k',j}\le\sum_{\substack{Q\in\cW_{k,j},\\P=fQ}}\nu(f^{-1}P)\phi\Big(\dfrac{\nu(Q)}{\nu(f^{-1}P)}\Big)=\sum_{\substack{Q\in\cW_{k,j},\\P=fQ}}-\nu(f^{-1}P)\log\Big(\dfrac{\nu(Q)}{\nu(f^{-1}P)}\Big)=I_{k,j}. 
\end{eqnarray*}
\end{proof}

Fixing  any  $k_0\in\N,$ by Lemma \ref{lem:monotone},  it is well-defined to put
\begin{eqnarray*}\label{limit_I}
I_{k_0}(\nu)=\lim\limits_{k\rightarrow\infty}I_{k,k_0}(\nu)=\inf\limits_{k\geq k_0}I_{k,k_0}(\nu).
\end{eqnarray*}
For each $k\ge k_0,$ since $\cW_{k,k_0}$  only  contains the components of $\Ga_{k}^{-1,c}$ not intersecting with the degenerate component $B_{k_0}\in\Ga_{k_0}^{-1,c}$.
Then as $k$ increases, $I_{k,k_0}$ approximates only the part of complexities in folding entropy  away from $\Sigma_f$.
Therefore, 
\begin{eqnarray}\label{folding inequality}
I_{k_0}(\nu)\le F_f(\nu),\quad\forall \nu\in\cP(M).
\end{eqnarray}
Also, as each $I_{k,k_0}$ is a finite summation, we directly obtain the  upper semi-continuity property  of $I_{k_0}$ on $\cP(M).$ 

 \begin{pro}\label{prop: u.s.c I}
For each $k\ge1,$ $I_k(\cdot)$ is upper semi-continuous on $\cP(M).$
\end{pro}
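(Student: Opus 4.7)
The plan is to write $I_{k_0}(\nu)=\inf_{k\ge k_0}I_{k,k_0}(\nu)$, as supplied by Lemma~\ref{lem:monotone}, and then invoke the standard fact that a pointwise infimum of upper semi-continuous functions is itself upper semi-continuous. The entire proposition thus reduces to showing that, for each fixed $k\ge k_0$, the functional $\nu\mapsto I_{k,k_0}(\nu)$ is upper semi-continuous on $\cP(M)$.

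The first observation is that $\cW_{k,k_0}$ is a \emph{finite} family. Every $Q\in\cW_{k,k_0}$ sits inside some non-degenerate component $Q'\in\Gamma_{k_0}^{-1,c}\setminus\{B_{k_0}\}$, so $Q\subseteq M\setminus U_{\vep_{k_0}}$, on which $f$ is a local diffeomorphism with $m(Df)\ge\vep_{k_0}/2$. Combined with the finiteness bound $\#\Gamma_k\le C_12^{kN}$ from \eqref{number} and compactness of $M$, only finitely many connected preimage components can arise. Consequently $I_{k,k_0}(\nu)$ is a genuine finite sum of terms of the form $-\nu(Q)\log(\nu(Q)/\nu(f^{-1}fQ))$.

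For the upper semi-continuity of each $I_{k,k_0}$, I would group the sum by image box, writing
\[
I_{k,k_0}(\nu)=\sum_{P\in\Gamma_k}\nu(f^{-1}P)\sum_{\substack{Q\in\cW_{k,k_0}\\fQ=P}}\phi\!\Big(\frac{\nu(Q)}{\nu(f^{-1}P)}\Big),
\]
and exploit three ingredients: (a) the two-variable function $g(p,q):=-p\log(p/q)$ extends continuously to $\{0\le p\le q\le 1\}$ with $g(0,q):=0$, and is bounded by $1/e$ there; (b) openness of each $Q$ and of each $f^{-1}P$ gives lower semi-continuity of $\nu\mapsto\nu(Q)$ and $\nu\mapsto\nu(f^{-1}P)$ under weak-$^*$ convergence via the Portmanteau theorem; and (c) the total-mass identity $\sum_{P\in\Gamma_k}\nu(f^{-1}P)=1$ forces, along any weak-$^*$ convergent subsequence, the one-sided Portmanteau inequalities to become equalities modulo $\nu$-null boundary mass. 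Combining (a)--(c) with finiteness of $\cW_{k,k_0}$ delivers the upper semi-continuity of $I_{k,k_0}$ on $\cP(M)$, and passing to the pointwise infimum over $k\ge k_0$ then yields the conclusion.

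I expect the delicate step to be the upper semi-continuity of each finite sum $I_{k,k_0}$, rather than the final infimum argument. Because $g(\cdot,q)$ is concave with interior maximum at $p=q/e$, and hence not monotone, the one-sided Portmanteau inequalities for the individual $\nu(Q)$ do not transfer term-by-term through $\phi$. The $P$-grouping together with the total-mass identity is what allows one to convert lower semi-continuity of the arguments into upper semi-continuity of the whole sum, and this conversion crucially uses finiteness of $\cW_{k,k_0}$ to avoid any infinite-dimensional pathology that would arise if one tried to carry out the same argument directly on the limit partition $\epsilon\mid f^{-1}\eps$.
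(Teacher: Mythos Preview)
Your overall strategy---write $I_{k_0}=\inf_{k\ge k_0}I_{k,k_0}$ and reduce to upper semi-continuity of each finite sum---matches the paper's. The gap is in your ingredient~(c). The total-mass identity $\sum_{P}\nu(f^{-1}P)=1$ does force the Portmanteau inequalities for the $\nu(f^{-1}P)$'s to become equalities (provided the limit measure does not charge $\partial\Gamma_k$), but it says nothing about the $\nu(Q)$'s: the components $Q\in\cW_{k,k_0}$ with $fQ=P$ do \emph{not} partition $f^{-1}P$, since the degenerate piece $B_{k_0}\cap f^{-1}P$ is missing. So you still only have $\liminf_n\nu_n(Q)\ge\nu(Q)$ for each open $Q$, and because $\phi$ is not monotone this does not yield $\limsup_n\phi(\nu_n(Q)/\nu_n(f^{-1}P))\le\phi(\nu(Q)/\nu(f^{-1}P))$. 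Your last paragraph correctly flags this as the delicate point, but the proposed fix does not close it.

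The paper avoids the issue by a different, simpler device: given the limit point $\mu\in\cP(M)$, one first translates the dyadic grid so that $\mu(\partial\Gamma_k)=\mu(\partial\Gamma_k^{-1,c})=0$ for every $k$ (possible because only countably many translates can carry positive boundary mass). With null boundaries, Portmanteau gives full two-sided convergence $\nu_n(Q)\to\mu(Q)$ and $\nu_n(f^{-1}P)\to\mu(f^{-1}P)$ along any $\nu_n\to\mu$, so each $I_{k,k_0}$ is genuinely \emph{continuous} at $\mu$, and the infimum is upper semi-continuous there. The non-monotonicity of $\phi$ never enters. If you want to keep your Portmanteau-only route, you would need an additional argument controlling the $Q$-masses (for instance, grouping the $Q$'s together with the missing degenerate piece to restore a partition of $f^{-1}P$ and then separating out the degenerate contribution), which is more work than the translation trick.
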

\begin{proof}
Given any $\mu\in\cP(M).$	By a translation if necessary, we may assume $\mu(\partial \Gamma_k)= \mu(\partial\Ga_k^{-1,c})=0$ for all $k\in\N.$
Thus, for any $k\ge k_0,$
\[I_{k,k_0}(\nu)=\sum_{\substack{Q\in\cW_{k,k_0},\\P=fQ}}-\nu(Q) \log\Big{(}\dfrac{\nu(Q)}{\nu(f^{-1}P)}\Big)\]	is continuous at $\mu.$ This implies the upper semi-continuity of $I_k=\inf_{k\ge k_0}I_{k,k_0}$ on $\cP(M).$
\end{proof}

\bigskip

Now we are at the position to prove Theorem \ref{thm1}.

\medskip

\noindent{\it Proof of Theorem \ref{thm1}.}
Given any $\vep>0,$ by  Lemmas \ref{lem:1,1} and  \ref{lem:(2,2)}, for $k_0=\min\{k_1,k_2\}$ we have  
\begin{eqnarray*}
\Delta_k^{(1)}(\nu)<\vep,\quad \sum_{k_0\le j\le k}\Delta_{j}^{(2,2)}(\nu)
<\vep,\quad\forall k\ge k_0.
\end{eqnarray*}
By Proposition \ref{prop:complex_split} and Lemma \ref{lem:monotone},
\begin{eqnarray*}
\Delta_{k}^{(2)}(\nu)
\le I_{k,k_0}(\nu)+\sum_{k_0<j\le k}\Delta_{j}^{(2,2)}(\nu)
<I_{k,k_0}(\nu)+\vep.
\end{eqnarray*}
It then follows from \eqref{folding_ineq} that \begin{eqnarray}\label{estimation}
F_f(\nu)<I_{k,k_0}(\nu)+2\vep,\quad\forall k\ge k_0, 
\end{eqnarray}
where we enlarge $k_0$ if necessary.

Now, let $\{\mu_i\}\subseteq\mathcal M_{\bar \eta}(f) $ such that   $\mu_i$ converge to $\mu$.
Applying \eqref{estimation} with $\nu$ being $\{\mu_i\}$,  together with Proposition \ref{prop: u.s.c I}, we have
\[\limsup_{i\to\infty} F_f(\mu_i)\le \limsup_{i\to\infty} I_{k,k_0}(\mu_i) +2\vep\le   I_{k,k_0}(\mu)+2\vep,\quad\forall k\ge k_0
,\]
which, by \eqref{folding inequality},
implies 
\[\limsup\limits_{i\to\infty} F_f(\mu_i)\le F_f(\mu)+2\vep.\]
The proof of Theorem \ref{thm1} is concluded by the arbitrariness of $\vep$.
\hfill $\Box$ 

\section{Applications to interval maps}\label{Application}

In this section, 
we focus on the one-dimensional setting
by considering $C^r(r>1)$ interval maps.
As applications of Theorem \ref{thm1}, we will show the upper semi-continuity of both the metric entropy $h_\mu(f)$ and the dimension $\dim(\mu)$ when  measures  with uniform degenerate rate are considered. 
To achieve this, we will establish a entropy formula and a dimension formula  to the very general nature of $f.$

\subsection{Upper semi-continuity of metric entropy}\,\smallskip
For a $C^{r}(r>1)$ interval map $f$ and $\mu\in\cM_{inv}(f),$ recall the following inequalities for the metric entropy $h_\mu(f)$:
\begin{eqnarray}
h_\mu(f)&\leq& \displaystyle\int\max\{\lambda(f,x), 0\} d\mu(x),\label{Ruelle_ineq_1D}\\
h_\mu(f)&\leq& 
F_f(\mu)-\displaystyle\int\min\{\lambda(f,x), 0\} d\mu(x),\label{Folding_Ruelle_ineq_1D}
\end{eqnarray}
where $\lambda(f,x)$ is  the Lyapunov exponent  of $f$ at $x$.    
The first inequality in \eqref{Ruelle_ineq_1D} is the well-known Margulis-Ruelle inequality \cite{Ruelle78}, and \eqref{Folding_Ruelle_ineq_1D}  is the  folding-type Ruelle inequality \eqref{folding_Ruelle_ineq} applied in the one-dimensional setting.
The following theorem shows that in the one-dimensional setting, the metric entropy and folding entropy are actually equal. 

\begin{theorem}\label{thm:entropy equality}  Let $f$ be a $C^{r} (r>1)$ map  on an interval  $I$.  Then for any  $\mu\in\mathcal M_{inv}(f)$,  it holds that 
\begin{eqnarray}\label{entropy_formula}
h_\mu(f)=F_f(\mu).
\end{eqnarray}
\end{theorem}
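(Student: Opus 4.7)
The approach is to identify $h_\mu(f)$ with the conditional entropy $H_\mu(\epsilon \mid f^{-1}\epsilon) = F_f(\mu)$ via a Rokhlin-type formula, applied to the measurable partition $\epsilon$ of $I$ into singletons and approximated by a refining sequence of finite partitions. Specifically, I would take any increasing sequence $\{\xi_k\}_{k \ge 1}$ of finite partitions of $I$ — for instance, the partition into intervals of length $2^{-k}$ — with $\bigvee_k \xi_k = \epsilon$ mod $\mu$. For each finite $\xi_k$ the Rokhlin formula gives
\[
h_\mu(f, \xi_k) \;=\; H_\mu\Big(\xi_k \,\Big|\, \bigvee_{i \ge 1} f^{-i}\xi_k\Big),
\]
while the Kolmogorov--Sinai theorem for refining sequences yields $h_\mu(f, \xi_k) \to h_\mu(f)$. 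The proof then reduces to passing to the limit in the Rokhlin formula.

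The key one-dimensional observation is that, modulo $\mu$ and for $k$ large enough, $\bigvee_{i \ge 1} f^{-i}\xi_k = f^{-1}\epsilon$. One inclusion is immediate: if $f(x)=f(y)$ then $f^i(x)=f^i(y)$ for all $i \ge 1$, so $x, y$ lie in the same atom of every $f^{-i}\xi_k$. For the reverse, on the set where the Lyapunov exponent is positive --- which, by the Margulis--Ruelle inequality \eqref{Ruelle_ineq_1D}, carries all the entropy of $\mu$ --- Pesin-type regularity forces $|f^i(f(x)) - f^i(f(y))|$ to exceed the scale $2^{-k}$ in finitely many steps whenever $f(x) \ne f(y)$, placing $x$ and $y$ in distinct $f^{-i}\xi_k$-atoms. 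Granted this identification, a monotone convergence argument as $\xi_k \nearrow \epsilon$ yields $H_\mu(\xi_k \mid f^{-1}\epsilon) \nearrow H_\mu(\epsilon \mid f^{-1}\epsilon) = F_f(\mu)$, closing the equality on the positive-Lyapunov portion of $\mu$.

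The main obstacle is the ergodic components of $\mu$ with non-positive Lyapunov exponent, where the separation mechanism above breaks down. I plan to reduce to ergodic $\mu$ by the ergodic decomposition (both $h_\mu(f)$ and $F_f(\mu)$ are affine in $\mu$), and then treat the cases $\lambda(\mu) < 0$ and $\lambda(\mu) = 0$ by separate one-dimensional arguments, showing in each case that both sides of the equality vanish. When $\lambda(\mu) < 0$ one expects $\mu$ to be concentrated on an attracting periodic orbit, making both sides trivially zero; when $\lambda(\mu) = 0$, the Margulis--Ruelle inequality \eqref{Ruelle_ineq_1D} gives $h_\mu(f) = 0$ directly, and $F_f(\mu) = 0$ will follow from the folding-Ruelle inequality \eqref{Folding_Ruelle_ineq_1D} together with a direct analysis of the preimage structure at a neutral invariant measure. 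Throughout, the disintegration $\{\tilde\mu_y\}$ appearing in the definition of $F_f$ is well-defined because $|\int \log|f'|\,d\mu| < \infty$ forces $\mu(\{f'=0\}) = 0$.
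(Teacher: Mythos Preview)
Your overall plan---split into the positive and non-positive Lyapunov exponent parts---matches the paper's, but there is a genuine gap in your handling of the $\lambda(\mu)\le 0$ case, and you are working harder than necessary on the $\lambda(\mu)>0$ case.

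The gap is in showing $F_f(\mu)=0$ when $\lambda(\mu)=0$. The folding-type Ruelle inequality \eqref{Folding_Ruelle_ineq_1D} reads $h_\mu(f)\le F_f(\mu)-\int\min\{\lambda,0\}\,d\mu$; for $\lambda=0$ this gives $0=h_\mu(f)\le F_f(\mu)$, which is the \emph{wrong} direction. Your appeal to a ``direct analysis of the preimage structure at a neutral invariant measure'' is not a proof, and there is no obvious structural reason a neutral ergodic measure should have trivial folding entropy absent further input. (A similar, though less severe, issue appears for $\lambda(\mu)<0$: the claim that such $\mu$ must sit on an attracting periodic orbit requires justification for general $C^r$ interval maps with degeneracy.)

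The fix---and this is exactly what the paper does---is to notice that you already have the missing inequality $h_\mu(f)\ge F_f(\mu)$ for \emph{every} invariant $\mu$, without any Lyapunov hypothesis. Indeed, your ``immediate'' inclusion $f^{-1}\epsilon\succeq\bigvee_{i\ge 1}f^{-i}\xi_k$ alone yields
\[
h_\mu(f,\xi_k)=H_\mu\Big(\xi_k\,\Big|\,\bigvee_{i\ge1}f^{-i}\xi_k\Big)\ \ge\ H_\mu(\xi_k\mid f^{-1}\epsilon),
\]
and passing to the limit $\xi_k\nearrow\epsilon$ gives $h_\mu(f)\ge F_f(\mu)$. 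With this in hand, on the non-positive part Margulis--Ruelle gives $h=0$ and hence $F_f\le h=0$: both sides vanish with no further analysis. On the positive part you no longer need the Pesin-type separation argument to get the reverse inclusion $\bigvee_{i\ge1}f^{-i}\xi_k\succeq f^{-1}\epsilon$; the paper simply invokes \eqref{Folding_Ruelle_ineq_1D}, which for $\lambda>0$ reads $h_\mu(f)\le F_f(\mu)$ directly. Combining the two pieces via affinity of both $h_\mu$ and $F_f$ finishes the proof.

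Finally, your last remark is inaccurate: the theorem is stated for arbitrary $\mu\in\mathcal M_{inv}(f)$, with no integrability assumption on $\log|f'|$, and the disintegration $\{\tilde\mu_y\}$ defining $F_f$ exists regardless by Rokhlin's theorem.
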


\begin{proof}
First, we have  that 
\begin{equation}\label{folding small} h_{\mu}(f) \ge F_\mu(f),\quad\forall\mu\in\cM_{inv}(f).
\end{equation}
To see this, let  $\{\xi_n\}_{n\ge 1}$ be a  sequence of increasing finite partitions of $I$ such that $\bigvee\limits_{n\geq1}\xi_n=\epsilon (\mod \mu)$. 
Then by \eqref{metric-entropy-by-conditional-entropy}, for each $n\ge1,$
\begin{eqnarray*}
h_\mu(f,\xi_n)=H_\mu(\xi_n|\bigvee\limits_{i=1}^{\infty}f^{-i}\xi_n)\geq H_\mu(\xi_n|f^{-1}\epsilon),
\end{eqnarray*}
where the inequality is by observing that 
$f^{-1}\epsilon\succeq\bigvee \limits_{i=1}^{\infty}f^{-i}\xi_n=f^{-1}(\bigvee\limits_{i=0}^{\infty}f^{-i}\xi_n).$ 
Thus, 
\begin{eqnarray*}
h_\mu(f)=\lim\limits_{n\rightarrow\infty}h_\mu(f,\xi_n)\geq \lim\limits_{n\rightarrow\infty}H_\mu(\xi_n|f^{-1}\epsilon)=H_\mu(\epsilon|f^{-1}\epsilon).
\end{eqnarray*}  

\medskip

Now, we only need to show $h_{\mu}(f) \le F_f(\mu).$ Denote
\begin{eqnarray*}
A_1=\{x\in I: \lambda(f, x)\le 0\} \quad \text{and}\quad A_2=\{x\in I: \lambda(f, x)> 0\}.
\end{eqnarray*}
Without loss of generality, assume that $A_1$ and $A_2$ are both  positive $\mu$-measured, since for otherwise, the situations would be easier.  Let $\mu^{(i)}=\mu/ \mu(A_i), i=1,2$, and we write  $\mu=\sum_{i=1,2}\mu(A_i)\mu^{(i)}.$
	
By \eqref{Ruelle_ineq_1D} we have $h_{\mu^{(1)}}(f)=0.$ Then combined  with (\ref{folding small}), one  gets  
\begin{eqnarray}\label{folding1}
F_f(\mu)=h_{\mu^{(1)}}(f)=0.
\end{eqnarray}
For $\mu^{(2)},$ one directly gets, from  \eqref{Folding_Ruelle_ineq_1D}, that
\begin{eqnarray}\label{folding2}
h_{\mu^{(2)}}(f)\leq F_f(\mu^{(2)}).
\end{eqnarray}
Since both $A_1$ and $A_2$ are $f$-invariant (mod $\mu$), 
we have  
\begin{eqnarray*}
F_f(\mu)=\int_IH_{\tilde{\mu}_x}(\epsilon)d\mu
=\sum_{i=1,2}\int_{A_i} H_{\tilde{\mu}_x}(\epsilon)d\mu
=\sum_{i=1,2}\mu(A_i)F_f(\mu^{(i)}),
\end{eqnarray*}
where recall that $\tilde{\mu}_x$ is the disintegration  of $\mu$ along the partition $\{f^{-1}x\}.$
Now, combining  \eqref{folding1} and \eqref{folding2}, we have
\begin{eqnarray*}
h_\mu(f)=\sum_{i=1,2}\mu^{(i)}(A_i)h_{\mu^{(i)}}(f)\le \sum_{i=1,2}\mu^{(i)}(A_i) F_f({\mu^{(i)}})=F_f(\mu).
\end{eqnarray*}
\end{proof}

\noindent{\it Proof of Theorem \ref{thm2}.} 
By \eqref{entropy_formula}, Theorem \ref{thm2} is a direct consequence of Theorem \ref{thm1}.

\subsection{Upper semi-continuity of dimension}\,\smallskip
For an ergodic $\mu\in\cM_{inv}(f),$ the Lyapunov exponents of $f$  are  constants with respect to $\mu$-a.e. $x$,   and in the one-dimensional setting particularly, we have  only one  Lyapunov exponent and denote it as $\lambda(\mu)$.
The main estimation of this subsection is to establish the following {\it dimension formula} which relates the dimension to  metric entropy through  Lyapunov exponent for all $C^r(r>1)$ interval maps. 

\begin{theorem} \label{thm:dim_formula}
Let $f$ be a $C^{r}(r>1)$ map on an interval $I.$  Given a hyperbolic  $\mu\in\cM_{erg}(f)$,  
then  $\mu$ is exact dimensional and satisfies 
\begin{eqnarray}\label{dim_formula}
h_{\mu}(f)=\lambda^+(\mu) \dim(\mu), 
\end{eqnarray}
where $\lambda^+(\mu)=\max\{\lambda(\mu),0\}.$
\end{theorem}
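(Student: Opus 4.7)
The plan is to prove \eqref{dim_formula} by splitting on the sign of $\lambda(\mu)$ and, in the nontrivial case $\lambda(\mu)>0$, comparing Bowen balls with metric balls via an integrable Brin--Katok formula robust to the degeneracy of $f$.

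If $\lambda(\mu)\le 0$, the hyperbolicity hypothesis forces $\lambda(\mu)<0$ and $\lambda^+(\mu)=0$, so \eqref{Ruelle_ineq_1D} gives $h_\mu(f)=0$ and \eqref{dim_formula} degenerates to $0=0$. It remains to verify that $\mu$ is exact dimensional, which follows from the classical fact that for a $C^r(r>1)$ interval map any ergodic invariant measure with a negative Lyapunov exponent is atomic on an attracting periodic orbit; then $\dim(\mu)=0$.

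For the main case $\lambda(\mu)>0$, Birkhoff's ergodic theorem yields
\[
\lim_{n\to\infty}\frac{1}{n}\log|Df^n(x)|=\lambda(\mu)\qquad\text{for $\mu$-a.e. }x,
\]
with tempered deviations. Granted an integrable Brin--Katok formula
\[
h_\mu(f)=\lim_{\epsilon\to0}\lim_{n\to\infty}-\frac{1}{n}\log\mu(B_n(x,\epsilon)),\qquad B_n(x,\epsilon):=\bigcap_{i=0}^{n-1}f^{-i}B(f^i x,\epsilon),
\]
holding $\mu$-a.e., I would estimate the diameter of the component of $B_n(x,\epsilon)$ containing $x$. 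On a Pesin block $\Lambda_\tau$ of $\mu$-measure exceeding $1-\tau$, the mean-value theorem and bounded distortion give, for any small $\gamma>0$ and $n$ large, the two-sided inclusion
\[
B\bigl(x,c\,\epsilon\,e^{-n(\lambda(\mu)+\gamma)}\bigr)\subseteq B_n(x,\epsilon)\subseteq B\bigl(x,C\,\epsilon\,e^{-n(\lambda(\mu)-\gamma)}\bigr).
\]
Setting $\delta:=\epsilon\,e^{-n\lambda(\mu)}$, so that $n\sim -\log\delta/\lambda(\mu)$, converts the Brin--Katok limit into
\[
\underline{\dim}(x)=\overline{\dim}(x)=\frac{h_\mu(f)}{\lambda(\mu)}\qquad\text{for $\mu$-a.e. }x,
\]
and Proposition \ref{prop:young} then yields exact dimensionality together with \eqref{dim_formula}.

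The principal obstacle is the critical set $\Sigma_f$: whenever the orbit of $x$ passes near $\Sigma_f$, the derivative $|Df(f^i x)|$ can drop arbitrarily, making the component of $B_n(x,\epsilon)$ through $x$ pinched on scales far smaller than $\epsilon\,e^{-n\lambda(\mu)}$, so the classical Brin--Katok formula with fixed small $\epsilon$ no longer suffices. To overcome this I would establish an integrable Brin--Katok formula by exploiting the finiteness of $\int|\log|Df||\,d\mu$ implied by $|\lambda(\mu)|<\infty$: a Borel--Cantelli argument over visits of the orbit to shrinking neighborhoods of $\Sigma_f$, paired with the tempered-distortion estimate, forces the deviation of $\log|Df^n(x)|$ from $n\lambda(\mu)$ to remain subexponential on a full-measure set, which is exactly what the two-sided Bowen-to-metric ball comparison needs to survive the $\epsilon\to0$ limit. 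Once this integrable Brin--Katok formula is secured, the remainder reduces to the standard Young--Ledrappier computation sketched above.
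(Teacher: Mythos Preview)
Your overall plan coincides with the paper's: dispose of $\lambda(\mu)<0$ trivially, and for $\lambda(\mu)>0$ deduce the dimension formula by sandwiching Bowen balls between metric balls and invoking a Brin--Katok identity that is robust to the critical set. You also correctly single out $\Sigma_f$ as the obstruction and the log-integrability of $|f'|$ as the cure. Where your proposal diverges is in the mechanism.

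The paper does \emph{not} work with fixed-radius Bowen balls on Pesin blocks, nor with a Borel--Cantelli argument. Instead it introduces a variable radius $\rho(x)=(\gamma|f'(x)|/K)^{1/\alpha}$, essentially the scale at which $f$ is still a $(1\pm\gamma)$-distorted diffeomorphism near $x$, and replaces $B_n(x,\epsilon)$ by $B_n(x,\rho^\delta)$ with $\rho^\delta=\min\{\rho,\delta\}$. On such variable-radius Bowen balls $f$ is automatically injective at every time step with derivative in $[(1-\gamma)|f'(f^ix)|,(1+\gamma)|f'(f^ix)|]$, and this is what makes the two-sided inclusion into metric balls go through without any Pesin-block restriction (Claim~\ref{claim:contain}). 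The corresponding integrable Brin--Katok formula (Lemma~\ref{lem:int BK}) is then obtained not by Borel--Cantelli but by sandwiching the classical Brin--Katok formula (Proposition~\ref{prop:BK}) against Ma\~n\'e's inequality (Proposition~\ref{prop:mane}). The extra factor $\Phi_n(x)=\prod_{i:\,f^ix\in A_\delta}\rho(f^ix)$ appearing in the inner radius is handled by Birkhoff: $n^{-1}\log\Phi_n(x)\to\int_{A_\delta}\log\rho\,d\mu$, which tends to $0$ as $\delta\to0$.

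Your fixed-$\epsilon$ scheme has a real soft spot at the inclusion $B_n(x,\epsilon)\subseteq B(x,C\epsilon e^{-n(\lambda-\gamma)})$. Whenever $d(f^ix,\Sigma_f)<\epsilon$, the ball $B(f^ix,\epsilon)$ contains a critical point, so $f$ is not injective there and the ``bounded distortion on a Pesin block'' statement, which controls $|Df^n|$ at points of the block, says nothing about the geometry of $B_n(x,\epsilon)$. A Borel--Cantelli bound on close returns to $\Sigma_f$ does not by itself restore injectivity on $\epsilon$-balls along the orbit. The variable-radius device $\rho^\delta$ is exactly what sidesteps this, at the price of needing the Ma\~n\'e inequality to recover Brin--Katok for non-constant radii. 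If you want to salvage your route, you would effectively be led to shrink $\epsilon$ along the orbit to something like $\rho(f^ix)$ anyway, which is the paper's argument.
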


\medskip

Before proceeding to its proof,  we would like to state two remarks of Theorem \ref{thm:dim_formula}:
(i) The hyperbolicity assumption of $\mu$ is sharp.  Both $C^r (r<\infty)$  and  analytical   examples are constructed in respective \cite{LM}  and \cite{KS}  showing that for a non-hyperbolic ergodic measure $\mu$ with zero exponent,  the  local dimension may not exist almost everywhere;
(ii) The formula \eqref{dim_formula} is proved by Ledrappier in \cite{Led} for an interval map $f$ under the assumption that the entropy of the partition by the degenerate sets of $f$ and $f'$ 
are  finite. Thus, Theorem \ref{thm:dim_formula} is a general result in this respect.

\medskip

A key step in the proof of Theorem \ref{thm:dim_formula} is   an  {\it integrable  version} of the classical {\it Brin-Katok formula}; see Lemma \ref{lem:int BK}.
Let $f$ be a continuous map on  a compact metric space $M$. For any $\delta>0$,   $n\in\mathbb{N}$ and $x\in M$, define
\[B_n(x,\delta)=\big\{y\in M:  d(f^ix, f^iy)<\delta, 0\le i\le n-1\big\}.\]
The following local entropy formula was established by Brin and Katok in \cite{BK}. 

\begin{pro}	[Brin-Katok  \cite{BK}]\label{prop:BK} 
Let  $\mu\in\cM_{inv}(f).$ Then for $\mu$-a.e. $x\in M$, 
\begin{eqnarray}\label{BK_formula}
\lim_{\delta\to0}\liminf_{n\to\infty} -\frac{\log \mu(B_n(x,\delta))}{n}=\lim_{\delta\to0}\limsup_{n\to\infty} -\frac{\log \mu( B_n(x,\delta))}{n}:=h_{\mu}(f,x),
\end{eqnarray}
where  the local entropy  $h_\mu(f,x)$ satisfies 
\begin{eqnarray*}
\int_X h_{\mu}(f,x)d\mu=h_{\mu}(f).
\end{eqnarray*}
In particular, if $\mu$ is ergodic, then \[h_{\mu}(f,x)=h_{\mu}(f),\quad \mu{\text{-a.e.}}\ x\in M.\] 
\end{pro}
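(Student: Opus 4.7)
The plan is to prove the Brin--Katok local entropy formula in two halves, handling the upper and lower local entropies separately, and then deducing the integration identity and the ergodic constancy. For brevity set
\[\overline{h}(x,\delta):=\limsup_{n\to\infty}-\tfrac{1}{n}\log\mu(B_n(x,\delta)),\qquad \underline{h}(x,\delta):=\liminf_{n\to\infty}-\tfrac{1}{n}\log\mu(B_n(x,\delta)),\]
both of which are monotone non-decreasing as $\delta\downarrow 0$, so that $\overline{h}(x):=\lim_{\delta\to 0}\overline{h}(x,\delta)$ and $\underline{h}(x):=\lim_{\delta\to 0}\underline{h}(x,\delta)$ are well defined. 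The task is to establish $\underline{h}(x)=\overline{h}(x)$ for $\mu$-a.e.\ $x$, with common integral equal to $h_\mu(f)$.

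\textbf{Upper bound.} For each $\delta>0$ I would pick a finite measurable partition $\xi$ with $\diam\xi<\delta$ and $\mu(\partial\xi)=0$ (always arrangeable by a small translation of a dyadic grid, in the spirit of Section~\ref{sec:degenerate}). Writing $\xi^n(x):=\bigcap_{i=0}^{n-1}f^{-i}\xi(f^ix)$ for the atom of $\bigvee_{i=0}^{n-1}f^{-i}\xi$ containing $x$, the inclusion $\xi^n(x)\subseteq B_n(x,\delta)$ is immediate, so $\mu(\xi^n(x))\le\mu(B_n(x,\delta))$, and the Shannon--McMillan--Breiman theorem gives $\overline{h}(x,\delta)\le\limsup_n-\tfrac{1}{n}\log\mu(\xi^n(x))=h_\mu(f,\xi,x)$ for $\mu$-a.e.\ $x$. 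Integrating and letting $\diam\xi\to 0$ along a sequence with $h_\mu(f,\xi)\to h_\mu(f)$ yields $\int\overline{h}\,d\mu\le h_\mu(f)$.

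\textbf{Lower bound (the main obstacle).} Fix $\varepsilon>0$ and a finite partition $\xi$ with $\mu(\partial\xi)=0$ and $h_\mu(f,\xi)>h_\mu(f)-\varepsilon$. For $\delta>0$ small, $\mu((\partial\xi)^\delta)<\varepsilon'$ where $(\partial\xi)^\delta$ is the $\delta$-neighborhood of the partition boundary and $\varepsilon'\to 0$ as $\delta\to 0$. The crucial combinatorial step is to control, for fixed $x$, the number of atoms of $\bigvee_{i=0}^{n-1}f^{-i}\xi$ that meet $B_n(x,\delta)$: for any index $i$ with $f^ix\notin(\partial\xi)^\delta$ the ball $B(f^ix,\delta)$ lies in a single $\xi$-atom, so only the indices $i$ with $f^ix\in(\partial\xi)^\delta$ contribute combinatorial freedom, giving at most $K^{\#\{i<n\,:\,f^ix\in(\partial\xi)^\delta\}}$ atoms, where $K$ is a geometric constant depending on $\xi$. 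By the Birkhoff ergodic theorem applied to $\mathbf{1}_{(\partial\xi)^\delta}$, this density is $\mu$-a.e.\ bounded by $\varepsilon'$ for $n$ large, whence the count is at most $e^{n\psi(\varepsilon')}$ with $\psi(\varepsilon')\to 0$. Combined with the SMB lower bound $\mu(\xi^n(y))\ge e^{-n(h_\mu(f,\xi,y)+\varepsilon)}$ valid on a set of arbitrarily large measure, a union bound yields $\mu(B_n(x,\delta))\le e^{-n(h_\mu(f,\xi,x)-\varepsilon-\psi(\varepsilon'))}$ and hence $\underline{h}(x,\delta)\ge h_\mu(f,\xi,x)-\varepsilon-\psi(\varepsilon')$ a.e. Letting $\delta\to 0$ and then optimizing over $\xi$ gives $\int\underline{h}\,d\mu\ge h_\mu(f)$.

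\textbf{Conclusion and ergodic case.} Combining the two bounds with the trivial inequality $\underline{h}\le\overline{h}$ forces $\underline{h}(x)=\overline{h}(x)=:h_\mu(f,x)$ for $\mu$-a.e.\ $x$, with $\int h_\mu(f,\cdot)\,d\mu=h_\mu(f)$. For the ergodic statement, the inclusion $B_{n+1}(x,\delta)\subseteq f^{-1}B_n(fx,\delta)$ combined with $f$-invariance of $\mu$ gives $\underline{h}(x,\delta)\ge\underline{h}(fx,\delta)$; integrating forces equality a.e., so $h_\mu(f,\cdot)$ is $f$-invariant, and ergodicity pins it to the constant $h_\mu(f)$. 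The main difficulty is the combinatorial counting in the lower bound, because a dynamical ball can in principle meet exponentially many refinement atoms; the hypothesis $\mu(\partial\xi)=0$, fed into the Birkhoff ergodic theorem, is precisely what keeps that count subexponential in $n$.
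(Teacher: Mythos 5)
The paper does not prove this proposition; it is quoted verbatim from Brin--Katok \cite{BK} and used as a black box (it feeds into Lemma~\ref{lem:int BK} and the proof of Theorem~\ref{thm:dim_formula}). So there is no in-paper proof to compare against; I am evaluating your sketch on its own merits.

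Your outline does reconstruct the standard Brin--Katok argument: the easy inclusion $\xi^n(x)\subseteq B_n(x,\delta)$ plus Shannon--McMillan--Breiman for the upper bound, and for the lower bound an atom-count controlled by visits to $(\partial\xi)^\delta$ via Birkhoff, combined with SMB and a union bound. The monotonicity in $\delta$, the integration identity, and the ergodic constancy via $B_{n+1}(x,\delta)\subseteq f^{-1}B_n(fx,\delta)$ are all correctly handled. Two points need fixing. First, in the union-bound step you invoke an ``SMB \emph{lower} bound'' $\mu(\xi^n(y))\ge e^{-n(h_\mu(f,\xi,y)+\varepsilon)}$, but that is the wrong inequality: with a lower bound on atom masses and a count of atoms, you cannot deduce an upper bound on $\mu(B_n(x,\delta))$. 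What the union bound actually needs is the SMB \emph{upper} bound $\mu(\xi^n(y))\le e^{-n(h_\mu(f,\xi,y)-\varepsilon)}$ on a large-measure set (via Egorov), and then the contribution of the small exceptional set must be accounted for separately; as written, the step would not compile. Second, the invocation of Birkhoff for $\mathbf{1}_{(\partial\xi)^\delta}$ yields the conditional expectation $\mathbb{E}_\mu[\mathbf{1}_{(\partial\xi)^\delta}\mid\mathcal{I}](x)$, not the number $\mu((\partial\xi)^\delta)$, unless $\mu$ is ergodic; in the non-ergodic case one must discard a set where this conditional expectation is large (controllable by Markov's inequality since its integral is $\mu((\partial\xi)^\delta)$), and likewise $h_\mu(f,\xi,x)$ is no longer constant, so the union-bound estimate must be localized to a level set of the local entropy. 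These are standard repairs, but they are precisely the delicate part of Brin--Katok and should not be glossed in a proof meant to stand on its own.
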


\medskip

We will prove an integrable version of \eqref{BK_formula} 
by replacing $\delta$ 
with any  $\log$-integrable function. To be specific, 
let $\mathscr S$ be the set of all 
functions $\psi: M \to \mathbb{R}^+$ satisfying 
\begin{eqnarray*}
	\int -\log \psi(x)d\mu(x)<\infty.
\end{eqnarray*}
 For any $\psi\in\mathscr S$, $n\in \mathbb{N}$, and $x\in M$  define 
\[B_n(x,\psi)=\big\{y\in M: 
d(f^ix, f^iy)<\psi(f^ix), 0\le i\le n-1\big\}.\]
 It has been shown by Ma\~n\'e that 
\begin{pro}[Ma\~n\'e\cite{Mane}]\label{prop:mane}  For any $\psi\in \mathscr S$, 
\begin{eqnarray*}
\int\limsup_{n\to\infty} -\frac{\log \mu( B_n(x,\psi))}{n}d\mu(x)\le h_{\mu}(f).
\end{eqnarray*}
\end{pro}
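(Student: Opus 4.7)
The plan is to mimic the classical Brin--Katok argument with a scale-dependent countable partition $\xi$ chosen so that the refinement $\bigvee_{i=0}^{n-1}f^{-i}\xi$ is finer than the variable-radius Bowen ball $B_n(x,\psi)$ on almost every orbit, and then invoke the Shannon--McMillan--Breiman theorem for $\xi$ together with $h_\mu(f,\xi)\le h_\mu(f)$. The integrability hypothesis $\int-\log\psi\,d\mu<\infty$ will be used precisely to ensure that the auxiliary partition has finite entropy.

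\emph{Step 1 (the main construction).} I would stratify $M$ by the size of $\psi$: set $A_k=\{x\in M:2^{-k}\le\psi(x)<2^{-k+1}\}$ for $k\ge 0$. For each $k$, fix a finite Borel partition $\eta_k$ of $M$ into sets of diameter less than $2^{-k-1}$; by compactness the cardinality satisfies $\#\eta_k\le N_k\le C\cdot 2^{k\dim M}$. Define
\[\xi:=\bigsqcup_{k\ge 0}\{C\cap A_k:C\in\eta_k\}.\]
Then $\diam(\xi(x))<2^{-k-1}\le\psi(x)/2$ for $x\in A_k$, and
\[H_\mu(\xi)\le\sum_{k\ge 0}\bigl[-\mu(A_k)\log\mu(A_k)\bigr]+\sum_{k\ge 0}\mu(A_k)\log N_k.\]
The hypothesis $\int-\log\psi\,d\mu<\infty$ is equivalent to $\sum_k k\,\mu(A_k)<\infty$, which controls the second sum. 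For the first sum, a short splitting argument (separating indices with $\mu(A_k)\le 2^{-(k+1)}$, on which monotonicity of $-p\log p$ near $0$ gives $-\mu(A_k)\log\mu(A_k)\le 2^{-(k+1)}(k+1)\log 2$, from the complementary indices, on which $-\mu(A_k)\log\mu(A_k)\le(k+1)\mu(A_k)\log 2$) yields summability. Hence $H_\mu(\xi)<\infty$.

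\emph{Step 2 (containment and SMB).} If $y\in\xi_n(x):=\bigcap_{i=0}^{n-1}f^{-i}\xi(f^ix)$, then $f^iy\in\xi(f^ix)$, so $d(f^ix,f^iy)<\psi(f^ix)$ for every $0\le i\le n-1$, giving $\xi_n(x)\subseteq B_n(x,\psi)$ and
\[-\tfrac{1}{n}\log\mu(B_n(x,\psi))\le-\tfrac{1}{n}\log\mu(\xi_n(x)).\]
Applying the Shannon--McMillan--Breiman theorem (valid for countable partitions with $H_\mu(\xi)<\infty$), the right-hand side converges $\mu$-a.e.\ to the local entropy $h_\mu(f,\xi)(x)$ with $\int h_\mu(f,\xi)\,d\mu=h_\mu(f,\xi)\le h_\mu(f)$. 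Taking $\limsup_{n\to\infty}$ and integrating the inequality delivers the claim.

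\emph{Main obstacle.} The delicate step is Step 1: the partition must refine geometrically on the scale $2^{-k}$ to resolve the variable radius $\psi(x)$, yet $\xi$ must retain finite entropy despite the exponential growth $N_k\asymp 2^{k\dim M}$. The log-integrability of $\psi$ sits exactly at the threshold that makes both the combinatorial term $\sum\mu(A_k)\log N_k$ and the entropy-of-indices term $\sum-\mu(A_k)\log\mu(A_k)$ converge, which is why the hypothesis $\psi\in\mathscr S$ is the natural one for this result.
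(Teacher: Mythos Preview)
The paper does not supply its own proof of this proposition; it is quoted from Ma\~n\'e and used as a black box in the proof of Lemma~\ref{lem:int BK}. Your argument is in fact Ma\~n\'e's: stratify by the dyadic level sets $A_k=\{2^{-k}\le\psi<2^{-k+1}\}$, refine each $A_k$ by a partition of mesh $<2^{-k-1}$ to obtain a countable $\xi$ with $\xi(x)\subset B(x,\psi(x))$, verify $H_\mu(\xi)<\infty$ via the log-integrability of $\psi$, and conclude by Shannon--McMillan--Breiman for countable finite-entropy partitions together with $h_\mu(f,\xi)\le h_\mu(f)$. The entropy estimate in Step~1 (splitting $\sum_k\phi(\mu(A_k))$ according to whether $\mu(A_k)\gtrless 2^{-(k+1)}$) and the containment $\xi_n(x)\subset B_n(x,\psi)$ in Step~2 are both correct.

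One point worth flagging: your bound $\#\eta_k\le C\cdot 2^{k\dim M}$ tacitly assumes that $M$ has finite upper box dimension, so that $\log N_k=O(k)$ and hence $\sum_k\mu(A_k)\log N_k$ is controlled by $\sum_k k\,\mu(A_k)<\infty$. The surrounding text states the result for an arbitrary compact metric space, where covering numbers at scale $2^{-k}$ need not grow subexponentially in $k$, and then this step does not go through as written. In Ma\~n\'e's original setting, and in every application in this paper ($M$ a compact Riemannian manifold, ultimately an interval), the box dimension is finite and your proof is complete; if you want to match the stated generality you should add a finite-box-dimension hypothesis or remark on the restriction.
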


Based on Propositions \ref{prop:BK} and  \ref{prop:mane}, we obtain the following generalized local entropy formula.

\begin{lemma}\label{lem:int BK}
Let $f$ be a continuous map on a compact metric space $M$ and $\mu\in\cM_{inv}(f).$ 
Then given any $\psi\in\mathscr{S},$ for $\mu$-a.e. $x\in M$,
\[\lim_{\delta\to0}\liminf_{n\to\infty}\ -\frac{\log \mu(B_n(x,\psi^{\delta}))}{n}=\lim_{\delta\to0}\limsup_{n\to\infty} -\frac{\log \mu( B_n(x,\psi^{\delta}))}{n}=h_{\mu}(f,x),\]
where $\psi^{\delta}(x):=\min\{\psi(x), \delta\}$ and $h_{\mu}(f,x)$ is given as in Proposition \ref{prop:BK}. 
\end{lemma}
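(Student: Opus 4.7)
The plan is to squeeze both $\lim_{\delta\to 0}\liminf_{n}$ and $\lim_{\delta\to 0}\limsup_{n}$ of $-\tfrac{1}{n}\log\mu(B_{n}(x,\psi^{\delta}))$ between $h_{\mu}(f,x)$ from below, using the inclusion $B_{n}(x,\psi^{\delta})\subseteq B_{n}(x,\delta)$ together with Proposition \ref{prop:BK}, and $h_{\mu}(f,x)$ from above, using Proposition \ref{prop:mane} together with the monotone convergence theorem. An opening observation is that $\psi^{\delta}\in\mathscr{S}$ for every $\delta>0$: since $-\log\psi^{\delta}=\max\{-\log\psi,-\log\delta\}\leq|\log\psi|+|\log\delta|\in L^{1}(\mu)$, Proposition \ref{prop:mane} may be applied to each $\psi^{\delta}$.

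For the lower bound, the pointwise relation $\psi^{\delta}\leq\delta$ forces $B_{n}(x,\psi^{\delta})\subseteq B_{n}(x,\delta)$, hence
\[-\frac{1}{n}\log\mu(B_{n}(x,\psi^{\delta}))\geq-\frac{1}{n}\log\mu(B_{n}(x,\delta)).\]
Taking $\liminf_{n}$ (respectively $\limsup_{n}$) and then $\delta\to 0$, Proposition \ref{prop:BK} immediately gives a lower bound of $h_{\mu}(f,x)$ for $\mu$-a.e.\ $x$, for both quantities in the statement.

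For the upper bound, set $F_{\delta}(x):=\limsup_{n\to\infty}-\tfrac{1}{n}\log\mu(B_{n}(x,\psi^{\delta}))\geq 0$. Since $\psi^{\delta}$ decreases as $\delta\to 0^{+}$, the sets $B_{n}(x,\psi^{\delta})$ shrink and $F_{\delta}$ is non-decreasing in this limit; set $F_{0}(x):=\lim_{\delta\to 0^{+}}F_{\delta}(x)\in[0,\infty]$. Proposition \ref{prop:mane} applied to $\psi^{\delta}$ yields $\int F_{\delta}\,d\mu\leq h_{\mu}(f)$ for every $\delta>0$, and monotone convergence then gives
\[\int F_{0}(x)\,d\mu(x)\leq h_{\mu}(f)=\int h_{\mu}(f,x)\,d\mu(x).\]
Combined with the pointwise lower bound $F_{0}(x)\geq h_{\mu}(f,x)$ a.e.\ from the previous paragraph, the nonnegative integrand $F_{0}-h_{\mu}(f,\cdot)$ has nonpositive integral, forcing $F_{0}(x)=h_{\mu}(f,x)$ for $\mu$-a.e.\ $x$. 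The liminf counterpart $G_{\delta}(x):=\liminf_{n}-\tfrac{1}{n}\log\mu(B_{n}(x,\psi^{\delta}))\leq F_{\delta}(x)$ is bounded below by $h_{\mu}(f,x)$ in the $\delta\to 0$ limit by the same inclusion argument, so $\lim_{\delta\to 0}G_{\delta}(x)$ is trapped between $h_{\mu}(f,x)$ and $F_{0}(x)=h_{\mu}(f,x)$, giving equality.

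The main obstacle is that Proposition \ref{prop:mane} supplies only an integral upper bound by the global entropy $h_{\mu}(f)$, not a pointwise bound by $h_{\mu}(f,x)$. The two halves therefore have to be glued using the identity $h_{\mu}(f)=\int h_{\mu}(f,x)\,d\mu$ furnished by Proposition \ref{prop:BK}, converting an integral inequality into pointwise equality almost everywhere. Once the monotonicity of $\delta\mapsto F_{\delta}$ is noted (enabling monotone convergence) and $\psi^{\delta}\in\mathscr{S}$ is verified, the rest of the argument is a clean squeeze.
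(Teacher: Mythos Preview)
Your proof is correct and follows essentially the same route as the paper's: both obtain the pointwise lower bound from the inclusion $B_{n}(x,\psi^{\delta})\subseteq B_{n}(x,\delta)$ together with Proposition~\ref{prop:BK}, apply Proposition~\ref{prop:mane} to each $\psi^{\delta}$ for an integral upper bound, pass to the limit $\delta\to 0$ by monotone convergence, and then use $h_{\mu}(f)=\int h_{\mu}(f,x)\,d\mu$ to convert the resulting integral equality into a pointwise one. Your write-up is slightly more explicit in checking $\psi^{\delta}\in\mathscr{S}$ and in naming the monotone-convergence step, but the architecture is identical.
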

\begin{proof} 
For one thing, since $B_n(x,\psi^\delta)\subseteq B_n(x,\delta)$, we have
\begin{eqnarray}\label{ineq1}
\lim_{\delta\to0} \liminf_{n\to\infty} -\frac{\log \mu( B_n(x,\psi^{\delta}))}{n}
&\ge&h_{\mu}(f,x)
\end{eqnarray}
which, by \eqref{BK_formula}, yields 
\begin{eqnarray}\label{ineq2}
\int_M  \lim_{\delta\to0} \liminf_{n\to\infty} -\frac{\log \mu( B_n(x,\psi^{\delta}))}{n}d\mu
&\ge&  \int_M h_{\mu}(f,x)d\mu
=h_{\mu}(f).
\end{eqnarray}
For another, by Proposition \ref{prop:mane}, 
\begin{eqnarray}\label{ineq3}
\int_M \lim_{\delta\to0} \limsup_{n\to\infty} -\frac{\log \mu( B_n(x,\psi^{\delta}))}{n}d\mu
= \lim_{\delta\to0}  \int_M \limsup_{n\to\infty}-\frac{\log \mu( B_n(x,\psi^{\delta}))}{n}d\mu
\le h_{\mu}(f).
\end{eqnarray}
Combining  \eqref{ineq2} and \eqref{ineq3}, we have
\begin{eqnarray*}
\int_M  \lim_{\delta\to0} \limsup_{n\to\infty} -\frac{\log \mu( B_n(x,\psi^{\delta}))}{n}d\mu&=&\int_X \lim_{\delta\to0} \liminf_{n\to\infty} -\frac{\log \mu( B_n(x,\psi^{\delta}))}{n}d\mu\\[2mm] &=&  \int_X h_{\mu}(f,x)d\mu
= h_{\mu}(f).
\end{eqnarray*}
Then together with \eqref{ineq1},  for $\mu$-a.e. $x\in M,$ it holds that $$\lim_{\delta\to0}\limsup_{n\to\infty}\ -\frac{\log \mu(B_n(x,\psi^{\delta}))}{n}=\lim_{\delta\to0}\liminf_{n\to\infty} -\frac{\log \mu( B_n(x,\psi^{\delta}))}{n}=h_{\mu}(f,x).$$

\end{proof}

\noindent {\it Proof of Theorem \ref{thm:dim_formula}:} 
Denote $\alpha=\min\{r-1, 1\}$ and  $L=\max_{ x\in I}\{|f'(x)|,1\}$.  Then $f$ is $C^{1+\alpha},$ i.e.,  for some constant $K>0,$ $|f'(x)-f'(y)|\le K|x-y|^\alpha,\ \forall x,y\in I.$ Take  small $\gamma\in (0,1),$ and for any $x\in I \backslash\Sigma_f$ let $\rho(x)=(\frac{\gamma|f'(x)|}{K})^{\frac{1}{\alpha}}.$ Then 
$$(1-\gamma) |f'(x)|\le |f'(y)|\leq(1+\gamma)|f'(x)|, \quad \forall\,y\in B(x,\rho(x)),$$
which implies that $f|_{B(x,\rho(x))}$ is a local diffeomorphism.

By the definition of Lyapunov exponent, for $\mu$-a.e. $x\in I$,
$$\int_M\log|f'(x)|d\mu(x)=\lim_{n\to \infty}\frac{1}{n}\sum_{i=0}^{n-1} \log|f'(f^ix)|=\lambda(\mu),$$
or equivalently, 
\[\lim_{n\to \infty}\frac{1}{n}\sum_{i=0}^{n-1} \log\big|(f^{-1}\mid_{f(B(f^{i}x, \rho(f^ix)))})'(f^{i+1}x)\big|=-\lambda(\mu).\]
Thus, there exists $N(x)\in\mathbb{N}$ 
such that for any $n\ge N(x),$
\begin{eqnarray}\label{Lya_ineq1}
\prod_{i=0}^{n-1} |f'(f^ix)|\in \big(e^{n(\lambda(\mu)-\gamma)}, \,e^{n(\lambda(\mu)+\gamma)}\big),
\end{eqnarray}
or equivalently
\begin{eqnarray}\label{Lya_ineq2}
\prod_{i=0}^{n-1}\big|(f^{-1}\mid_{f(B(f^{i}x, \rho(f^ix)))})'(f^{i+1}x)\big|\in\big(e^{n(-\lambda(\mu)-\gamma\big)}, \,e^{n(-\lambda(\mu)+\gamma)}).
\end{eqnarray}
Let  $ D(x)=\min_{0\le i\le N(x)}\{ \rho(f^ix),1\}$.   Note that the integrability of $\log|f'(x)|$ (and thus $\log \rho(x)$) implies $\mu(\Sigma_f)=0,$ which in turns yields $D(x)>0$ for $\mu$-a.e. $x\in I.$

Now, for any small $\delta\in(0,1)$,  denote 
$A_{\delta}=\{x\in I: \rho(x)\le \delta\},$ and
for any $n\in \mathbb{N}$, define \begin{eqnarray*}
\Phi_n(x)=\prod_{0\le i<n: f^ix\in A_{\delta}}\,\,\rho(f^ix).
\end{eqnarray*}
Then $\delta\Phi_n(x)\le\rho^\delta(f^ix), 0\le i<n.$
By the Birkhoff ergodic theorem, for $\mu$-a.e. $x\in I,$
\begin{eqnarray}\label{Birkhoff}
\frac{\log\Phi_n(x)}{n}\to\int_{A_{\delta}} \log\rho(z) d\mu(z),\quad n\to \infty.
\end{eqnarray}

Now, for any $n\in\N$ let
\begin{eqnarray*}
\tilde r_n&=&D(x)L^{-N(x)} \delta \Phi_n(x)(1+\gamma)^{-n}e^{-n(\lambda(\mu)+\gamma)},\\
\hat r_n&=&(1-\gamma)^{-n}e^{-n(\lambda(\mu)-\gamma)}.
\end{eqnarray*}
Obviously, $\tilde r_n\to0, \hat r_n\to 0$ as $n\to\infty.$
\begin{claim}\label{claim:contain}
 For $\mu$-a.e. $x\in M$ and $n\ge N(x),$ 
\begin{eqnarray}\label{inclusion}
B(x,\tilde r_n)\subset B_n(x,\rho^{\delta})
\subset B(x,\hat r_n).
\end{eqnarray}
\end{claim}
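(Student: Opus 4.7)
The plan is to prove both inclusions by induction on the iteration index, exploiting that on each ball $B(f^i x, \rho(f^i x))$ the map $f$ is a local diffeomorphism with $|f'(z)| \in [(1-\gamma)|f'(f^i x)|,\,(1+\gamma)|f'(f^i x)|]$, so the Mean Value Theorem gives both sharp upper and sharp lower propagation of distances along the orbit.

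For the forward inclusion $B(x,\tilde r_n)\subset B_n(x,\rho^\delta)$, I would take $y\in B(x,\tilde r_n)$ and show inductively that
\[
|f^i y - f^i x|\le (1+\gamma)^i\prod_{j=0}^{i-1}|f'(f^j x)|\cdot\tilde r_n\le \rho^\delta(f^i x),\qquad 0\le i\le n-1.
\]
The induction hypothesis places $f^i y$ inside $B(f^i x,\rho(f^i x))$, so the Mean Value Theorem on this ball propagates the first inequality. To verify the second inequality, I would split into four sub-cases according to whether $i\le N(x)$ or $i>N(x)$, and whether $f^i x\in A_\delta$ or not. For $0\le i\le N(x)$ the crude bound $\prod_{j=0}^{i-1}|f'(f^j x)|\le L^{N(x)}$ is absorbed by the factor $L^{-N(x)}$ in $\tilde r_n$, reducing the target to $D(x)\delta$, which is $\le \min\{\rho(f^i x),\delta\}=\rho^\delta(f^i x)$ since $D(x)\le \rho(f^i x)$ and $D(x)\le 1$. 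For $N(x)<i\le n-1$, \eqref{Lya_ineq1} dominates the derivative product by $e^{n(\lambda(\mu)+\gamma)}$, whereupon the $(1+\gamma)^{-n}e^{-n(\lambda(\mu)+\gamma)}$ factor in $\tilde r_n$ cancels and the target becomes $\delta\Phi_n(x)\le \rho^\delta(f^i x)$; if $f^i x\in A_\delta$ the factor $\rho(f^i x)$ appears in $\Phi_n(x)$, so $\delta\Phi_n(x)\le \rho(f^i x)=\rho^\delta(f^i x)$ (the remaining factors being $\le 1$), while if $f^i x\notin A_\delta$ then $\Phi_n(x)\le 1$ gives $\delta\Phi_n(x)\le \delta=\rho^\delta(f^i x)$.

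For the reverse inclusion $B_n(x,\rho^\delta)\subset B(x,\hat r_n)$, any $y\in B_n(x,\rho^\delta)$ automatically satisfies $f^i y\in B(f^i x,\rho(f^i x))$ for $0\le i\le n-1$, so the Mean Value Theorem yields the lower bound $|f^{i+1}y-f^{i+1}x|\ge (1-\gamma)|f'(f^i x)|\cdot|f^i y-f^i x|$. Iterating up to the $(n{-}1)$-st step and using $|f^{n-1}y-f^{n-1}x|\le \rho^\delta(f^{n-1}x)\le \delta$ together with \eqref{Lya_ineq1} delivers the desired bound $|y-x|\le \hat r_n$, for $\delta$ chosen sufficiently small (harmless since ultimately $\delta\to 0$).

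The main obstacle is the bookkeeping inside the forward inclusion: each factor in the complicated definition of $\tilde r_n$ has been engineered to neutralize one particular piece of growth ($L^{-N(x)}$ versus the derivative product in the pre-Oseledets regime, $(1+\gamma)^{-n}e^{-n(\lambda(\mu)+\gamma)}$ versus \eqref{Lya_ineq1} in the long-time regime, and $\delta\Phi_n(x)$ versus $\rho^\delta(f^i x)$ according to whether $f^i x\in A_\delta$), and the induction closes only if the four sub-cases are matched correctly. Once this case analysis is carried out, the reverse inclusion is a short application of the same iterative Mean Value Theorem argument.
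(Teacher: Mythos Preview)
Your proposal is correct and follows essentially the same approach as the paper: an inductive Mean Value Theorem argument exploiting the distortion bound $(1\pm\gamma)|f'(f^ix)|$ on each $B(f^ix,\rho(f^ix))$. Your case analysis for the forward inclusion is in fact more explicit than the paper's, which merely asserts that the case $0\le i\le N(x)$ ``is not hard to see'' and handles $i>N(x)$ via the same cancellation you describe; for the reverse inclusion the paper phrases the estimate through the local inverses and invokes \eqref{Lya_ineq2}, whereas you use the equivalent lower bound on the forward derivative and \eqref{Lya_ineq1}, which is arguably cleaner and avoids a minor indexing ambiguity present in the paper's version.
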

We postpone  the proof of Claim \ref{claim:contain}  for the time being  and proceed to finish the proof of Theorem \ref{thm:dim_formula}.
By \eqref{inclusion} we have
\begin{eqnarray*}
\overline{\dim}(x)
= \limsup_{n\to \infty} \frac{\log\mu(B(x,\hat r_n))}{\log\hat r_n}\le   \limsup_{n\to \infty} \frac{\log\mu(B_n(x,\rho^{\delta}))}{-n} \frac{1}{\log(1-\gamma)+\lambda(\mu)-\gamma}. \end{eqnarray*}
Then Lemma \ref{lem:int BK},  together with  the arbitrariness of  $\delta$ and $ \gamma$,   yields
\begin{eqnarray*}
\overline{\dim}(x)\le   \frac{h_{\mu}(f,x)}{\lambda(\mu)}. \end{eqnarray*}
Since $\mu$ is ergodic, it follows from Propositions \ref{prop:young} and  \ref{prop:BK} that 
\begin{eqnarray}\label{upper}
\overline{\dim}(\mu)\le\dfrac{h_\mu(f)}{\lambda(\mu)}.
\end{eqnarray}
On the other hand, \eqref{inclusion} also yields 
\begin{eqnarray*}
\underline{\dim}(x)&=&
 \liminf_{n\to \infty} \frac{\log\mu(B(x,\tilde r_n))}{\tilde r_n}\\
&\ge& \liminf_{n\to \infty} \frac{\log\mu(B_n(x,\rho^{\delta}))}{-n} \frac{1}{\frac{\log\Phi_n(x)}{n}+\log(1+\gamma)+\lambda(\mu)+\gamma}\\
&=& \liminf_{n\to \infty} \frac{\log\mu(B_n(x,\rho^{\delta}))}{-n}\cdot\frac{1}{\int_{A_{\delta}} \log\rho(z) d\mu(z)+\lambda(\mu)+\gamma},
 \end{eqnarray*}
where the last equality is from \eqref{Birkhoff}.
Note that $\mu(A_{\delta}) \to 0$ as  $\delta\to 0.$ Thus, $\int_{A_{\delta}} \log\rho(z) d\mu(z)$
can be arbitrarily small by choosing  small $\delta\in(0,1)$.  Again, applying Lemma \ref{lem:int BK}, Propositions \ref{prop:young} and \ref{prop:BK}, together with  the arbitrariness of $\delta$ and $\gamma$,  we have
\begin{eqnarray}\label{lower}
\underline{\dim}(\mu)\ge   \frac{h_{\mu}(f)}{\lambda(\mu)}. \end{eqnarray}
Combining  \eqref{lower} and \eqref{upper}, the proof of Theorem \ref{thm:dim_formula} is concluded. 

\bigskip

\noindent{\it Proof of Claim \ref{claim:contain}:}
To prove  $\tilde{B}_n(x)\subset B_n(x,\rho^{\delta}),$ we only need to show
\begin{eqnarray}\label{induction}
f^i(\tilde{B}_n(x)) \subset B(f^ix,\rho^{\delta}(f^ix)), \quad0\le i< n.
\end{eqnarray} 
First, it is not hard to see that \eqref{induction} holds for $0\le i\le N(x).$
Now, assume that for $N(x)<i\le n,$ \eqref{induction} holds for $j=0,1,\cdots,i-1.$ Then for any $y\in\tilde B_n(x),$ by \eqref{Lya_ineq1} we have
\begin{eqnarray*}
\prod_{j=0}^{i-1}\big|f'(f^jy)\big|\le\Big(\prod_{j=0}^{i-1}\big|f'(f^jx)\big|\Big)(1+\gamma)^i\le e^{i(\lambda(\mu)+\gamma)}(1+\gamma)^i\le e^{n(\lambda(\mu)+\gamma)}(1+\gamma)^n.
\end{eqnarray*}
Thus, $$f^i(\tilde B_n(x))\subset B(f^ix,D(x)L^{-N(x)}\delta\Phi_n(x))\subset B(f^ix,\delta\Phi_n(x))\subset B(f^ix,\rho^\delta(f^ix)).$$  
Then the induction establishes \eqref{induction}.

To establish $B_n(x,\rho^{\delta})
\subset\hat{B}_n(x),$ note that 
for any  $y\in f^{n}(B_n(x,\rho^{\delta})),$
\[f^iy\in B(f^ix,\rho(f^ix)),\ 0\le i\le n,\]
and hence for $0\le i\le n$, 
\[\big|f^{-1}\mid_{f(B(f^{i}(x), \rho(f^ix))})'(f^{i+1}y)\big|\le(1-\gamma)^{-1} \big|f^{-1}\mid_{f(B(f^{i}(x), \rho(f^ix))})'(f^{i+1}x)\big|.\]
Then  \eqref{Lya_ineq2} yields
\begin{eqnarray*}
\prod_{0\le i< n}|(f^{-1}\mid_{f(B(f^{i}(x), \rho(f^ix))})'(f^{i+1}y)|
\le  (1-\gamma)^{-n} e^{-n(\lambda(\mu)-\gamma)}.
\end{eqnarray*}
By noting that $f^n|_{B_n(x,\rho^\delta)}$ is a local diffeomorphism, we have  
$$B_n(x,\rho^{\delta})\subset \hat{B}_n(x).$$
This completes the proof of Claim \ref{claim:contain}.
\hfill $\Box$

\begin{proof}[Proof of Theorem \ref{thm3}]  
	
First, note that the Lyapunov exponent $\lambda(\nu)=\int \log|f'(x)|d\nu(x)$ is continuous on $\cM_{\bar\eta,erg}(f)$.  Thus, suppose that $\{\mu_i\}\subseteq\mathcal M_{\bar \eta, erg}(f) $ and  $\mu_i$ converge to some hyperbolic $\mu\in\mathcal M_{\bar \eta, erg}(f)$.  We have that $\{\mu_i\}$ are hyperbolic for all $i$ large such that $\lambda(\mu_i)>0$ ({\it resp.} $<0$) if $\lambda(\mu)>0$ ({\it resp.} $<0$).  Assume $\lambda(\mu)<0.$ In this case,  every one of  $\mu$ and $\mu_i$ ($i\gg1$)   is  supported  at  some contracting periodic orbit, and hence the corresponding $\dim(\mu)$ and $\dim(\mu_i) $  are always zero, and Theorem \ref{thm3} is proved.  Now,  assume $\lambda(\mu)>0,$ and hence $\lambda(\mu_i)>0, i\gg1.$ By Theorem \ref{thm:dim_formula}, 
the dimension  formula \eqref{dim_formula} applies for all $\mu, \mu_i (i\gg1).$ 
Since the continuity of metric entropy (by Theorem \ref{thm2}) and the Lyapunov exponent holds on $\cM_{\bar\eta,erg}(f),$
we have $\limsup_{i\to\infty}\dim(\mu_i)\le\dim(\mu).$  Thus, the upper semi-continuity of $\dim(\cdot)$ on $\cM_{\bar\eta,erg}(f)$ is obtained. 
\end{proof}

\section{An example of interval maps}\label{example}

In this section, for each $r\in(1,\infty),$ we construct a $C^r$  interval map $f$ which admits  ergodic measures with positive entropy as  non upper semi-continuity points of the metric (or folding entropy).  
Before describing the example, we recall a quantitative proposition  about the (one-sided) shift that will be used in  our discussion. 

\begin{pro}[see Theorem 4.26 and its Remark in \cite{walters82}]\label{symbolic dynamics}
Consider the $k$-full shift $(\Sigma_k, \sigma)$. For any probability vector $(p_0,\cdots,p_{k-1}),$ the Markov measure associated with the $(p_0,\cdots,p_{k-1})$-shift is ergodic with the entropy equals $\sum\limits_{i=0}^{k-1}-p_i\log p_i.$ In particular, the metric entropy of $(\frac{1}{k},\cdots,\frac{1}{k})$-shift achieves the topological entropy of the full shift $(\Sigma_k, \sigma)$ which equals to $\log k.$
\end{pro}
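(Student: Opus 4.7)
The plan is to reduce to a direct computation on a convenient one-sided generator. First I would take the length-one cylinder partition $\xi = \{[0], [1], \ldots, [k-1]\}$, where $[i] = \{x \in \Sigma_k : x_0 = i\}$. The refinement $\bigvee_{j=0}^{n-1}\sigma^{-j}\xi$ is exactly the partition of $\Sigma_k$ into $n$-cylinders, and since these cylinders generate the Borel $\sigma$-algebra in the product topology, $\xi$ is a one-sided generator for $\sigma$. The Kolmogorov--Sinai theorem then gives $h_\mu(\sigma) = h_\mu(\sigma, \xi)$ for every $\sigma$-invariant Borel probability measure $\mu$, which reduces the entropy computation to a single limit.

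Next I would exploit the product (Bernoulli) structure of the $(p_0,\ldots,p_{k-1})$-Markov measure $\mu$: the $\mu$-measure of an $n$-cylinder $[i_0, i_1, \ldots, i_{n-1}]$ equals $p_{i_0}p_{i_1}\cdots p_{i_{n-1}}$. Using $\log(p_{i_0}\cdots p_{i_{n-1}}) = \sum_{j=0}^{n-1}\log p_{i_j}$ and marginalizing over the other coordinates, a routine calculation yields
\[ H_\mu\Bigl(\bigvee_{j=0}^{n-1}\sigma^{-j}\xi\Bigr) \;=\; n \sum_{i=0}^{k-1} -p_i \log p_i. \]
Dividing by $n$ and passing to the limit gives $h_\mu(\sigma) = \sum_{i=0}^{k-1} -p_i\log p_i$, as claimed.

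For ergodicity, I would establish the stronger property that $\mu$ is mixing (which implies ergodicity). It suffices to verify $\mu(A \cap \sigma^{-n}B) \to \mu(A)\mu(B)$ on an algebra generating the Borel $\sigma$-algebra, namely finite disjoint unions of cylinders. For cylinders $A,B$ of lengths $a,b$, once $n > a$ the coordinates determining $A$ and those determining $\sigma^{-n}B$ are disjoint, so by the product definition of $\mu$ they are independent; in fact $\mu(A \cap \sigma^{-n}B) = \mu(A)\mu(B)$ \emph{exactly} for all such $n$. A standard approximation (finite disjoint unions, then the Borel $\sigma$-algebra) upgrades this to full mixing.

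Finally, the ``in particular'' assertion follows by specializing $p_i = 1/k$: the entropy formula gives $k \cdot (1/k)\log k = \log k$. The topological entropy of $(\Sigma_k,\sigma)$ equals $\log k$ because the $k^n$ distinct $n$-cylinders form a maximal $(n,\epsilon)$-separated set for all sufficiently small $\epsilon$; alternatively, the universal bound $h_\nu(\sigma,\xi) \le \log \#\xi = \log k$ for every shift-invariant $\nu$ combined with the variational principle forces $h_{\mathrm{top}}(\sigma) = \log k$, and the uniform Bernoulli measure attains this supremum. I do not anticipate a genuine obstacle here; the only care point is checking that $\xi$ is a one-sided generator, which reduces to the fact that cylinder sets separate points of $\Sigma_k$.
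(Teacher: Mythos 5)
Your proof is correct and follows exactly the standard argument that the paper delegates to Walters (Theorem 4.26 and its Remark): take the length-one cylinder partition as a one-sided generator, use the Bernoulli product structure to compute the partition entropies, invoke Kolmogorov--Sinai, and get ergodicity from mixing on cylinders. The paper itself gives no independent proof of this proposition, so there is no divergence to report; only a cosmetic note that what the statement calls a ``Markov measure'' is more precisely the Bernoulli (i.i.d.) measure, which is the special case your computation exploits.
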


By Proposition \ref{symbolic dynamics},  given any full shift with topological entropy $h,$ any real number in $(0,h]$ can be achieved by an ergodic invariant measure with full support.

\setlength{\unitlength}{1.2mm}
 \begin{center}
 \begin{figure}[h]
 \begin{picture}(180,70)
\thicklines
\put(25,5){\line(1,0){63.5}}
\put(25,5){\line(0,1){63.5}}
\thinlines
\qbezier[150](25,5)(56.75,36.75)(88.5,68.5)
\qbezier(25,5)(27.2,4.8)(27.75,6)
\qbezier(27.75,6)(27.85,7)(28,8)
\qbezier(28,8)(29.25,28)(30.5,48)

\qbezier(30.5,48)(30.65,50)(30.8,52)
\qbezier(30.8,52)(34.025,83)(37.25,52)
\qbezier(37.25,52)(37.35,50)(37.45,48)

\qbezier(37.5,48)(38.75,28)(40,8)

\qbezier(40,8)(40.25,6)(41,5)
\qbezier(41,5)(41.3,5.1)(41.5,7)
\qbezier(41.5,7)(41.9,9.3)(42.5,9)

\qbezier[50](25,8)(32.5,8)(40,8)
\qbezier[110](25,9)(45,9)(66.5,9)

\qbezier(28,5)(28,6)(28,8)
\put(30.5,5){\line(0,1){43}}
\put(37.45,5){\line(0,1){43.5}}
\qbezier(40,5)(40,7.5)(40,8)

\qbezier[20](25,48)(28,48)(30.5,48)
\qbezier[15](25,26)(27,26)(29,26)

\put(24.3,47.5){\qihao{$\bullet$}}
\put(15,46){\qihao{$x_*+a\lambda$}}

\put(24.3,25.5){\qihao{$\bullet$}}
\put(15,24){\qihao{$x_*+\frac{a\lambda}{2}$}}

\qbezier[10](42.5,5)(42.5,7)(42.5,9)

\qbezier(43.5,9)(43.8,9.2)(44,10)
\qbezier(44,10)(44.2,10.5)(44.5,10)
\qbezier(44.5,10)(44.55,9.2)(45,9)

\qbezier(45,9)(45.4,9.1)(45.5,10)
\qbezier(45.5,10)(45.7,10.5)(46,10)
\qbezier(46,10)(46.05,9.2)(46.5,9)

\qbezier(47.5,9)(47.9,9.1)(48,10)
\qbezier(48,10)(48.2,10.5)(48.5,10)
\qbezier(48.5,10)(48.55,9.2)(49,9)

\qbezier[3](46.5,9.5)(46.7,9.5)(47.1,9.5)

\qbezier[2](42.7,9.3)(42.9,9.3)(43.1,9.3)

\qbezier(52.5,9)(53.1,9.4)(53.5,11)
\qbezier(53.5,11)(54.2,13)(54.9,11)
\qbezier(54.9,11)(55.3,9.2)(56,9)

\qbezier(56,9)(56.9,9.3)(57,11)
\qbezier(57,11)(57.5,13)(58.3,11)
\qbezier(58.3,11)(58.8,9.4)(59.5,9)

\qbezier(60.5,9)(61.1,9.4)(61.5,11)
\qbezier(61.5,11)(62.2,13)(62.9,11)
\qbezier(62.9,11)(63.3,9.1)(64,9)

\qbezier[5](59,10)(59.5,10)(60.5,10)

\put(28,3.5){\line(0,1){1}}
\put(30.5,3.5){\line(0,1){1}}
\put(28,4){\line(1,0){0.5}}
\put(30,4){\line(1,0){0.5}}
\put(28.3,3.2){\qihao$I_1$}

\put(37.45,4){\line(1,0){0.5}}
\put(39.5,4){\line(1,0){0.5}}
\put(37.45,3.5){\line(0,1){1}}
\put(40,3.5){\line(0,1){1}}
\put(37.8,3.2){\qihao$I_2$}

\put(43.5,5){\line(0,1){4}}
\put(49,5){\line(0,1){4}}
\put(43.5,3.5){\line(0,1){1}}
\put(49,3.5){\line(0,1){1}}
\put(43.5,4){\line(1,0){1}}
\put(48,4){\line(1,0){1}}
\put(45,3.2){\qihao$J_k$}

\put(52.5,5){\line(0,1){4}}
\put(64,5){\line(0,1){4}}
\put(52.5,3.5){\line(0,1){1}}
\put(64,3.5){\line(0,1){1}}
\put(52.5,4){\line(1,0){2}}
\put(62,4){\line(1,0){2}}
\put(57.5,3.2){\qihao$J_1$}

\qbezier(64,9)(66,9)(67,9.3)
\qbezier(67,9.3)(69,10)(70.8,13)
\qbezier(70.8,13)(85,41)(88.6,68.5)

\qbezier[120](25,68.5)(50,68.5)(88.5,68.5)
\qbezier[120](88.5,5)(88.5,35)(88.5,68.5)

\put(24.4,8.65){\qihao$\bullet$}
\put(21.5,9.75){\qihao$x_0$}

\put(41.9,4.5){\qihao$\bullet$}
\put(41,3.2){{\qihao$z_0$}}

\put(27.3,4.5){\qihao$\bullet$}
\put(27,1){\qihao$x_*$}

\put(39.3,4.5){\qihao$\bullet$}
\put(38.7,1){\qihao$x_*^\p$}

\put(23.5,9){\line(1,0){1}}
\put(23.5,5){\line(1,0){1}}
\put(24,5){\line(0,1){1}}
\put(24,8){\line(0,1){1}}
\put(22.5,7){\qihao$I_0$}

\put(90,3){$I$}
\put(22,68){$I$}
\end{picture}
\caption{Accumulation of small horseshoes}\label{fig:accumulation of small horseshoes} 
\end{figure}
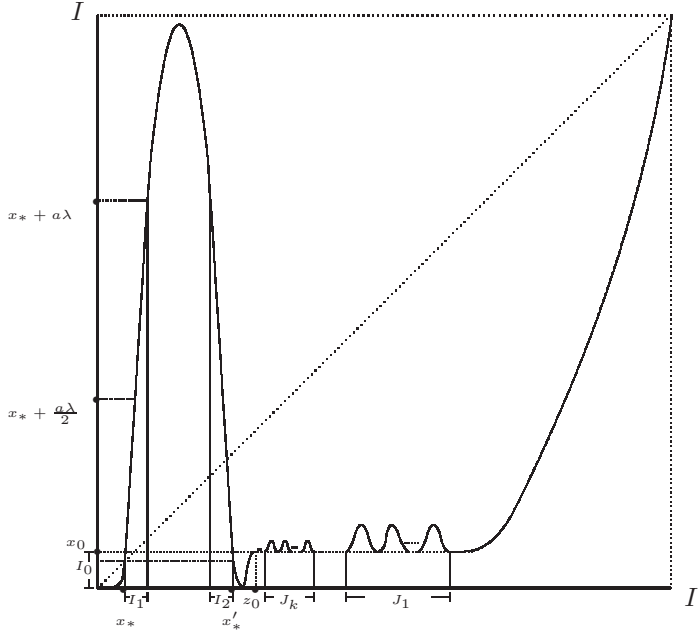
\end{center}

\subsection{Example description}
In this subsection, we describe in detail how the interval map $f: I\rightarrow I$ is constructed on  $I=[0,1]$; see Figure \ref{fig:accumulation of small horseshoes} for its qualitative depiction.
In Figure \ref{fig:accumulation of small horseshoes}, $I_1$ and $I_2$ are two subintervals of $I$ on which $f$ acts linearly with slope $\lambda$ such that
$$f(I_i)\supseteq I_1\cup I_2, \quad i=1,2.$$ Let $$\Lambda=\bigcap\limits_{i=0}^{\infty}f^{-i}(I_1\cup I_2).$$ Then $(f,\Lambda)$ is uniformly expanding  and conjugate to a (one-sided) full shift of two symbols.  Thus, the topological entropy $h_{\top}(f|_{\Lambda})=\log2$.

In the following, for any subinterval $J\subset I$, by $|J|$ we mean the length of $J$.  Any $y\in I$ stands for either a point in $I$ or a real number when  $I$ is considered as a subinterval of $\mathbb{R}$.
 Let $x_{*}$ and  $x_*^\p$ be the left and right end point of $I_1$ and $I_2$,  respectively. We further require that $x_*$  is a fixed  point of $f$ such that  $f(x_*^\p)=f(x_*)=x_*$.
For simplicity, set $a=x_*=|I_1|=|I_2|$ and $d_{H}(I_1, I_2)= 3a$.  Also, let  $\lambda$ be large enough so that  \[I_1\cup I_2 \subset  [x_*, x_*+\dfrac{a\lambda}{2}].\]
Since $(f,\Lambda)$ conjugates to a one-sided 2-full shift, by Proposition \ref{symbolic dynamics}, for any $c\in (0, c_0)$ where $c_0=\min\{\log 2, \frac{1}{r}\log \lambda\},$ we can find  $\mu\in\cM_{erg}(f)$ supported on $\Lambda$ such that  
\begin{eqnarray}\label{h_mu}
h_\mu(f)=c<\frac{1}{r}\log\lambda.
\end{eqnarray}
 Take $\delta_0=a/(2\lambda).$  Observing that $x_*\in \Lambda$, we can choose  a generic point $x_0\in \supp\mu\cap [x_*, \delta_0/2]$ of $\mu$ in the sense that  $$\dfrac{1}{n}\sum\limits_{i=0}^{n-1}\delta_{f^i(x_0)}\rightarrow\mu, \quad  \text{as}\,\,n\rightarrow\infty,$$
in the weak*-topology. 
As denoted in Figure \ref{fig:accumulation of small horseshoes}, arrange $z_0$ to be a preimage of $x_0$ lying on the right side of $x_*'$ such that 
$|x_*'-z_0|=a.$
Also, we can require that $f$ remains linear with slope $\lambda$ in the $\delta_0$-neighborhood of $I_1\cup I_2$
since $d_{H}(I_1, I_2)= 3a$.
	
\begin{lemma}\label{small cover} Let $\delta_1=x_0-x_*$. Then there exist $N_1, N_2\in \mathbb{N}$ such that
	$$f^{N_1}([x_0-\delta_1, x_0]) \supset [z_0-a, z_0+3a],\quad  f^{N_2}([x_0, x_0+\delta_1]) \supset [z_0-a, z_0+3a].	$$
\end{lemma}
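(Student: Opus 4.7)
The plan is to show that, for each of the two small intervals $J$ in the statement, there is an iterate $f^k(J)$ that contains $I_1$ (or equivalently $I_2$); one more application of $f$ then gives $f^{k+1}(J)\supseteq f(I_1)=[x_*,x_*+\lambda a]$, with the same image arising from $I_2$ since on both branches $f$ is linear of slope $\pm\lambda$ sending the chosen endpoint to $x_*$. The standing arrangement $I_1\cup I_2\subseteq [x_*,x_*+\lambda a/2]$ together with $d_H(I_1,I_2)=3a$ forces $x_*'-x_*=4a$ and $\lambda\geq 8$, whence $[x_*,x_*+\lambda a]\supseteq [x_*',x_*'+4a]=[z_0-a,z_0+3a]$ (using $z_0=x_*'+a$). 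Both intervals $[x_0-\delta_1,x_0]=[x_*,x_0]$ and $[x_0,x_0+\delta_1]$ lie inside $I_1$, since $x_0\in [x_*,x_*+\delta_0/2]$ with $\delta_0=a/(2\lambda)$ yields $\delta_1\le \delta_0/2$ and $2x_0-x_*<x_*+a$.

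For $[x_*,x_0]$ I would iterate directly, exploiting the linearity of $f|_{I_1}$ and the fixed point $f(x_*)=x_*$. Induction gives $f^n([x_*,x_0])=[x_*,x_*+\lambda^n\delta_1]$ for every $n$ such that $\lambda^{n-1}\delta_1\le a$ (which is exactly the condition that the $(n-1)$st iterate still lies in $I_1$, preserving the linear formula). Taking $N_1$ to be the smallest $n$ with $\lambda^{n-1}\delta_1\ge a$ one obtains $f^{N_1-1}([x_*,x_0])\supseteq I_1$, and therefore $f^{N_1}([x_*,x_0])\supseteq f(I_1)\supseteq [z_0-a,z_0+3a]$.

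For $[x_0,x_0+\delta_1]$ this direct argument fails, because the iterates are translated to the right and never cover the fixed point $x_*$. Here I would invoke the horseshoe structure: since $(f|_{\Lambda},\Lambda)$ is topologically conjugate to the full $2$-shift, $\Lambda$ is a perfect Cantor set, so the open interval $(x_0,x_0+\delta_1)$ contains some $y\in \Lambda\setminus\{x_0\}$. Let $C_n(y)$ denote the unique length-$n$ Markov cylinder containing $y$; by the slope-$\lambda$ expansion $|C_n(y)|=a/\lambda^{n-1}$, so $C_n(y)\subseteq (x_0,x_0+\delta_1)$ for all $n$ sufficiently large. The Markov property then gives $f^{n-1}(C_n(y))=I_s$ for the appropriate symbol $s\in\{1,2\}$ (the $(n-1)$st symbol of $y$'s itinerary), and hence $f^n(C_n(y))=f(I_s)=[x_*,x_*+\lambda a]\supseteq [z_0-a,z_0+3a]$. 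Setting $N_2=n$ completes this case.

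The principal subtlety is the second interval: once its iterates escape $I_1$ they enter the region between $I_1$ and $I_2$ where the detailed form of $f$ has not been specified, so the clean linear formula is lost and naive iteration is inconclusive. The cylinder argument avoids this obstacle by carving out a short dynamical piece inside the interval whose entire length-$n$ forward orbit is prescribed by the symbolic coding, so that only the fully controlled linear dynamics on $I_1\cup I_2$ is actually invoked.
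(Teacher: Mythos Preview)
Your argument for $N_1$ is the paper's argument. For $N_2$ you take a genuinely different route: the paper tracks the orbit of $x_0$ itself, using genericity to find a return time $t$ with $f^t(x_0)$ near $x_*$, and then argues that $f^t([x_0,x_0+\delta_1])$ always contains a $\delta_0$-interval with $f^t(x_0)$ as an endpoint, so that the $N_1$-style expansion can be re-applied from there. Your cylinder argument is shorter and more structural: it avoids tracking how the iterates of the full interval behave in the unspecified region between $I_1$ and $I_2$, at the cost of needing a point of $\Lambda$ strictly to the right of $x_0$.

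That last point is where your justification has a gap. Perfectness of $\Lambda$ does \emph{not} imply that $(x_0,x_0+\delta_1)\cap\Lambda\neq\emptyset$: a point of a perfect Cantor set can be a one-sided accumulation point (e.g.\ the left endpoint of a complementary gap). What actually rules this out here is that every gap endpoint of $\Lambda$ is eventually mapped by $f$ to one of the level-$0$ endpoints $x_*,x_*'\in\Lambda$, and since $f(x_*')=x_*=f(x_*)$, every gap endpoint is pre-fixed. A generic point for the positive-entropy measure $\mu$ cannot be pre-periodic (its empirical measures would converge to $\delta_{x_*}\neq\mu$), so $x_0$ is a two-sided accumulation point of $\Lambda$ and your $y$ exists. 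Once this is said, the cylinder argument goes through as you wrote it.
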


\begin{proof}
Since $x_*$ is a fixed point, the expanding property of $f$ on $I_1$ gives certain $N_1\in \mathbb{N}$ such that  $$f^{N_1}([x_0-\delta_1, x_0])\supset  f(I_1)\supset  [z_0-a, z_0+3a].$$
Then by the continuity of $f,$ there exists  $\delta\in (0,\delta_1)$ such that $$f^{N_1}([x_0-\delta_1+\delta, x_0])\supset  [z_0-a, z_0+3a].$$
Recall that $x_0$ is a generic point of $\mu$.  Since  $\mu([x_0-\delta_1, x_0-\delta_1+\delta])>0$, there exists $t\in \mathbb{N}$ such that $$f^{t}(x_0)\in [x_0-\delta_1, x_0-\delta_1+\delta] .$$

Now, consider the $f$-iterates of $[x_0, x_0+\delta_1]$ successively cut by the interval  $[x_*-\delta_0, x_*'+\delta_0]$. Then there exists $t_0>0$ such that for each $i>t_0,$ $f^i([x_0,x_0+\delta_1])$ contains an interval $R_i$ of length $\delta_0$ with $f^i(x_0)$ being one of its end points. In particular, let $t>t_0.$  
Now, we proceed  according to the cases where $f^t(x_0)$ is the left  and right end point of
$R_t,$ respectively.
If $f^t(x_0)$ is the left end point, we have $R_t\supset [x_0-\delta_1+\delta, x_0],$ and hence $$f^{t+N_1}([x_0, x_0+\delta_1])\supset f^{N_1}(R_t)\supset [z_0-a, z_0+3a].$$
Then  $N_2=t+N_1$ is as desired;
If  $f^t(x_0)$	is the right end point of $R_t$, then $$R_t\supset [x_0-\delta_1, f^tx_0].$$ 	
Once more, using the expanding property of $f$ on $I_1$, there exists $t'\in \mathbb{N}$, such that
\[f^{t'}([x_0-\delta_1, f^tx_0])\supset  [z_0-a, z_0+3a].\]
In this case, let $N_2=t+t'$. So the proof is finished.
\end{proof}
By Lemma \ref{small cover} and the continuity of $f$, there exists $\eta>0$ such that for any $x\in [x_0-\eta, x_0+\eta]$,
\begin{eqnarray}\label{interval_contain}
f^{N_1}([x-\delta_1, x]) \supset [z_0, z_0+2a],\quad  f^{N_2}([x, x+\delta_1]) \supset [z_0, z_0+2a].	
\end{eqnarray}
Since $x_0$ is a generic point  and $\mu([x_0-\eta, x_0+\eta])>0$, there exist $n_1<n_2<\cdots<n_k<\cdots$ such that $f^{n_k}(x_0)\in [x_0-\eta, x_0+\eta] $. 	
Now, let $\{J_n\}_{n\geq1}$ be a sequence of disjoint subintervals of $I$  accumulating to $z_0$ from the right with the following:
\begin{itemize}
\item[(i)] On each interval $J_k,$ $f|_{J_k}=A_k^r\cos\omega_k(x-c_k)+(x_0+A_k^r),$ where $\{c_k\}_{k\geq1}$ is a sequence of real numbers decreasing to  $z_0,$ such that
\begin{eqnarray*}
A_k=\big{(}\dfrac{\delta_0}{2}\lambda^{-n_k}\big{)}^{\frac{1}{r}},\quad  \omega_k=\dfrac{L}{A_k},
\end{eqnarray*}
where $L\ge \lambda$ is chosen to satisfy  $\max_{ 1\le i\le r}\sup\nolimits_{x\in I}|f^{(i)}(x)|\le L^r$;

\item[(ii)] On each interval $J_k,$ $f$ oscillates $M_k$ times, 
where $$M_k=\frac{ L\gamma_0}{2\pi  k^2}\big{(}\frac{2}{\delta_0}\lambda^{n_k}\big{)}^{\frac{1}{r}}$$
and $\gamma_0$ is a small real number.
\end{itemize}
Obviously, $x_0\in f(J_{k})$.
Also, by (i) and (ii), we see that
$|J_k|=\dfrac{2\pi M_k}{\omega_k}=\dfrac{\gamma_0}{k^2}.$ Thus, by choosing a small $\gamma_0>0$, we can have
\[\sum\limits_{k=1}^{\infty}|J_k|=\gamma_0\sum\limits_{k=1}^{\infty}\frac{1}{k^2}<a.\]

Make \[\bigcup_{k\ge 1 } J_k \subset [z_0, z_0+2a].\]
Outside the intervals $\{I_i\}_{i=1,2}$ and $\{J_k\}_{k\geq1},$ we extend $f$ in a smooth way as depicted in Figure \ref{fig:accumulation of small horseshoes}.

\subsection{Analysis of Example}\label{property of f}
In this subsection, we show that $\mu$ is approximated by the ergodic measures supported on a sequence of horseshoes with topological entropy having uniform gap  from the metric entropy of $\mu$. Hence,  the  metric entropy is not upper semi-continuous at $\mu$.

For an interval map $g$ and  integer $\ell\geq2,$  by an $\ell$-horseshoe of $g$ we mean a family of  disjoint closed intervals $(K_1,\cdots,K_\ell)$ such that \[g(K_i)\supseteq K_j, \quad \forall\, i, \,j\in\{1, \cdots,\ell\}.\] 
For an interval $K,$ we say $g$ admits an $\ell$-horseshoe on $K$ if $\cup_{1\le i\le \ell} K_i\subset K.$ 
Recall that an $\ell$-horseshoe is conjugated to the one-sided shift of $\ell$ symbols.

\begin{lemma}\label{creation of horsehsoes}
For each $k\geq1,$ either $f^{n_k+N_1+1}(J_k)\supseteq J_k$, or $f^{n_k+N_2+1}(J_k)\supseteq J_k$.
\end{lemma}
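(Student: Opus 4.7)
The plan is to trace an interval of size $\delta_0\lambda^{-n_k}$ out of $f(J_k)$ and expand it linearly by $f^{n_k}$ using the orbit of $x_0$, producing an interval of length $\delta_0$ near $x_0$, to which \eqref{interval_contain} applies.

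First I would compute $f(J_k)$ directly. By construction, $f|_{J_k}(x)=A_k^r\cos\omega_k(x-c_k)+(x_0+A_k^r)$, so as the cosine sweeps $[-1,1]$ its image is
\[
f(J_k)=[x_0,\,x_0+2A_k^r]=[x_0,\,x_0+\delta_0\lambda^{-n_k}],
\]
using $A_k^r=\delta_0\lambda^{-n_k}/2$. The key technical step is then to verify that $f^{n_k}$ acts as a linear map with slope $\lambda^{n_k}$ on this whole interval. Since $x_0\in\supp\mu\subseteq\Lambda$, its forward orbit stays in $I_1\cup I_2$, where $f$ has slope $\lambda$; and by the construction, $f$ is still linear with slope $\lambda$ throughout the $\delta_0$-neighborhood of $I_1\cup I_2$. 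I would argue by induction on $0\le i\le n_k$ that every $y\in[x_0,x_0+\delta_0\lambda^{-n_k}]$ satisfies
\[
|f^i(y)-f^i(x_0)|=\lambda^i|y-x_0|\le\delta_0\lambda^{i-n_k}\le\delta_0,
\]
which keeps $f^i(y)$ in the linear regime and propagates the identity. Consequently,
\[
f^{n_k}\bigl([x_0,\,x_0+\delta_0\lambda^{-n_k}]\bigr)=[x,\,x+\delta_0],\qquad x:=f^{n_k}(x_0).
\]

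Next, by the genericity choice of the return times $\{n_k\}$ we have $x\in[x_0-\eta,x_0+\eta]$, and since $\delta_1\le\delta_0$ the interval $[x,x+\delta_0]$ contains both $[x,x+\delta_1]$ and $[x+\delta_0-\delta_1,x+\delta_0]$. Depending on whether we pick the left or right endpoint of such a sub-interval (one of these choices will have the relevant endpoint still inside $[x_0-\eta,x_0+\eta]$), \eqref{interval_contain} yields, for some $i\in\{1,2\}$,
\[
f^{N_i}\bigl([x,x+\delta_0]\bigr)\supseteq f^{N_i}\bigl([\,\cdot\,,\,\cdot\,+\delta_1]\bigr)\supseteq[z_0,\,z_0+2a].
\]
Combining with $J_k\subseteq\bigcup_j J_j\subseteq[z_0,z_0+2a]$ and the computation of $f(J_k)$ and $f^{n_k}$ above, we conclude
\[
f^{n_k+N_i+1}(J_k)=f^{N_i}\circ f^{n_k}\bigl(f(J_k)\bigr)\supseteq[z_0,z_0+2a]\supseteq J_k.
\]

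I expect the main obstacle to be the inductive linearization step: one must genuinely use the design requirement that $f$ extends linearly (slope $\lambda$) into a full $\delta_0$-neighborhood of $I_1\cup I_2$, otherwise iterates of $x_0+\delta_0\lambda^{-n_k}$ that momentarily leave $I_1\cup I_2$ could see a different slope and destroy the clean equality $f^{n_k}([x_0,x_0+\delta_0\lambda^{-n_k}])=[x,x+\delta_0]$. The remaining work is bookkeeping: checking that $\delta_1<\delta_0$, that $x\in[x_0-\eta,x_0+\eta]$, and that one of the two alternatives in \eqref{interval_contain} is indeed available, which produces the dichotomy in the statement.
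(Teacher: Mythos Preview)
Your approach is essentially the paper's: compute $f(J_k)=[x_0,x_0+2A_k^r]$, iterate linearly $n_k$ times to get an interval of length $\delta_0$ containing $x:=f^{n_k}(x_0)\in[x_0-\eta,x_0+\eta]$, then invoke \eqref{interval_contain}. The inductive linearization step you flag is exactly the right concern and your handling of it is fine.

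There is, however, a genuine slip in your identification of the dichotomy. You assert $f^{n_k}\bigl([x_0,x_0+\delta_0\lambda^{-n_k}]\bigr)=[x,x+\delta_0]$, but this presumes the slope of $f^{n_k}$ on that interval is $+\lambda^{n_k}$. In the construction $f$ is orientation-reversing on $I_2$ (since $f(x_*')=x_*$ with $x_*'$ the \emph{right} endpoint of $I_2$), so ``slope $\lambda$'' means $|f'|=\lambda$. Depending on the parity of visits to $I_2$ along the orbit segment $x_0,\ldots,f^{n_k-1}(x_0)$, the image $f^{n_k+1}(J_k)$ is either $[x,x+\delta_0]$ or $[x-\delta_0,x]$. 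In the first case one applies the $N_2$-half of \eqref{interval_contain} to $[x,x+\delta_1]$; in the second case the $N_1$-half to $[x-\delta_1,x]$. That is the true source of the $N_1/N_2$ alternative. Your proposed explanation (``one of these choices will have the relevant endpoint still inside $[x_0-\eta,x_0+\eta]$'') does not work: $x$ itself is always in $[x_0-\eta,x_0+\eta]$ by the choice of $n_k$, so if the image were really $[x,x+\delta_0]$ the $N_2$ case would always suffice and no dichotomy would arise. Once you correct this orientation issue, your proof coincides with the paper's.
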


\begin{proof}For each $k\geq1$,  since $x_0\in f(J_k),$ we have $f^i(x_0)\in f^{i+1}(J_k)$ for any $i\geq1$. Note that for $0\le i\le n_k$,  $$|f^{i+1}(J_k)|=\lambda^i |f(J_k)|=\lambda^i \cdot 2A_k^r\le \delta_0$$  and $|f^{n_k+1}(J_k)|= \delta_0$. Moreover,  $f^{n_k}(x_0)\in [x_0-\eta, x_0+\eta] $, $\delta_1<\delta_0$. By \eqref{interval_contain} we have
\[f^{N_1}(f^{n_k+1}(J_k))\supset J_k\quad \text{or}\quad f^{N_2}(f^{n_k+1}(J_k))\supset J_k.\]
\end{proof}

In this following, we only consider the case $f^{n_k+N_1+1}(J_k)\supseteq J_k$ since the other one is similar. Then  $f^{n_k+N_1+1}|_{J_k}$ admits  a $2M_k$-horseshoe. Let $$\Lambda_k=\bigcup\limits_{j=0}^{n_k+N_1}f^{j}\Big{(}\bigcap\limits_{i\geq0}f^{-(n_k+N_1+1)i}(J_k)\Big{)}.$$ By Proposition \ref{symbolic dynamics}, for each $k,$ there exists an ergodic measures $\nu_k$ supported on $\Lambda_k$ such that
\begin{eqnarray*}
h_{\nu_k}(f)=h_{\top}(f|_{\Lambda_k})=\dfrac{\log (2M_k)}{n_k+N_1+1}.
\end{eqnarray*}
Thus,
\begin{eqnarray}\label{entropy sequence}
h_{\nu_k}(f)\to\frac{1}{r}{\log\lambda},\quad \quad k\rightarrow\infty.
\end{eqnarray}

Now, we show that $\nu_k\to \mu$ as $k\to \infty$.  Given any continuous functions $\varphi_1, \cdots \varphi_s$ on $X$, for any $\vep>0$, there exists $\gamma>0$ such that for any  $x,y\in I$ satisfying $|x-y|<\gamma$,
\[|\varphi_i(x)-\varphi_i(y)|<\frac{\vep}{2},\quad i=1,\cdots,s.\]  Let 
\[t_k=\ln({\gamma}/{2A_k^r})/\ln \lambda=\frac{\ln\gamma-\ln \delta_0}{\ln \lambda}+n_k.\] Then $$\lim_{k\to \infty}\frac{t_k}{n_k}=1\quad \text{and}\quad |f^{j+1}(J_k)|\le \gamma,\quad \forall\,0\le j\le t_k,$$
which implies by letting $k$ large that 
\begin{eqnarray*}
\lim_{k\to \infty}\Big{|}\int \varphi_i(x)d\nu_k(x)-\frac{1}{n_k}{\sum_{0\le j\le n_k-1} \varphi_i(f^j(x_0))}\Big{|}\le \vep,\quad i=1,\cdots,s.
\end{eqnarray*}
 Since $(1/n_k)\sum_{0\le j\le n_k-1}\delta_{f^jx_0} \to \mu$ as $k\to \infty$, together with the arbitrariness of $\varphi_1,\cdots, \varphi_s$ and $\vep,$ we obtain the convergence of $\{\nu_k\}$ to $\mu$.

By \eqref{h_mu} and \eqref{entropy sequence},
\begin{eqnarray*}
\lim_{k\to\infty}h_{\nu_k}(f)=\frac{1}{r}\log\lambda>h_\mu(f),
\end{eqnarray*}
Therefore,  the metric entropy and hence the folding entropy, is not upper semi-continuous at $\mu.$  
In the following, we  show that  
$\{\nu_n\}$ does not admit  uniform degenerate  rate. This demonstrate that the condition of  uniform degenerate  rate in Theorem \ref{thm1} is sharp.

Recall that $\Sigma_f=\{x\in I: f'(x)=0\}.$ Note that in our example, $z_0\in\Sigma_f.$
Given a decreasing sequence of neighborhoods $\mathcal V=\{V_m\}_{m\ge1}$ of $\Sigma_f.$
For any fixed $m\ge1,$
we have $J_k\subseteq V_m$ for all $k$ sufficiently large.   Let $y_k\in J_k$ be a generic point of $\nu_k,$ i.e.,  $\dfrac{1}{t}\sum\limits_{i=0}^{t-1}\delta_{f^ty_k}\rightarrow\nu_k$, $t\rightarrow\infty.$
Note that   $$|f'(x)|\le A_k^{r}\omega_k,\  \forall x\in J_k.$$
Then for any fixed $V_m$  and all large $k,$  we have
\begin{eqnarray*}
\displaystyle\int_{V_m}\log|f'(x)|d\nu_k&\le&\dfrac{1}{n_k+N_1+1}\log(A_k^{r}\omega_k)\\
&=&\dfrac{1}{n_k+N_1+1}\log(\frac{\delta_0}{2}\lambda^{-n_k})^{\frac{r-1}{r}}\\
&=&\dfrac{\frac{r-1}{r}\log(\frac{\delta_0}{2})}{n_k+N_1+1}+\dfrac{\frac{r-1}{r}n_k\log\lambda^{-1}}{n_k+N_1+1},
\end{eqnarray*}
which, as $ k\rightarrow\infty,$ goes to $\frac{r-1}{r}\log\lambda^{-1}.$ Thus, for any $m\ge1,$
\[\Big|\displaystyle\int_{V_m}\log|f'(x)|d\nu_k\Big|>
\dfrac{r-1}{2r}\log\lambda,\quad {\text{for all}}\ k\ {\text{sufficiently large}}.\]
That is,  the sequence $\{\nu_k\}_{k\ge1}$ does not have uniform degenerate rate. This implies that the  uniform degenerate rate condition  in Theorem \ref{thm1} and Theorem \ref{thm2} is sharp.

\section*{Acknowledgement}
G. Liao was partially supported by NSFC (11701402,  11790274),  BK 20170327 and IEPJ. S. Wang was partially supported by NSFC (11771026, 11471344) and acknowledges the PIMS-CANSSI postdoctoral  fellowship.

 \bibliography{myref}
 \bibliographystyle{amsplain}

\end{document}